\tikzset{
	vertex/.style = {
		circle,
		fill            = black,
		outer sep = 2pt,
		inner sep = 1pt,
	}
}
\newcommand*\R{\textrm{I\kern-0.21emR}}
\newcommand*\RR{\mathbb{R}}
\newcommand*\N{\textrm{I\kern-0.21emN}}
\newcommand*\NN{\mathbb{N}}
\newcommand*\suppdata[1][\data]{\operatorname{supp}(#1)}
\newcommand {\MM} { {\mathcal{M}} }
\newcommand*\imcone{A(\MM_+)}
\newcommand*\one{\boldsymbol{1}}
\newcommand*\iz{i_0}
\newcommand*\loss{\ell}
\newcommand*\ndet{m}
\renewcommand{\geq}{\geqslant}
\renewcommand{\leq}{\leqslant}
\renewcommand{\geq}{\geqslant}
\renewcommand{\leq}{\leqslant}
\newcommand\numberthis{\addtocounter{equation}{1}\tag{\theequation}}
\newcommand {\Chi} {{\bf \raise 2pt \hbox{$\chi$}} }
\newcommand*\Cont{\mathcal{C}}
\newcommand{\scl}[2]{\pairing*{#1}{#2}}
\newcommand {\PP} { {\mathbb P} }
\newcommand*\dd{\mathrm{d}}
\newcommand{\data}{y}
\newcommand{\beq}{\begin{equation}}
\newcommand{\eeq}{\end{equation}}
\newcommand{\bea} {\begin{array}{rl}}
\newcommand{\eea} {\end{array}}
\newcommand{\bepa}{\left\{ \begin{array}{l}}
\newcommand{\eepa} {\end{array}\right.}
\newcommand{\bmu}{\begin{multline}}
\newcommand{\emu}{\end{multline}}
\DeclareMathOperator*{\argmin}{arg\,min}
\DeclareMathOperator*{\argmax}{arg\,max}
\title{The ML-EM algorithm in continuum: \\sparse measure solutions}
\date{\today}
\author{Camille Pouchol}
\address{Department of Mathematics, KTH Royal Institute of Technology, 100 44 Stockholm, Sweden.}
\email{pouchol@kth.se}
\author{Olivier Verdier}
\address{Department of Mathematics, KTH Royal Institute of Technology, 100 44 Stockholm, Sweden.}
\email{olivierv@kth.se}
\address{Department of Computing, Electrical Engineering and Mathematical Sciences, Western Norway University of Applied Sciences, Bergen, Norway.}
\email{olivier.verdier@hvl.no}
\begin{document}

\newcounter{assum}

\maketitle
\begin{abstract}
  Linear inverse problems $A \mu = \data$ with Poisson noise and non-negative unknown $\mu \geq 0$ are ubiquitous in applications, for instance in Positron Emission Tomography (PET) in medical imaging.
  The associated maximum likelihood problem is routinely solved using an expectation-maximisation algorithm (ML-EM).
  This typically results in images which look spiky, even with early stopping.
  We give an explanation for this phenomenon.
  We first regard the image $\mu$ as a measure.
  We prove that if the measurements $\data$ are not in the cone $\{A \mu, \mu \geq 0\}$, which is typical of low injected dose, likelihood maximisers must be sparse, i.e., typically a sum of point masses. 
We also show a weak sparsity result for cluster points of ML-EM.  On the other hand, in the low noise regime, we prove that cluster points of ML-EM are optimal measures with full support.
  Finally, we provide concentration bounds for the probability to be in the sparse case, and a set of numerical experiments supporting our claims.

\end{abstract}

\section{Introduction}
\label{sec_intro}
In various imaging modalities, recovering the image from acquired data can be recast as solving an inverse problem of the form
$A \mu = \data$, where $A$ is a linear operator, $\data$ represents noisy measurements and $\mu$ the image, with $\mu \geq 0$ usually a desirable property. 
The problem thus becomes $\min_{\mu \geq 0} d(\data, A \mu)$ where $d$ is some given distance or divergence.

When the model is finite-dimensional, the operator $A$ is simply a matrix $A = (a_{ij}) \in \mathbb{R}^{\ndet \times r}$.
If we assume a Poisson noise model, i.e., $\data_i \sim \mathcal{P}((A \mu)_i)$ with independent draws, 
the corresponding (negative log) likelihood problem
is equivalent to
\begin{equation}
  \label{discrete_likelihood}
  \min_{\mu \geq 0} \quad
  d(\data || A \mu)
,
\end{equation}
where $d$ is the Kullback--Leibler divergence.
As it turns out, this statistical model
is similar to the familiar
non-negative least-squares regression corresponding to Gaussian noise, but for a different distance functional: if $\data \notin \{A \mu, \mu \geq 0\}$, it is projected onto it in the sense of of the divergence $d$, whereas if it belongs to this image set, any $\mu \geq 0$ such that $A \mu = \data$ will be optimal.
  
The celebrated Maximum Likelihood Expectation Maximisation algorithm (ML-EM) precisely aims at solving~\eqref{discrete_likelihood} and was introduced by Shepp and Vardi~\cite{Shepp1982, Vardi1985}, in the particular context of the imaging modality called Positron Emission Tomography (PET). It was proposed earlier in another context and is as such often called the Richardson--Lucy algorithm~\cite{Richardson1972, Lucy1974}. 

\noindent
The ML-EM algorithm is iterative and writes
\beq
\label{discrete_mlem}
\mu_{k+1} = \frac{\mu_{k}}{A^T 1}\,  A^T\bigg(\frac{\data}{A \mu_k}\bigg),
\eeq
starting from $\mu_0 > 0$, usually $\mu_0 = 1$.

This algorithm is an expectation-maximisation (EM) algorithm, and as such it has many desirable properties: it preserves non-negativity and the negative log-likelihood decreases along iterates~\cite{Dempster1977}. It can also be interpreted in several other ways~\cite{Vardi1985, Csiszar1984, Benvenuto2014}, see~\cite{Natterer2001} for an overview and~\cite{Qi2006} for related iterative algorithms.
The expectation-maximisation point of view has also led to alternative algorithms~\cite{Fessler1993}, but in spite of various competing approaches, ML-EM (actually, its more numerically efficient variation OSEM~\cite{Hudson1994}) has remained the algorithm used in practice in many PET scanners.

Despite its success, the ML-EM algorithm \eqref{discrete_mlem} is known to produce undesirable spikes along the iterates,
where some pixels take increasingly high values. 
The toy example presented in \autoref{example_intro} is an example of such artefacts in the case of PET. The reconstruction of a torus of maximum $1$ with $100$ iterations of ML-EM indeed exhibits some pixels with values as high as about $6$. 

This phenomenon has long been noticed in the literature, where images
are referred to as ``spiky'' or ``speckled''~\cite{Silverman1990},
others talking about ``the chequerboard effect''~\cite{Vardi1985}. In the discrete case, the works~\cite{Byrne1993, Byrne1995, Byrne1996} have provided a partial explanation for this result. The author proves that, under general conditions (which include $\ndet<r$), the minimum of~\eqref{discrete_likelihood} is such that it has at most $\ndet-1$ non-zero entries whenever $\data \notin \{A \mu, \mu \geq 0\}$.  

To the best of our knowledge, a theoretical justification for the subsistence of \textit{only a few} non-zero entries has however remained elusive. 

\begin{figure*}
		\centering	
		\begin{subfigure}[t]{0.42\textwidth}
			\centering	
			\begin{tikzpicture}[
			remember picture,
			spy using outlines={%
				circle,
				red,
				magnification=3,
				size=2cm,
				connect spies,
				spy connection path={\draw[thick] (tikzspyonnode) -- (tikzspyinnode);}
			}
			]
			\node {\includegraphics[width=\textwidth, clip]{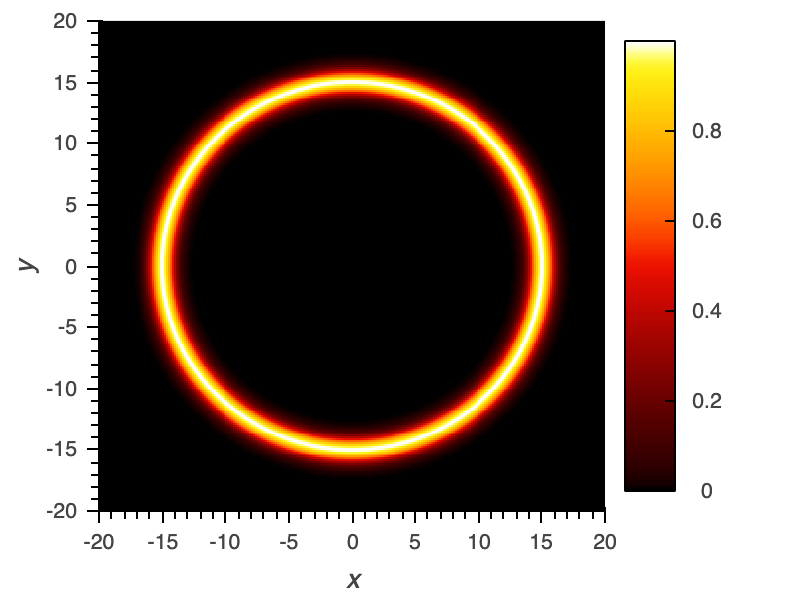}};
			\end{tikzpicture}
		\end{subfigure}
		\begin{subfigure}[t]{0.42\textwidth}
			\centering		
			\begin{tikzpicture}[
			remember picture,
			spy using outlines={%
				circle,
				blue,
				magnification=3,
				size=1cm,
				connect spies,
				spy connection path={\draw[thick] (tikzspyonnode) -- (tikzspyinnode);}
			}
			]
			\node {\includegraphics[width=\textwidth, clip]{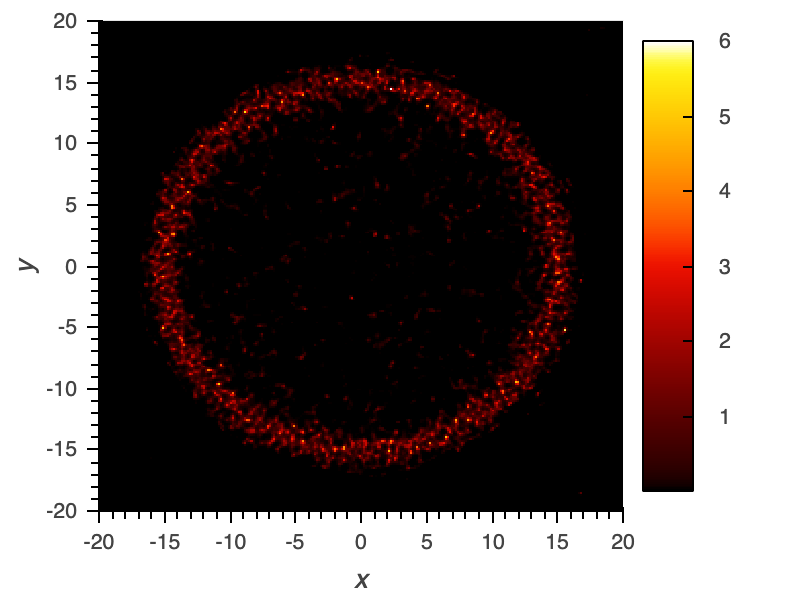}};
			\spy on (-0.05,1.40) in node [left] at (1.2,3.1);
			\end{tikzpicture}
		\end{subfigure}
		\caption{
      Phantom and reconstruction after $100$ iterations of ML-EM, with a zoom on the region containing the pixel of highest value.
		}
		\label{example_intro}
	\end{figure*}



The aim of the present paper is to better understand that phenomenon via the analysis \emph{in a continuous setting} of the minimisation problem~\eqref{discrete_likelihood} and the corresponding ML-EM algorithm~\eqref{discrete_mlem}.
The continuous setting here refers to the image not being discretised on a grid.
Note however that we keep the data space discrete.

Informally, considering $\mu$ as an element in some function space, we consider forward operators $A$ of the form
\[ (A \mu)_i := \int_K a_i(x) \mu(x) \, \dd x,\]
where $K$ is the compact on which one aims at reconstructing the image, and $a_i$ is some non-negative function on $K$.
This covers a wide range of applications, including PET.



One of our motivations
is to derive algorithms for Poisson inverse problems with movement, for example for PET acquisition of a moving organ~\cite{Jacobson2003}.
In that case, movement can be modelled by deformations of images which do not easily carry over to discretised images (simply because interesting deformations do not preserve the grid).
It is then desirable to express the problem in a continuous setting, in order to both analyse the algorithms proposed in the literature, and to derive new ones~\cite{Hinkle2012, Oktem2019}.



The field of inverse problems for imaging, with a continuum description of the unknown image, is abundant~\cite{Bertero1998, Baumeister2005}.
Most often, the image is taken to be a function in some appropriate Sobolev space.
To the best of our knowledge, however, there are relatively few results concerning the continuum description of the Poisson likelihood and the ML-EM algorithm for solving it.

In~\cite{Multhei1987, Multhei1989, Multhei1992} and~\cite{Resmerita2007}, both the image and data are considered in continuum,
with a deterministic description of noise.
These authors assume that detectors $a_i$ lie in $L^{\infty}(K)$ and correspondingly assume that the image $\mu$ lies in $L^1(K)$.
They study the convergence properties of the corresponding ML-EM algorithm in detail.
In the first series of three papers, the compact $K$ is restricted to $K = [0,1]$. 

Our paper differs from these works in that we do not make the two following restrictive assumptions, common to \cite{Multhei1987, Multhei1989, Multhei1992,Resmerita2007}.
The first restriction is to assume the existence of a non-negative solution $\mu$ to the equation $A \mu = \data$, assumed to lie in $L^1(K)$.
The second restriction is to assume that the functions $a_i$ are bounded away from zero.
This last assumption is unrealistic for some applications such as PET~\cite[Remark~6.1]{Resmerita2007}.

The seminal paper \cite{Mair1996}
considers the optimisation problem over the set of non-negative Borel measures as we do.
They obtain the corresponding likelihood function informally as the limit of the discrete one,
but do not prove that it is an actual maximum likelihood problem for the PET statistical model.
They then proceed to study the problem of whether minimisers can be identified with bounded functions,
and not merely measures which might have a singular part.
They indeed note that in some very specific cases (see also~\cite{Multhei1992}), one can prove that the minimiser should be a Dirac mass.
They speculate that there might be a link with the usual spiky results from ML-EM.
They, however, do not provide any general conditions for sparsity.


Working in the space of non-negative measures $\MM_+$, our main contributions are as follows:

\begin{description}
\item[Continuous framework]
  We prove
  that the continuous setting of measures is precisely the maximum likelihood problem with a Poisson point process model (\autoref{pet_stat_model}),
  and that the natural generalisation of the ML-EM iterates~\eqref{eq:mlemcont} indeed corresponds to the expectation-maximisation method associated to that continuous statistical model (see \autoref{sec:statmodel}).
\item[Sparsity] We give a precise sparsity criterion (sparsity means that any optimal solution has singular support): if the data $\data$ is outside the cone $\imcone$, then all optimal solutions are necessarily sparse (\autoref{cor_sparse});
  if the data $\data$ is inside the cone $\imcone$, then there exist absolutely continuous solutions  (\autoref{smooth_int}).
\item[Properties of ML-EM iterates] We show the expected properties of the ML-EM iterates, namely monotonicity (\autoref{cor_dist}) and the fact that cluster points are fixed points of the algorithm (\autoref{Fixed_point}). 
\item[Properties of ML-EM solutions] 
  We show that in the non-sparse case, i.e., when an absolutely continuous solution exists as just mentioned, ML-EM iterates are optimal and have full support (\autoref{thm_smooth}). In the sparse case, we provide a weak sparsity result for cluster points (\autoref{weak_sparsity}), and we give an explicit example of ML-EM converging to a sum of point masses (\autoref{Dirac}). 
\item[Effect of noise] We derive estimates on the probability to be in the sparse case, depending on the noise level (\autoref{prop:nbound}, \autoref{exp_dec}).
\item["Spiky" artefacts] With these results, we provide an explanation for the artefacts of \autoref{example_intro}: they are related to the sparsity result.
By weak duality, we can indeed \emph{certify} that 
optimal measures 
should be sums of point masses in that case, as detailed in \autoref{sec:numerics} dedicated to simulations.
\end{description}

\subsection*{Outline of the paper}
The paper is organised as follows.
In \autoref{sec_def}, we introduce the functional and ML-EM in continuum in detail, with all the necessary notations, normalisations, definitions and useful properties about Kullback--Leibler divergences.
\autoref{sec_min} contains all results on the functional minimisers, starting from the optimality conditions to the diverging cases of the data $\data$ being inside or outside the cone $\imcone$.
\autoref{sec_mlem} is devoted to the algorithm ML-EM itself, with the proof of its usual properties in continuum together with the implications they have on the case where the data $\data$ is in the cone $\imcone$.
In \autoref{sec_stat}, we estimate the probability that the data $\data$ ends up outside the image cone $\imcone$.
In \autoref{sec:numerics}, we present simulations which confirm our theoretical predictions.
Finally, in \autoref{sec_conc} we conclude with open questions and perspectives.


\section{Maximum likelihood and ML-EM in continuum}
\label{sec_def}

\subsection{Mathematical background}

\subsubsection{Space of Radon measures.}
As stated in the introduction, we model the image to reconstruct as a non-negative measure $\mu$ defined on a compact set $K$.
Some of our results require $K\subset\mathbb{R}^p$ (typically, $p=2$ or $3$).

More precisely, we will consider the set of Radon measures, denoted $\MM(K)$ and defined as the topological dual of the set of continuous functions over $K$, denoted $\Cont(K)$.
The space of non-negative measures will be denoted by $\MM_+(K)$.
For brevity, we will often write $\MM$ for $\MM(K)$ and $\MM_+$ for $\MM_+(K)$ when there is no ambiguity as to the underlying compact $K$. 

We identify a linear functional $\mu \in \MM_+$ with its corresponding Borel measure (as per the Riesz--Markov representation Theorem), using $\mu(B)$ to denote the measure of a Borel subset $B$ of $K$. We will also sometimes write the dual pairing between a measure $\mu \in \MM$ and a function $f \in \Cont(K)$ as
\[\scl{\mu}{f} = \int_K f d\mu.
\]
The support of a measure $\mu \in \MM$ is defined as the closed set
\[\operatorname{supp}(\mu) := \setc[\big]{x \in K}{ \mu(N) > 0, \quad \forall N \in N(x)}
  ,
\]
where $N(x)$ is the set of all open neighbourhoods of $x$. 

Finally, recall that, by the Banach--Alaoglu Theorem, bounded sets in $\MM$ are weak-$\ast$ compact~\cite{Rudin1991}.

\subsubsection{Kullback--Leibler divergence.}
We here recall the definition of the Kullback--Leibler (KL) divergence. Instead of giving the general definition, we make the two instances that will actually be needed in this paper explicit, for (non-normalised) non-negative vectors in $\mathbb{R}^{\ndet}$, and for probability measures on $K$.

\begin{itemize}
\item 
\emph{For vectors in $\mathbb{R}^{\ndet}$.}
For any two non-negative vectors $u$ and $v$ in $\mathbb{R}^{\ndet}$, we define the Kullback--Leibler divergence between $u$ and $v$ as
\[d(u||v) := \sum_{i=1}^{\ndet} \bigg(v_i - u_i - u_i \log\bigg(\frac{v_i}{u_i}\bigg)\bigg), \]
with the convention $0 \log(0) = 0$ and $d(u||v) = +\infty$ if there exists $i$ such that $u_i = 0$ and $v_i>0$.

\item 
\emph{For probability measures on $K$.}
For any two probability measures $\mu$ and $\nu$ on $K$, we define the Kullback--Leibler divergence between $\mu$ and~$\nu$ 
\[D(\mu|| \nu) :=  \int_K\log\bigg(\frac{\dd \mu}{\dd \nu}\bigg) \, d\mu, \]
if $\mu$ is absolutely continuous with respect to $\nu$ (denoted $\mu \ll \nu$) and $\log\big(\frac{\dd \mu}{\dd \nu}\big)$ is integrable with respect to $\mu$.
Here $\frac{\dd \mu}{\dd \nu}$ stands for the Radon--Nikodym derivative of $\mu$ with respect to $\nu$.
Otherwise, we define $D(\mu|| \nu) := +\infty$.
\end{itemize}

When a measure is absolutely continuous with respect to the Lebesgue measure on~$K \subset \RR^p$, we simply say that it is absolutely continuous. Any reference to the Lebesgue measure implicitly assumes that $K$ stands for the closure of some bounded domain in $\RR^p$ (i.e., a bounded, connected and open subset of~$\RR^p$).

\subsection{Statistical model}

\label{sec:statmodel}

We want to recover a measure $\mu \in \MM_+$ from independent Poisson distributed measurements 
\beq
\label{stat_model}
N_i \sim \mathcal{P}\bigg(\int_K a_i d \mu \bigg), \qquad  i =1, \ldots, \ndet,
\eeq
with
\beq
a_i \geq 0, \quad a_i \in \Cont(K), \qquad i =1, \ldots, \ndet.
\eeq

\subsubsection{Positron Emission Tomography~\cite{Ollinger1997}.} In PET, a radiotracer injected into the patient and, once concentrated into tissues, disintegrates by emitting a positron. This process is well known to be accurately modelled by a Poisson point process, itself defined by a non-negative measure. After a short travel distance called \emph{positron range}, this positron interacts with an electron. The result is the emission of two photons in random opposite directions. Pairs of detectors around the body then detect simultaneous photons, and the data is given by the number of counts per pair of detectors.

In the case of PET, $\ndet$ is the number of of detectors (i.e., pairs of {single} detectors).
For a given point~$x \in K$ and detector $i \in \{1, \ldots, \ndet\}$, $a_i(x)$ then denotes the probability that a positron emitted in~$x$ will be detected by detector $i$.  

Finally, we will throughout the paper assume  
\beq
\label{positive}
\sum_{i=1}^\ndet a_i > 0 \text{ on } K.
\eeq
For PET, this amounts to assuming that that the points in $K$ are in the so-called field of view, namely that any emission has a non-zero probability to be detected.

\subsubsection{Derivation of the statistical model~\eqref{stat_model} for PET}
We proceed to give a proof that the statistical model~\eqref{stat_model} (and thus, the corresponding likelihood function) applies to PET. Here, we assume that the emission process of PET is modelled by a Poisson point process, defined by a measure $\mu \in \MM_+$, and that each point drawn from the Poisson process has a probability $a_i(x)$ to be detected by detector $i$. 

\begin{proposition}
\label{pet_stat_model}
The statistical model~\eqref{stat_model} applies to PET. 
\end{proposition}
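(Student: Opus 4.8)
The plan is to realise the PET emission process as a Poisson point process $\Pi$ on $K$ whose intensity measure is exactly the reconstructed measure $\mu \in \MM_+$, so that for every Borel set $B \subset K$ the number of emission events in $B$ is Poisson distributed with mean $\mu(B)$, with independence over disjoint sets. The detection mechanism is then a \emph{marking} (or colouring) of this process: each emitted point at location $x$ is, independently of all others, assigned to detector $i$ with probability $a_i(x)$, or declared undetected with the remaining probability $1 - \sum_{j=1}^{\ndet} a_j(x)$. Here I would invoke the physical fact that the detection events are mutually exclusive (a single annihilation is registered by at most one detector pair), so that $\sum_{j} a_j(x) \leq 1$ and the undetected probability is well defined. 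The quantity $N_i$ is then the total number of points receiving colour $i$.

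First I would recall the colouring theorem for Poisson point processes: independently colouring the points of a Poisson process of intensity $\mu$ with colours $0, 1, \ldots, \ndet$, using location-dependent probabilities $p_i(x)$, produces \emph{independent} Poisson processes $\Pi_0, \Pi_1, \ldots, \Pi_\ndet$ whose intensity measures are $p_i(x)\,\dd\mu(x)$. Applying this with $p_i = a_i$ for $i \geq 1$ immediately gives that $\Pi_i$ is Poisson with intensity $a_i \, \dd\mu$, so that $N_i = \Pi_i(K) \sim \mathcal{P}\!\left(\int_K a_i \, \dd\mu\right)$, while the independence of the $\Pi_i$ yields the independence of $N_1, \ldots, N_\ndet$ --- which is precisely the model~\eqref{stat_model}.

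To keep the argument self-contained one could instead verify the same conclusion by computing the joint probability generating function of $(N_1, \ldots, N_\ndet)$ directly. Conditioning on $\Pi$ and using independence of the marks gives $\E\!\left[\prod_i s_i^{N_i} \mid \Pi\right] = \prod_{x \in \Pi}\bigl(1 - \sum_i a_i(x)(1 - s_i)\bigr)$, and taking the expectation via the generating functional of the Poisson process, $\E\bigl[\prod_{x \in \Pi} f(x)\bigr] = \exp\bigl(-\int_K (1 - f)\,\dd\mu\bigr)$, collapses this to $\prod_i \exp\bigl(-(1 - s_i)\int_K a_i \,\dd\mu\bigr)$. The factorisation over $i$ proves independence, and each factor is recognisably the generating function of $\mathcal{P}(\int_K a_i\,\dd\mu)$.

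The steps are largely bookkeeping once the framework is fixed; the points requiring care are conceptual rather than computational. The main obstacle is justifying the marking construction for a general intensity $\mu \in \MM_+$ on the compact $K$, which may be singular or carry atoms rather than being the absolutely continuous intensity usually pictured; here finiteness of $\mu$ together with continuity of the $a_i$ on $K$ ensures that all integrals and the generating functional are well defined. The remaining subtlety is the modelling hypothesis $\sum_{j} a_j \leq 1$ that turns detection into a genuine colouring and thereby delivers the independence of the $N_i$: using the same point process with separate independent thinnings per detector would instead correlate them, so this mutual exclusivity is exactly what the PET physics supplies.
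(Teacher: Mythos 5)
Your proposal is correct and follows essentially the same route as the paper: both model the emissions as a Poisson point process with intensity $\mu$ and invoke the thinning/colouring property of such processes (you make the ``undetected'' class explicit so the marking probabilities sum to one, which the paper leaves implicit) to conclude that the counts $N_i$ are independent with $N_i \sim \mathcal{P}\left(\int_K a_i \, \dd\mu\right)$. Your supplementary computation with the probability generating functional is a valid self-contained verification of the same colouring theorem, not a different argument.
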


\begin{proof}
The proof relies on the following properties of Poisson point processes~\cite{Last2017}:
\begin{itemize}
\item \emph{law of numbers}: the number of points emitted by a Poisson process of intensity $\mu$ follows the Poisson law with parameter $\int_K \ndet \mu = \mu(K)$.
\item \emph{thinning property}: the points that are kept with (measurable) probability $p : K \mapsto [0,1]$ still form a Poisson point process, with intensity $p \mu$, and it is independent from that of points that are not kept. This property generalises to $p_i$, $1 \leq i \leq \ndet$ with $\sum_{i=1}^\ndet p_i(x) = 1$ for all $x \in K$. 
\end{itemize}

By the thinning property, the families of points which lead to an emission detected in detector $i$, $i = 1, \ldots, \ndet$, are all independent Poisson processes with associated measure $a_i \mu$, for $i=1, \ldots, \ndet$. Thus, the random variables $N_i$ representing the number of points detected in detector $i$ are independent and of law $\mathcal{P}(\int_K a_i \ndet \mu)$, which proves the claim.
\end{proof}

\subsubsection{Maximum likelihood problem.}

The likelihood corresponding to the statistical model~\eqref{stat_model} reads
\[L(N_1, \ldots, N_p ; \mu)  =  \prod_{i=1}^\ndet L(N_i ; \mu) = \prod_{i=1}^{\ndet} e^{-\int_K a_i \dd\mu } \frac{(-\int_K a_i \dd\mu )^{N_i}}{N_i!}, \]
since $\PP(N_i = n_i) = e^{-\int_K a_i \dd\mu} \frac{(-\int_K a_i \dd\mu)^{n_i}}{n_i!}$. 

Dropping the factorial terms (they do not depend on $\mu$ and will thus play no role when maximising the likelihood),
we get
\beq
\log(L(N_1, \ldots, N_p ; \mu))  = -\sum_{i=1}^{\ndet} \int_K a_i \dd\mu +\sum_{i=1}^{\ndet} N_i \log\left(\int_K a_i \dd \mu \right).
\eeq 

The corresponding maximum likelihood problem, written for a realisation $n_i$ of the random variable $N_i$, $i =1, \ldots, \ndet$, is given by 
\beq
\label{likelihood}
\operatorname*{max}_{\mu \in \MM_+} \quad- \int_K \bigg(\sum_{i=1}^{\ndet} a_i\bigg) \dd\mu + \sum_{i=1}^{\ndet} n_i \log\left(\int_K a_i \dd\mu\right).
\eeq
Defining the operator
\begin{align*}
A \colon   \MM &\longrightarrow   \mathbb{R}^{\ndet} \\
 \mu & \longmapsto \left(\scl{\mu}{a_i}\right)_{1 \leq i \leq \ndet}, 
\end{align*}
the optimisation problem conveniently rewrites in terms of the Kullback--Leibler divergence: upon adding constants and taking the negative log-likelihood problem, it reads
\beq
\label{likelihood_distance}
\operatorname*{min}_{\mu \in \MM_+} \quad d(n || A \mu). 
\eeq

\subsubsection{ML-EM iterates.}
We now define the ML-EM algorithm, which aims at solving the optimisation problem~\eqref{likelihood}. It is given by the iterates
\beq
\mu_{k+1} =  \frac{ \mu_k}{ \sum_{i=1}^{\ndet} a_i } \left(\sum_{i=1}^{\ndet}\frac{ n_i a_i}{\int_K a_i \dd \mu_k }\right),
\eeq
starting from an initial guess $\mu_0 \in \MM_+$. In agreement with the Kullback--Leibler divergence, we choose the convention that divisions of the form $0/0$ are of course taken to be equal to $0$.

Note that this algorithm can be shown to be an EM algorithm for the continuous problem. The proof is beyond the scope of this paper, so we decide to omit it, but we just mention that the corresponding so-called \emph{complete data} would be
 given by the positions of points together with the detector that has detected each of them.
 
\subsection{Normalisations}
Due to the assumption \eqref{positive}, we may without loss of generality assume that
\begin{equation}
  \label{eq:normalisation}
  \sum_{i=1}^{\ndet} a_i = 1
\end{equation}
on $K$.
Otherwise we could just define
$\tilde{\mu} = (\sum_{i=1}^{\ndet} a_i) \mu$ and $\tilde{a}_i = a_i / (\sum_{j=1}^{\ndet} a_j)$.
This normalisation now implies $0 \leq a_i \leq 1$ for all $i = 1, \ldots, {\ndet}$.

We further normalise the measures by dividing the functional by $n := \sum_{i=1}^{\ndet} n_i$, considering $\mu:= \frac{\mu}{n}$ to remove the factor. 
We then define 
\[\data_i :=  \frac{n_i}{n}.\]
From now on, we consider the optimisation problem (minimisation of the negative log-likelihood):
\beq
\label{eq:optprob}
\min_{\mu \in \MM_+} \quad \loss(\mu),
\eeq 
where
\begin{align}
  \label{eq:defloss}
\loss(\mu)& := \int_K \dd\mu  - \sum_{i=1}^{\ndet} \data_i \log\left(\int_K a_i \dd\mu\right) \\
& = \scl{\mu}{1}  - \sum_{i=1}^{\ndet} \data_i \log\left(\scl{\mu}{a_i}\right),
\end{align}
defined to be $+\infty$  for any measure such that $\scl{\mu}{a_i} =  0$ for some $i \in \suppdata$, where
\[
  \suppdata \coloneqq \setc[\big]{i = 1, \dotsc, \ndet}{\data_i > 0}
.
\]

\noindent
After normalisation, the ML-EM iterates are given by 
\beq
\label{eq:mlemiterate}
\mu_{k+1} = \mu_k \left(\sum_{i=1}^{\ndet}\frac{ \data_i a_i}{\int_K a_i \dd\mu_k }\right) = \mu_k \left(\sum_{i=1}^{\ndet}\frac{ \data_i a_i}{\scl{\mu_k}{a_i} }\right) .
\eeq
We recall the property that ML-EM preserves the total number of counts: $\scl{\mu_k}{1} = 1$ for all $k \geq 1$, which corresponds to $\scl{\mu_k}{1}= n = \sum_{i=1}^{\ndet} n_i$ before normalisation. We also emphasise the important property that iterations cannot increase the support of the measure, namely 
\[\forall k \in \mathbb{N}, \; \operatorname{supp}(\mu_{k+1}) \subset \operatorname{supp}(\mu_k).\]

\subsubsection*{ML-EM iterates are well-defined.}
We assume throughout that the initial measure $\mu_0$ fulfils
\beq
\label{init_1}
 \scl{\mu_0}{a_i} >  0 \qquad \forall i \in \suppdata
.
\eeq
Note that usual practice is to take $\mu_0$ to be absolutely continuous with respect to the Lebesgue measure, typically $\mu_0 = 1$, in which case \eqref{init_1} is satisfied. 

The following simple Lemma shows that assumption~\eqref{init_1} ensures that the iterates are well-defined.

\begin{lemma}
The ML-EM iterates~\eqref{eq:mlemiterate} satisfy 
\[  \scl{\mu_k}{a_i} >  0 \implies \scl{\mu_{k+1}}{a_i} > 0
  \qquad
  i \in \suppdata
  .\]
\end{lemma}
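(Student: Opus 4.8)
The plan is to use the explicit form of the iterate~\eqref{eq:mlemiterate} together with a single-term lower bound. Recall that $\mu_{k+1}$ is the measure $\mu_k$ weighted by the non-negative function $\sum_{j=1}^{\ndet} \data_j a_j / \scl{\mu_k}{a_j}$. Pairing against $a_i$ and writing the pairing as an integral against $\mu_k$, I would compute
\[
\scl{\mu_{k+1}}{a_i} = \int_K a_i \left(\sum_{j=1}^{\ndet} \frac{\data_j a_j}{\scl{\mu_k}{a_j}}\right) \dd\mu_k .
\]

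Since each $a_j \geq 0$, each $\data_j \geq 0$, the denominators $\scl{\mu_k}{a_j}$ appearing with $\data_j > 0$ are positive (so that the integrand is well-defined), and $\mu_k \geq 0$, every summand in the integrand is a non-negative function integrated against a non-negative measure. I may therefore discard all terms except $j = i$ to obtain the lower bound
\[
\scl{\mu_{k+1}}{a_i} \geq \frac{\data_i}{\scl{\mu_k}{a_i}}\, \int_K a_i^2 \, \dd\mu_k = \frac{\data_i}{\scl{\mu_k}{a_i}}\, \scl{\mu_k}{a_i^2} .
\]

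It then remains to check that each of the three factors on the right is strictly positive. The factor $\data_i$ is positive precisely because $i \in \suppdata$; the denominator $\scl{\mu_k}{a_i}$ is positive by the hypothesis of the implication. The only substantive point is that $\scl{\mu_k}{a_i^2} > 0$: since $a_i \geq 0$ and $\scl{\mu_k}{a_i} = \int_K a_i \, \dd\mu_k > 0$, the set $\{a_i > 0\}$ must carry positive $\mu_k$-mass, for otherwise $a_i = 0$ holds $\mu_k$-almost everywhere and $\scl{\mu_k}{a_i}$ would vanish, contradicting the hypothesis. On $\{a_i > 0\}$ one has $a_i^2 > 0$, so $\scl{\mu_k}{a_i^2} > 0$, which closes the argument.

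I do not anticipate a genuine obstacle: the proof is essentially bookkeeping of non-negativity together with the elementary measure-theoretic fact relating $\scl{\mu_k}{a_i}$ and $\scl{\mu_k}{a_i^2}$ via the set $\{a_i > 0\}$. The one point deserving care is well-definedness of the integrand, which I would address by noting that, with the convention $0/0 = 0$, the terms with $\data_j = 0$ cause no trouble, while the terms with $j \in \suppdata$ have strictly positive denominators in the regime where the lemma is applied; starting from~\eqref{init_1} and invoking the implication for every $i \in \suppdata$ then propagates positivity of all these denominators by induction, so that the full sequence of iterates is well-defined.
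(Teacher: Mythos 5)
Your proof is correct and follows essentially the same route as the paper's: both pair $a_i$ against the iterate, discard every term of the sum except $j=i$, and reduce the claim to the positivity of $\data_i\,\scl{\mu_k}{a_i^2}/\scl{\mu_k}{a_i}$. The only (minor) divergence is in the last step: the paper bounds $\scl{\mu_k}{a_i^2} \geq \scl{\mu_k}{a_i}^2/\mu_k(K)$ via Cauchy--Schwarz, whereas you deduce $\scl{\mu_k}{a_i^2}>0$ from the fact that $\{a_i>0\}$ must carry positive $\mu_k$-mass; both verifications are equally valid.
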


\begin{proof}
From the Cauchy--Schwarz inequality, $\mu_k(K) \scl{\mu_k}{a_i^2} \geq \scl{\mu_k}{a_i}^2$.  Combined with the definition of ML-EM iterates, this entails for any $i \in \suppdata$,
\[\scl{\mu_{k+1}}{a_i} = \sum_{j=1}^{\ndet} \data_j \,  \frac{\scl{\mu_k}{a_i a_j}}{\scl{\mu_k}{a_j}} \geq \data_i \,  \frac{\scl{\mu_k}{a_i^2}}{\scl{\mu_k}{a_i}}  \geq \data_i \frac{\scl{\mu_k}{a_i}}{\mu_k(K)}  > 0.\]
\end{proof}


\subsection{Adjoint and cone}
\label{sec:coneadjoint}

Since $A \colon \Cont(K)^* \to \mathbb{R}^{\ndet}$, we can define its adjoint $A^\ast\colon \RR^{\ndet} \to \Cont(K)$ (identifying $\RR^{\ndet}$ as a Euclidean space with its dual), which is given by
\begin{equation}
  \label{eq:defadjoint}
  A^\ast w = \sum_{i=1}^{\ndet} w_i a_i,
    \qquad w \in \RR^{\ndet}
  .
\end{equation}

The set $\imcone =\left\{ A \mu, \; \mu \in \MM_+\right\} \subset\mathbb{R}^{\ndet}$ is a closed and convex cone and, as proved in~\cite{Georgiou2005}, its dual cone $\imcone^\ast$ can be characterised as being given by the set of vectors $\lambda \in \mathbb{R}^{\ndet}$ such that $\sum_{i=1}^{\ndet} \lambda_i a_i \geq 0$ on $K$, i.e.,
\begin{equation}
  \label{eq:propdualcone}
  \imcone^\ast = \setc*{\lambda \in \mathbb{R}^{\ndet}}{A^* \lambda \geq 0 \text{ on } K}
  .
\end{equation}
As a result, the interior of the dual cone $\imcone^{\ast}$ is given by the vectors $\lambda \in \mathbb{R}^{\ndet}$ such that $A^* \lambda > 0$ on $K$.

The normalisation condition \eqref{eq:normalisation} can now be rewritten
\begin{equation}
  \label{eq:normalisationop}
  A^* \one = \one
  ,
\end{equation}
where \(\one\) is the vector of $\RR^{\ndet}$ which all components are one: \(\one = (1,\dotsc,1)\).
Moreover, we can rewrite the ML-EM iteration \eqref{eq:mlemiterate} as
\[
  \mu_{k+1} = \mu_k \, A^*\Big(\frac{\data}{A \mu_k}\Big)
 ,
  \]
  which is the continuous analogue to the discrete case~\eqref{discrete_mlem},
  taking into account the normalisation \eqref{eq:normalisationop}.

\subsubsection*{Minimisation over the cone. }
The problem $\operatorname*{min}_{\mu \in \MM_+} d(\data || A\mu)$, is equivalent to the following minimisation problem over the cone $\imcone$:
\beq
\label{likelihood_cone}
\min_{w \in \imcone} \quad d(\data || w).
\eeq
Indeed, if $w^\star$ is optimal for the problem~\eqref{likelihood_cone}, any $\mu^\star$ such that $A\mu^\star = w^\star$ is optimal for the original problem.
From the property $d(\data || w) = 0 \iff \data = w$, we also infer that when $\data \in \imcone$, $\mu^\star$ is optimal if and only if $A\mu^\star = \data$.

\section{Properties of minimisers}
\label{sec_min}
In this section, we gather results concerning the functional $\loss$ and its minimisers, proving that they are sparse when the data $\data$ is not in the image cone $\imcone$. 
First, we note that the functional $\loss$ defined by \eqref{eq:defloss} is a convex and proper function. 

\subsection{Characterisation of optimality}

We now derive necessary and sufficient \emph{optimality conditions}.

We first prove that any optimum must have a fixed unit mass. 

\begin{proposition}
\label{prop:massone}
If $\mu^\star$ is optimal for \eqref{eq:optprob}, then $\scl{\mu^\star}{1} =\int_K \dd \mu^\star = 1$. 
\end{proposition}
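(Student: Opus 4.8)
The plan is to use a first-order scaling argument. The functional $\loss$ is positively $1$-homogeneous in one part and logarithmically dependent in the other, so the optimal total mass is pinned down by differentiating along the ray $t \mapsto t\mu^\star$.

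\medskip

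First I would observe that for any fixed $\mu \in \MM_+$ with $\scl{\mu}{a_i} > 0$ for all $i \in \suppdata$ (which in particular holds for an optimal $\mu^\star$, since otherwise $\loss(\mu^\star) = +\infty$ and $\mu^\star$ could not be optimal — note $\loss$ is proper and $\loss(\mu_0) < \infty$ under \eqref{init_1}), the scaled measure $t\mu$ stays in $\MM_+$ for $t > 0$, and we can compute the one-dimensional function
\[
  g(t) := \loss(t\mu) = t\,\scl{\mu}{1} - \sum_{i=1}^{\ndet} \data_i \log\left(t\,\scl{\mu}{a_i}\right)
  = t\,\scl{\mu}{1} - \left(\sum_{i=1}^{\ndet}\data_i\right)\log t - \sum_{i=1}^{\ndet}\data_i \log\scl{\mu}{a_i}.
\]
Here I use that $\log(t\,\scl{\mu}{a_i}) = \log t + \log\scl{\mu}{a_i}$ and that the $\data_i$ sum to one after the normalisation introduced before \eqref{eq:optprob}, so the coefficient of $\log t$ is exactly $\sum_i \data_i = 1$. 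Thus $g(t) = t\,\scl{\mu}{1} - \log t + C$ for a constant $C$ independent of $t$.

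\medskip

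Next I would minimise $g$ over $t > 0$. Since $g'(t) = \scl{\mu}{1} - 1/t$, and $g$ is strictly convex with $g(t)\to+\infty$ as $t\to 0^+$ and as $t\to\infty$, the unique minimiser is $t^\star = 1/\scl{\mu}{1}$, at which the scaled mass is $\scl{t^\star\mu}{1} = t^\star\scl{\mu}{1} = 1$. Applying this with $\mu = \mu^\star$: if $\mu^\star$ is optimal, then in particular it minimises $\loss$ along its own ray, so the minimiser of $g$ must be attained at $t = 1$, forcing $t^\star = 1$ and hence $\scl{\mu^\star}{1} = 1$. (Equivalently, if $\scl{\mu^\star}{1}\neq 1$ one could strictly decrease $\loss$ by rescaling, contradicting optimality.)

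\medskip

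The only genuinely delicate point is justifying that the coefficient of $\log t$ equals $1$, which relies squarely on the normalisation $\sum_i \data_i = 1$; without it the optimal mass would be $\sum_i \data_i$ rather than $1$. The rest is the elementary calculus of a strictly convex scalar function, so I expect no real obstacle beyond being careful that $\mu^\star$ has finite loss (so that all the logarithms are finite and the rescaling is legitimate).
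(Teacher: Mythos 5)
Your proof is correct and is essentially the paper's own argument: the paper decomposes $\loss(\mu) = -\sum_i \data_i \log\bigl(\scl{\mu}{a_i}/\scl{\mu}{1}\bigr) + \bigl(\scl{\mu}{1} - \log\scl{\mu}{1}\bigr)$ into a scale-invariant part plus the mass term $m - \log m$ (using $\sum_i \data_i = 1$, exactly as you do), and concludes that an optimum must minimise the latter at $m = 1$. Your restriction to the ray $t \mapsto t\mu^\star$ is the same rescaling observation, just phrased as a one-dimensional minimisation rather than as a decomposition of the functional.
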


\begin{proof}
%
For any $\mu \in \MM_+$, we have
\begin{equation}
  \label{eq:massproof}
  \loss(\mu) = -\sum_{i=1}^{\ndet} \data_i \log\left(\frac{\scl{\mu}{a_i}}{\scl{\mu}{1}}\right) + \big(\scl{\mu}{1}- \log(\scl{\mu}{1})\big).
\end{equation}
Observe that the second term depends only on the mass $\scl{\mu}{1}$,
whereas the first term is scale-invariant.
As a result, an optimal $\mu$ has to minimise the second term,
which turns out to admit the unique minimiser $\scl{\mu}{1}= 1$.
\end{proof}

\begin{remark}
  This result follows from the optimality conditions derived later in \autoref{prop:KKT}, but the proof above is simple and also highlights that the maximum likelihood estimator for $\mu$ is consistent with the maximum likelihood estimator for $\int_K  \dd\mu$, as
  the second term in \eqref{eq:massproof} is none other than the negative log-likelihood of the total mass. 
\end{remark}

\begin{corollary}
The infimum of $\loss$ is a minimum.
\end{corollary}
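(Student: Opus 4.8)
The plan is to apply the direct method of the calculus of variations: produce a minimising sequence of uniformly bounded mass, extract a weak-$\ast$ convergent subsequence via Banach--Alaoglu, and conclude by lower semicontinuity of $\loss$. Before that, I would record two elementary facts. Using the decomposition \eqref{eq:massproof}, the first term is non-negative (the normalisation \eqref{eq:normalisation} gives $a_i \leq 1$, hence $\scl{\mu}{a_i} \leq \scl{\mu}{1}$, so each logarithm is $\leq 0$), while the second term $t \mapsto t - \log t$ is bounded below by $1$; thus $\loss \geq 1$ and the infimum is finite. Moreover $\loss$ is proper, since any $\mu_0$ satisfying the standing assumption \eqref{init_1} has $\loss(\mu_0) < \infty$.

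Next I would build a good minimising sequence. Starting from an arbitrary minimising sequence, I rescale each measure to have unit mass: by \eqref{eq:massproof} the first term is scale-invariant whereas the second term is minimised exactly at mass $1$, so rescaling cannot increase $\loss$. This yields a minimising sequence $(\mu_n)$ with $\scl{\mu_n}{1} = 1$ for all $n$. (Equivalently, since $t - \log t \to +\infty$ both as $t \to 0^+$ and as $t \to +\infty$, the masses along any minimising sequence are automatically bounded and bounded away from $0$, so normalising merely streamlines the argument.) As $\MM_+$ is weak-$\ast$ closed and the sequence is norm-bounded in $\MM$, the Banach--Alaoglu Theorem provides a subsequence converging weak-$\ast$ to some $\mu^\star \in \MM_+$, with $\scl{\mu^\star}{1} = \lim_n \scl{\mu_n}{1} = 1$ since $1 \in \Cont(K)$ (consistent with \autoref{prop:massone}).

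Finally I would check weak-$\ast$ lower semicontinuity of $\loss$, which gives $\loss(\mu^\star) \leq \liminf_n \loss(\mu_n) = \inf \loss$ and hence that $\mu^\star$ attains the infimum. For each $i$ we have $a_i \in \Cont(K)$, so $\scl{\mu_n}{a_i} \to \scl{\mu^\star}{a_i}$, and the mass term is continuous as well; the only nontrivial terms are $-\data_i \log \scl{\mu}{a_i}$, but $x \mapsto -\log x$ (extended by $+\infty$ at $x = 0$) is lower semicontinuous on $[0,\infty)$, so each such term is weak-$\ast$ lsc and the finite non-negatively weighted sum is too.

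I expect the one point requiring care to be the possibility that $\scl{\mu^\star}{a_i} = 0$ for some $i \in \suppdata$, which would make $\loss(\mu^\star) = +\infty$. This is precisely where lower semicontinuity does the work: it forces $+\infty = \loss(\mu^\star) \leq \inf \loss$, contradicting the finiteness of the infimum established in the first step. Hence this degenerate case cannot occur, $\mu^\star$ has finite energy, and it is a genuine minimiser.
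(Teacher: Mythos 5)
Your proof is correct and follows essentially the same route as the paper's: both reduce to the weak-$\ast$ compact set of unit-mass measures via the decomposition \eqref{eq:massproof} (the content of \autoref{prop:massone}) and then conclude by the Banach--Alaoglu theorem together with weak-$\ast$ (semi)continuity of $\loss$. Your rendition is simply the direct-method phrasing of that argument, with your lower-semicontinuity step serving as a careful justification of the extended-real-valued weak-$\ast$ continuity that the paper asserts without comment.
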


\begin{proof}
From \autoref{prop:massone}, we may restrict the search of optimal solutions to $\setc*{\mu \in \MM_+}{\mu(K) = 1}$, which by the Banach--Alaoglu theorem, is weak-$\ast$ compact. Since $\loss$ is weak-$\ast$ continuous, the claim follows.
\end{proof}

We now give the full optimality conditions. The convex function $\loss$ defined in \eqref{eq:defloss} has the following open domain:
\[\operatorname{dom}(\loss) := \setc{\mu \in \MM_+}{ \scl{\mu}{a_i}> 0 \text{ for } i \in \suppdata}
  .
\]
Notice further that for any $\mu \in \operatorname{dom}(\loss)$, the function $\loss$ is Fr\'echet-differentiable (in the sense of the strong topology).
Its gradient is given
for $\mu\in\operatorname{dom}(\loss)$ is then
the element in the dual $\MM^\ast$ of $\MM$  given by
\begin{equation}
  \label{eq:gradloss}
  \nabla \loss(\mu) 
  = 1- \sum_{i =1}^\ndet \frac{\data_i a_i}{\scl{\mu}{a_i}}
  ,
\end{equation}
which we identify with an element of $\Cont(K)$.


 For any vector $w \in \RR^{\ndet}$, we define $\lambda(w) \in \RR^{\ndet}$ by
 \begin{equation}
   \label{eq:deflambda}
   \lambda_i(w) \coloneqq 1 - \frac{\data_i}{w_i},
 \end{equation}
 (with the convention $\lambda_i = 1$ if $\data_i = 0$, that is, if $i \notin \suppdata$).
  Using $\sum_{i=1}^{\ndet} a_i = 1$, we can rewrite \eqref{eq:gradloss} as 
  \begin{equation}
    \label{eq:mlemcont}
    \nabla \loss(\mu) = A^* \lambda(A \mu) =      \sum_{i=1}^\ndet \lambda_i(A \mu) \, a_i
 .
\end{equation}

\begin{proposition}
  \label{prop:KKT}
The measure $\mu^\star \in \MM$ is optimal for the problem \eqref{eq:optprob} if and only if the following optimality conditions hold 
\begin{equation}
  \label{KKT}
  \begin{aligned}
   A^* \lambda(A \mu^\star)  &\geq 0 \quad\text{on}\quad K, \\
   A^* \lambda(A \mu^\star)  &= 0 \quad\text{on}\quad \operatorname*{supp}(\mu^\star).
  \end{aligned}
\end{equation}
These conditions can be equivalently written as
\begin{equation}
  \begin{aligned}
    \label{KKT_}
    \sum_{i=1}^\ndet \frac{\data_i a_i}{\scl{\mu^\star}{a_i}} &\leq 1 \quad\text{on}\quad K, \\
       \sum_{i=1}^\ndet \frac{\data_i a_i}{\scl{\mu^\star}{a_i}} &= 1 \quad\text{on}\quad \operatorname*{supp}(\mu^\star).
  \end{aligned}
\end{equation}
\end{proposition}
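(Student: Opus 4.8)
The plan is to treat \eqref{eq:optprob} as the minimisation of the convex, proper functional $\loss$ over the convex cone $\MM_+$, to characterise optimality through a first-order variational inequality, and then to translate that inequality into the geometric conditions \eqref{KKT}. First I would record that any optimal $\mu^\star$ necessarily lies in $\operatorname{dom}(\loss)$: since $\loss$ is proper, an optimal $\mu^\star$ satisfies $\loss(\mu^\star) < +\infty$, hence $\scl{\mu^\star}{a_i} > 0$ for every $i \in \suppdata$. In particular the Fréchet gradient $g := \nabla\loss(\mu^\star) = A^\ast\lambda(A\mu^\star) \in \Cont(K)$ given by \eqref{eq:mlemcont} is well defined.

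The central step is to show that $\mu^\star$ is optimal if and only if the variational inequality $\scl{\mu-\mu^\star}{g} \geq 0$ holds for every $\mu \in \MM_+$. For the forward direction I would use a one-sided directional-derivative argument along the segment $\mu_t := (1-t)\mu^\star + t\mu$, $t \in [0,1)$. The point I want to stress — and the step I expect to require the most care — is that this whole segment remains inside the \emph{open} domain $\operatorname{dom}(\loss)$: indeed $\scl{\mu_t}{a_i} \geq (1-t)\scl{\mu^\star}{a_i} > 0$ for each $i \in \suppdata$, because $\scl{\mu}{a_i}\geq 0$ and $\scl{\mu^\star}{a_i}>0$. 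Consequently $t \mapsto \loss(\mu_t)$ is finite and differentiable near $0$; convexity gives $\loss(\mu_t) \leq (1-t)\loss(\mu^\star) + t\loss(\mu)$ while optimality gives $\loss(\mu_t) \geq \loss(\mu^\star)$, and letting $t \to 0^+$ yields $\scl{\mu-\mu^\star}{g}\geq 0$ for every $\mu \in \MM_+$, even those outside the domain. The converse is immediate from the subgradient inequality $\loss(\mu) \geq \loss(\mu^\star) + \scl{\mu-\mu^\star}{g}$ valid for a convex differentiable $\loss$.

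From the variational inequality I would then extract \eqref{KKT} using the cone structure. Testing with $\mu = 0$ and with $\mu = 2\mu^\star$ (both in $\MM_+$) forces the complementary-slackness identity $\scl{\mu^\star}{g} = 0$, after which the inequality collapses to $\scl{\mu}{g} \geq 0$ for all $\mu \in \MM_+$. Testing against Dirac masses $\delta_x \in \MM_+$ and using continuity of $g$ gives $g(x) \geq 0$ for all $x \in K$, which is the first line of \eqref{KKT}. Combined with $\scl{\mu^\star}{g} = \int_K g\,\dd\mu^\star = 0$, a non-negative continuous integrand integrating to zero against the non-negative measure $\mu^\star$ must vanish on $\operatorname{supp}(\mu^\star)$, again by continuity and the definition of support, giving the second line. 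Finally, substituting \eqref{eq:deflambda} and using $\sum_{i=1}^{\ndet} a_i = 1$ gives $g = A^\ast\lambda(A\mu^\star) = 1 - \sum_{i=1}^{\ndet}\frac{\data_i a_i}{\scl{\mu^\star}{a_i}}$, so \eqref{KKT} rewrites verbatim as \eqref{KKT_}. The main obstacle is thus the domain issue flagged above: reconciling differentiability only on the open domain with a variational inequality that must range over the entire cone $\MM_+$, which is resolved precisely by the observation that segments emanating from $\mu^\star$ stay in $\operatorname{dom}(\loss)$.
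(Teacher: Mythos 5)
Your proof is correct and follows essentially the same route as the paper: your variational inequality $\scl{\mu-\mu^\star}{\nabla\loss(\mu^\star)}\geq 0$ for all $\mu\in\MM_+$ is exactly the paper's condition $\nabla\loss(\mu^\star)\in -N_{\MM_+}(\mu^\star)$, and your test measures (the zero measure, dilations of $\mu^\star$, and Dirac masses $\delta_x$) are the same ones the paper uses to characterise the normal cone in \autoref{charact_normal_cone}. The only difference is one of detail: you prove the equivalence between optimality and the variational inequality explicitly (including the observation that segments from $\mu^\star$ stay in the open domain), whereas the paper simply invokes this as a standard fact of convex analysis.
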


\newcommand*\charfunc[1][\MM_+]{\chi_{#1}}

Recall that the \emph{normal cone} to $\MM_+$ at $\mu$ is defined as 
\[N_{\MM_+}(\mu) := \setc[\big]{f \in \Cont(K)}{\forall \nu \in \MM_+, \, \pairing{\nu - \mu}{f}\leq 0}
  .\] 
We need a characterisation of that normal cone before proceeding further.

\begin{lemma}
  \label{charact_normal_cone}
  The normal cone at a given $\mu \in \operatorname{dom}(\loss)$ is given by 
  \[N_{\MM_+}(\mu) = \setc*{f \in \Cont(K)}{ f \leq 0 \text{ on } K,\; \\
      f = 0 \text{ on } \operatorname*{supp}(\mu)}.\] 
\end{lemma}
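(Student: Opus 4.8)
The plan is to characterise the normal cone $N_{\MM_+}(\mu)$ directly from its definition, namely as the set of $f \in \Cont(K)$ satisfying $\pairing{\nu - \mu}{f} \leq 0$ for all $\nu \in \MM_+$. I would prove the two inclusions separately.

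\medskip

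\noindent\textbf{The inclusion $\supseteq$.} Suppose $f \in \Cont(K)$ satisfies $f \leq 0$ on $K$ and $f = 0$ on $\operatorname{supp}(\mu)$. For any $\nu \in \MM_+$, I would split $\pairing{\nu-\mu}{f} = \pairing{\nu}{f} - \pairing{\mu}{f}$. The first term $\pairing{\nu}{f} = \int_K f \, \dd\nu$ is $\leq 0$ because $f \leq 0$ and $\nu \geq 0$. For the second term, since $f$ vanishes on $\operatorname{supp}(\mu)$ and $\mu$ is concentrated on its support, $\pairing{\mu}{f} = \int_K f \, \dd\mu = \int_{\operatorname{supp}(\mu)} f \, \dd\mu = 0$. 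Hence $\pairing{\nu-\mu}{f} \leq 0$, so $f$ belongs to the normal cone.

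\medskip

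\noindent\textbf{The inclusion $\subseteq$.} Conversely, suppose $f \in N_{\MM_+}(\mu)$. First I would extract $f \leq 0$ on $K$: for any fixed $x \in K$, choosing $\nu = \mu + t \delta_x$ with $t > 0$ gives $\pairing{\nu-\mu}{f} = t f(x) \leq 0$, whence $f(x) \leq 0$. Next I would extract $f = 0$ on $\operatorname{supp}(\mu)$: taking $\nu = 0 \in \MM_+$ gives $-\pairing{\mu}{f} \leq 0$, i.e. $\int_K f \, \dd\mu \geq 0$; but $f \leq 0$ and $\mu \geq 0$ force $\int_K f \, \dd\mu \leq 0$, so $\int_K f \, \dd\mu = 0$. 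Combining $f \leq 0$ with $\int_K f \, \dd\mu = 0$ means $f = 0$ $\mu$-almost everywhere, and since $f$ is continuous this upgrades to $f = 0$ on all of $\operatorname{supp}(\mu)$ (any point of the support has every neighbourhood of positive $\mu$-measure, so continuity rules out $f \neq 0$ there).

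\medskip

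\noindent The main subtlety I anticipate is the last upgrade from ``$\mu$-almost everywhere'' to ``everywhere on $\operatorname{supp}(\mu)$'': one must invoke continuity of $f$ together with the definition of support. If $f(x_0) < 0$ for some $x_0 \in \operatorname{supp}(\mu)$, then by continuity $f < 0$ on an open neighbourhood $N$ of $x_0$, and by definition of the support $\mu(N) > 0$, so $\int_N f \, \dd\mu < 0$, contradicting $\int_K f \, \dd\mu = 0$ (since $f \leq 0$ everywhere the integral over $N$ cannot be compensated). Everything else is a routine application of the testing measures $\mu + t\delta_x$ and $\nu = 0$, and the hypothesis $\mu \in \operatorname{dom}(\loss)$ is in fact not needed — the characterisation holds for any $\mu \in \MM_+$.
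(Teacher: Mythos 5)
Your proof is correct and takes essentially the same route as the paper's: the same test measures $\nu = \mu + t\delta_x$ and $\nu = 0$ for the forward inclusion, and the same direct verification for the reverse one. The only difference is that you spell out the upgrade from $\int_K f \, \dd\mu = 0$ to $f = 0$ on $\operatorname{supp}(\mu)$ via continuity and the definition of support (and note that $\mu \in \operatorname{dom}(\loss)$ is not needed), steps the paper leaves implicit.
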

\begin{proof} Let $f \in \Cont(K)$ be in $N_{\MM_+}(\mu)$, i.e., it satisfies $\pairing{\nu-\mu}{f} \leq 0$ for all $\nu$ in $\MM_+$.
  First, we choose $\nu = \mu + \delta_{x}$ (with $\delta_x$ the Dirac mass at $x$), which yields $f(x) \leq 0$, so we must have $f \leq 0$ on $K$.
  Then with $\nu = 0$, we find $\pairing{\mu}{f} \geq 0$.
  Since we also have $f \leq 0$, $\scl{\mu}{f} = 0$ leading to $f=0$ on $\operatorname{supp}(\mu)$.

  The reverse is also true: if $f \leq 0$ on $K$ and $f = 0$ on $\operatorname{supp}(\mu)$, then $\pairing{\nu-\mu}{f} \leq 0$ for all $\nu$ in $\MM_+$, which gives $f \in N_{\MM_+}(\mu)$. 
\end{proof}

\begin{proof}[Proof of \autoref{prop:KKT}]
%

Since $f$ is differentiable on $\operatorname{dom}(\loss)$ and convex on the convex set $\MM_+$, a point $\mu^\star \in \operatorname{dom}(\loss)$ is optimal if and only if 
\[ \nabla \loss(\mu^\star) \in - N_{\MM_+}(\mu^\star).\]

From the characterisation of $N_{\MM_+}(\mu)$ given below in~\autoref{charact_normal_cone}, and the fact that $\nabla \loss(\mu) = A^* \lambda(A \mu)$, the optimality condition exactly amounts to the conditions~\eqref{KKT}. 
\end{proof}

\begin{remark}
\label{dual_opt}
An alternative proof of these optimality conditions can be obtained by considering instead the equivalent problem of minimising $d(\data||w)$ with $w$ ranging over the cone $\imcone$. The cone has a non-empty relative interior which proves that Slater's condition is fulfilled. Since the problem is convex, KKT conditions are equivalent to optimality for $\min_{w \in \imcone} d(\data||w)$~\cite{Boyd2004}. 

The Lagrange dual is given by $g(\lambda):= \min d(\data||w) - \pairing{\lambda}{w}$ for $\lambda \in \imcone^\ast$.
A straightforward computation leads to
\begin{equation}
\label{dual_fct}
g(\lambda) = \sum_{i=1}^{\ndet} \data_i \log(1-\lambda_i),
\end{equation}
for $\lambda \leq 1$, with value $-\infty$ if there exists $i \in \operatorname{supp}(\data)$ such that $\lambda_i = 1$.

The KKT conditions for a primal optimal $w^\star$ and dual optimal $\lambda^\star$ write 
\begin{enumerate}[label=(\roman*)]
\item $w^\star \in \imcone$, $\lambda^\star \in \imcone^\ast$
\item\label{it:prodcond} $\scl{\lambda^\star}{w^\star} = 0$,
\item $\nabla_{w} d(\data|| w^\star) - \lambda^\star = 0$ (equivalent to \( \lambda^\star = \lambda(w^{\star}) = 1 -\frac{\data}{w^\star}\))
\end{enumerate}

A measure $\mu^\star$ is then optimal if and only if $A \mu^\star = w^\star$ for $w^\star$ primal optimal.
Since \(({\lambda^{\star}},{A \mu^{\star}}) = \pairing{\mu^{\star}}{A^* \lambda^{\star}}\) (by definition of $A^*$),
the condition~\ref{it:prodcond} thus becomes
\[
  \pairing{\mu^{\star}}{A^* \lambda^{\star}} =  \int_K A^* \lambda^\star \, d \mu^\star = 0
  .
\]
Since $\lambda^\star \in \imcone^\ast$, $A^* \lambda^\star \geq 0$ over $K$. Thus, we must have $A^* \lambda^\star = 0$ on $\operatorname{supp}(\mu^\star)$ for the above integral to vanish. All in all, we exactly recover the conditions~\eqref{KKT_}, with the additional interpretation that $\lambda(A \mu^\star)$ is a dual optimal variable.
\end{remark}

With these notations concerning the dual problem now set, let us prove that the dual problem has a unique maximiser $\lambda^\star$.
\begin{lemma}
\label{unique_dual}
The dual problem \[\max_{\lambda \in \imcone^\ast} g(\lambda)\] has a unique maximiser.
\end{lemma}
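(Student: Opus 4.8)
The plan is to exploit concavity of the dual objective together with the strict concavity of the logarithm. First I would record that $\imcone^\ast$ is convex and that
\[
  g(\lambda) = \sum_{i=1}^{\ndet}\data_i\log(1-\lambda_i)
\]
is concave on its effective domain, being a non-negative combination of the concave maps $\lambda\mapsto\log(1-\lambda_i)$ precomposed with affine coordinate projections; hence the set of maximisers is a convex subset of $\imcone^\ast$. Existence of a maximiser is already available, since Slater's condition holds and strong duality is in force (see \autoref{dual_opt}), with the primal optimum attained. It therefore remains to show that this convex maximiser set is a singleton.

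The core of the argument is a strict-concavity comparison along segments. Suppose $\lambda^1$ and $\lambda^2$ are two maximisers and set $\lambda(t) := (1-t)\lambda^1 + t\lambda^2$, which stays feasible by convexity of $\imcone^\ast$. Writing $\phi(t) := g(\lambda(t))$, both endpoints are maximal and $\phi$ is concave and bounded above by the optimal value, so $\phi$ is constant on $[0,1]$. A direct computation gives $\phi''(t) = -\sum_{i\in\suppdata}\data_i\,(\lambda^2_i-\lambda^1_i)^2/(1-\lambda_i(t))^2 \leq 0$, and $\phi''\equiv 0$ forces $\lambda^1_i = \lambda^2_i$ for every $i\in\suppdata$. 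Thus any two maximisers coincide on all coordinates indexed by $\suppdata$.

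The remaining, and genuinely delicate, point is to pin down the coordinates $i\notin\suppdata$, on which $g$ does not depend at all, so that strict concavity says nothing. Here I would pass through the primal: by strong duality together with the stationarity and complementary-slackness conditions recorded in \autoref{dual_opt}, any dual maximiser $\lambda^\star$ must satisfy $\lambda^\star = \lambda(w^\star) = 1 - \data/w^\star$ for a primal optimal $w^\star = A\mu^\star$, which by the convention $\lambda_i = 1$ when $\data_i = 0$ forces $\lambda^\star_i = 1$ exactly on the indices $i\notin\suppdata$. Combined with the previous paragraph, this determines $\lambda^\star$ on every coordinate and yields uniqueness.

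I expect this last step to be the main obstacle. It is precisely the place where the absence of strict concavity of $g$ in the off-support directions must be compensated by the constraint $\lambda\in\imcone^\ast$ (equivalently, by the primal stationarity relation $\lambda^\star = 1-\data/w^\star$). One must verify that the optimality conditions genuinely \emph{force} $\lambda^\star_i = 1$ on $i\notin\suppdata$ rather than merely permitting it; if the off-support coordinates could slide freely within $\imcone^\ast$ without leaving the maximiser set, uniqueness would break down, so the argument must really use that every dual optimum arises from a primal optimum through the map $w\mapsto\lambda(w)$.
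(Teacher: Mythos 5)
Your first two steps are correct, and your second step is a genuinely different route from the paper's: you get agreement of any two dual maximisers on the coordinates of $\suppdata$ by strict concavity of $g$ along segments, whereas the paper never argues on the dual side at all --- it shows (following~\cite{Mair1996}) that the primal image $A\mu^\star$ is the same for all optimal measures, by integrating the optimality conditions of \autoref{prop:KKT} for two optimal measures $\mu,\nu$ against each other, which yields $\sum_{i=1}^\ndet \data_i (A\nu)_i/(A\mu)_i \leq 1$ and, symmetrically, $\sum_{i=1}^\ndet \data_i (A\mu)_i/(A\nu)_i \leq 1$, hence $\sum_{i=1}^\ndet \data_i (c_i-1)^2/c_i \leq 0$ for $c = A\mu/A\nu$. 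Both arguments exploit the same strict convexity/concavity, and, importantly, both control \emph{only} the coordinates in $\suppdata$.

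The gap is exactly where you feared: step 3 fails. For $i \notin \suppdata$ the primal optimum generally has $w_i^\star = 0$, a boundary point of the domain of $w \mapsto d(\data||w) - \pairing{\lambda^\star}{w}$, so the correct stationarity condition there is the inequality $1 - \lambda_i^\star \geq 0$, not the equality $\lambda_i^\star = 1$; the convention $\lambda_i(w) = 1$ for $i \notin \suppdata$ in \eqref{eq:deflambda} is a definition, not a consequence of optimality, and membership in $\imcone^\ast$ does not force it either. Concretely, take $K = [0,1]$, $\ndet = 3$, $a_1 = (1-x)/2$, $a_2 = (1-x^2)/2$, $a_3 = x(1+x)/2$ (non-negative, linearly independent, summing to one) and $\data = (1,0,0)$, which lies outside $\imcone$: the set of dual maximisers is exactly the segment $\{(-1,1,t)\,:\, t \in [0,1]\}$, so the statement of \autoref{unique_dual}, read literally, fails here. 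This is not a defect of your strategy alone: the paper's own proof has the same blind spot, since its primal argument pins down nothing off $\suppdata$ (terms with $\data_i = 0$ vanish from $\sum_i \data_i (c_i-1)^2/c_i$). What is actually true, what the paper's argument proves, and what \autoref{cor_sparse} really uses, is the weaker statement that the \emph{canonical} dual variable $\lambda(A\mu^\star)$ --- with off-support entries set to $1$ by convention --- is independent of the choice of optimal $\mu^\star$. Your step 2, combined with the fact from Remark~\ref{dual_opt} that every $\lambda(A\mu^\star)$ is a dual maximiser, proves precisely this. So the repair is to replace ``any dual maximiser satisfies $\lambda^\star = 1 - \data/w^\star$'' by ``any dual maximiser of the form $\lambda(A\mu^\star)$ has off-support entries equal to $1$ by definition'', and to weaken the conclusion to uniqueness within that class, equivalently uniqueness of the restriction of dual maximisers to the coordinates of $\suppdata$.
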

\begin{proof}
The idea is to go back the the primal problem by using the identity 
$\lambda^\star = 1 -\frac{y}{w^\star}$ for an optimal pair $(w^\star, \lambda^\star)$.
Since $w^\star$ relates to an optimal measure $\mu^\star$ by $A \mu^\star = w^\star$, we are done if we prove that $\setc{A \mu^\star}{\mu^\star \, \text{optimal}}$ is reduced to a singleton.
This fact is proved in~\cite{Mair1996}-[Theorem 4.1], and we here gather the main ideas for completeness.
For two optimal measures $\mu$ and $\nu$, we integrate the first KKT condition of~\eqref{KKT_} on the support of $\nu$ to uncover
  \[ \sum_{i=1}^\ndet y_i \frac{(A \nu)_i}{(A \mu)_i} \leq 1,\]
  and we may of course exchange the roles of $\mu$ and $\nu$ in this inequality.

  Suppose now that a vector $c \in \R^\ndet$ with $c_i = 0$ for $i \in \operatorname{supp}(y)$ satisfies both $\sum_{i=1}^\ndet y_i c_i \leq 1$ and $\sum_{i=1}^\ndet y_i (1/c_i) \leq 1$.
  From that, one obtains
  $\sum_{i=1}^{\ndet} y_i \frac{(c_i -1)^2}{c_i} \leq 0$,
  from which we conclude that $c_i = 1$ for all $i$.
  Applying this to $c = \frac{A \mu}{A \nu}$, the result is proved.
\end{proof}

\subsection{Case $\data \notin \imcone$}
When the data $\data$ is not in the cone $\imcone$, optimality conditions imply sparsity of any optimal measure.

\begin{corollary}

\label{cor_sparse}
Assume that $\data \notin \imcone $. Then any $\mu^\star$ minimiser of \eqref{eq:optprob} is sparse, in the following sense
\beq
\label{sparse}
\operatorname{supp}(\mu^\star) \subset  \argmin \bigg(\sum_{i=1}^{\ndet} \lambda_i^\star a_i\bigg),
\eeq
where $\lambda^\star$ is the unique maximiser for the dual problem, which satisfies $\lambda^\star \neq 0$. 
\end{corollary}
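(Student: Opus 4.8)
The plan is to read the conclusion directly off the optimality conditions of \autoref{prop:KKT}, reinterpreted through the dual variable. First I would invoke \autoref{prop:massone} to note that any optimal $\mu^\star$ has unit mass, so in particular $\operatorname{supp}(\mu^\star) \neq \emptyset$. Next I would recall, from \autoref{dual_opt} and \autoref{unique_dual}, that for \emph{every} optimal $\mu^\star$ the vector $\lambda(A\mu^\star)$ coincides with the unique dual maximiser $\lambda^\star$; thus the two KKT conditions \eqref{KKT} may be rewritten with $\lambda^\star$ in place of $\lambda(A\mu^\star)$, giving $A^\ast\lambda^\star = \sum_{i=1}^{\ndet} \lambda_i^\star a_i \geq 0$ on $K$ and $A^\ast\lambda^\star = 0$ on $\operatorname{supp}(\mu^\star)$.

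The inclusion \eqref{sparse} is then essentially a one-line consequence. Since $\operatorname{supp}(\mu^\star)$ is nonempty and $A^\ast\lambda^\star$ vanishes there while remaining $\geq 0$ on all of $K$, the minimum of the continuous function $x \mapsto \sum_{i=1}^{\ndet}\lambda_i^\star a_i(x)$ over $K$ equals $0$ and is attained exactly on its zero set. Every point of $\operatorname{supp}(\mu^\star)$ lies in this zero set, hence in $\argmin\big(\sum_{i=1}^{\ndet}\lambda_i^\star a_i\big)$, which is precisely \eqref{sparse}.

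The genuinely substantive point is to show $\lambda^\star \neq 0$, and this is where the hypothesis $\data \notin \imcone$ enters. I would argue by contradiction: if $\lambda^\star = 0$, then by the strong duality established in \autoref{dual_opt} the optimal primal value equals the optimal dual value $g(\lambda^\star) = g(0) = \sum_{i=1}^{\ndet} \data_i \log(1) = 0$. Since $d(\data||w) = 0$ holds if and only if $w = \data$, the primal optimum $w^\star = A\mu^\star$ must equal $\data$, forcing $\data \in \imcone$ and contradicting the hypothesis. An equivalent, duality-free version notes that $\lambda^\star = 0$ would require $(A\mu^\star)_i = \data_i$ for all $i \in \suppdata$ and $\lambda_i^\star = 1$ for $i \notin \suppdata$ by the convention in \eqref{eq:deflambda}; the latter already forbids $\lambda^\star = 0$ unless $\suppdata$ is full, in which case the former again yields $A\mu^\star = \data \in \imcone$.

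I expect the only real subtlety to be the bookkeeping in this last step: one must track the convention $\lambda_i^\star = 1$ for $i \notin \suppdata$ and carefully use the identification $\lambda^\star = \lambda(A\mu^\star)$ (hence its uniqueness and independence of the chosen optimum from \autoref{unique_dual}), so that a single nonzero certificate $\lambda^\star$ serves simultaneously for every minimiser $\mu^\star$. The inclusion itself requires nothing beyond \eqref{KKT} and the nonemptiness of the support.
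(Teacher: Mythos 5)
Your proposal is correct and follows essentially the same route as the paper's proof: read the inclusion \eqref{sparse} off the KKT conditions of \autoref{prop:KKT}, identify $\lambda(A\mu^\star)$ with the unique dual maximiser via \autoref{unique_dual}, and rule out $\lambda^\star = 0$ by showing it would force $\data = A\mu^\star \in \imcone$ through the definition \eqref{eq:deflambda}. Your handling of the convention $\lambda_i^\star = 1$ for $i \notin \suppdata$ is in fact slightly more careful than the paper's one-line dismissal, but it is the same argument.
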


\begin{proof}
  Given an optimal $\mu^\star$, conditions \eqref{KKT} imply $\operatorname{supp}(\mu^\star) \subset  \argmin (A^\ast \lambda(A \mu^\star))$, with $\lambda(A \mu^\star) = 1- \frac{y}{A \mu^\star}$, where we used $\sum_{i=1}^{\ndet} a_i = 1$. The uniqueness of maximisers for the dual problem established in~\autoref{unique_dual}, allows us to write $\lambda(A \mu^\star) = \lambda^\star$. 



  The vector $\lambda^\star$ must be non-zero: if it weren't the case, then, using the definition \eqref{eq:deflambda},
  that would imply $\data \in \imcone$, which would contradict our initial assumption.

\end{proof}


\begin{remark}
  Why does condition \eqref{sparse} imply sparsity?
  Let $\lambda^\star$ be defined as in the previous theorem, and define the function $\varphi^\star \coloneqq A^* \lambda^\star= \sum_{i=1}^{\ndet} \lambda_i^\star \, a_i$.
 We know from \autoref{prop:KKT} that both $\varphi^\star \geq 0$ and $\operatorname{supp}(\mu^{\star}) \subset \argmin(\varphi^\star)$.
 
 Assuming that the $a_i$'s are linearly independent in $\Cont(K)$, $\varphi^\star$ cannot vanish identically since $\lambda^\star \neq 0$. 
  Supposing further that that for all~$i$,~$a_i \in\Cont^2(K)$, we have
    \[\operatorname{supp}(\mu^{\star}) \cap \operatorname{int}(K) \subset \mathcal{S} \coloneqq \setc{x\in K}{\nabla\varphi^\star(x) = 0}
    .
    \]
We make the final assumption that the Hessian of $\varphi^\star$ is invertible at the points $x \in \argmin (\varphi^\star)$, which is equivalent to its positive definiteness since these are minimum points of $\varphi^\star$. This implies that $\mathcal{S}$ consists of \emph{isolated points}.
 Consequently, the restriction to $\operatorname{int}(K)$ of any optimal solution $\mu^{\star}$ is a sum of Dirac masses.

 Note that all the above regularity assumptions hold for \emph{generic} functions $a_i$.
One case where all of them are readily satisfied is when the functions $a_i$ are analytic with $K$ connected.

 In fact, if we go further and assume that $\argmin(\varphi^\star)$ is reduced to a singleton $\bar x$, then the set of optimal measures is itself a singleton, given by the Dirac mass at $\bar x$.
 
\end{remark}

\begin{remark}
\label{super_sparse} 
We can exhibit a case where only Dirac masses are optimal.
Suppose that only $\data_{\iz} = 1$.
Then the function $\loss$ for measures $\mu$ such that $\mu(K) = 1$ is simply $\loss(\mu) = 1 - \log(\scl{\mu}{a_{\iz}})$.
One can directly check that a minimiser $\mu^\star$ necessarily satisfies $\operatorname{supp}(\mu^\star) \subset \argmax(a_{\iz})$, in agreement with condition~\eqref{sparse}.
If this set is discrete, then $\mu^\star$ is a sum of Dirac masses located at these points.
Note that such a data point $\data$ is outside  the cone $\imcone$ if and only if $\max(a_{\iz}) < 1$.
If not, it lies on the boundary of the cone, showing that some boundary points might lead to sparse minimisers as well.
\end{remark}

\subsection{Moment matching problem and case $\data \in   \operatorname{int}(\imcone)$}

When the data $\data$ is in the cone $\imcone$, searching for minimisers of \eqref{eq:optprob} is equivalent to solving $A \mu = \data$ for $\mu \in \MM_+$.
For the applications, we are particularly interested in the existence of absolutely continuous solutions. We make use of the results of~\cite{Georgiou2005}, which addresses this problem. 

We shall use the assumption:
\beq
\label{lin_ind}
\text{the functions $a_i$, $i=1, \ldots, \ndet$ are linearly independent in $\Cont(K)$}.
\eeq
Under~\eqref{lin_ind}, $\imcone$ has non-empty interior. 


We now recall a part of Theorem 3 of~\cite{Georgiou2005} which will be sufficient of our purpose.
\begin{theorem}[\cite{Georgiou2005}]
\label{exist_cont}
Assume that $\imcone$ and its dual cone $\imcone^\ast$ have non-empty interior.
Then for any $\data \in \operatorname{int}(\imcone)$, there exists $\mu^\star$ which is absolutely continuous, with positive and continuous density, such that $A \mu^\star = \data$.
\end{theorem}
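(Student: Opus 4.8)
\emph{The approach.} The plan is to solve the moment‑matching problem $A\mu = \data$ by a maximum‑entropy duality argument, looking for the density within the Gibbs family $f_\lambda := e^{A^\ast\lambda - 1}$, $\lambda \in \RR^{\ndet}$. Such a density is automatically \emph{positive and continuous}, because $A^\ast\lambda = \sum_{i} \lambda_i a_i$ is continuous on $K$; this is precisely the feature that makes the exponential ansatz convenient. The motivation is that $f_\lambda$ is the unconstrained minimiser of the Boltzmann--Shannon entropy $\int_K f\log f\,\dd x$ under the constraints $\int_K a_i f\,\dd x = \data_i$, with $\lambda$ playing the role of the Lagrange multipliers. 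Matching the moments then reduces to choosing $\lambda$ so that $\int_K a_i f_\lambda\,\dd x = \data_i$ for every $i$.

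\emph{Reduction to attainment.} Concretely, I would introduce the concave dual functional on $\RR^{\ndet}$,
\[ g(\lambda) := \langle\lambda, \data\rangle - \int_K e^{A^\ast\lambda - 1}\,\dd x, \]
which is finite and smooth everywhere (since $A^\ast\lambda$ is bounded on the compact $K$), with gradient $\nabla g(\lambda) = \data - \big(\int_K a_i\, e^{A^\ast\lambda-1}\,\dd x\big)_{1\le i\le \ndet}$ and Hessian $-\int_K a\,a^\top e^{A^\ast\lambda - 1}\,\dd x$, where $a = (a_1,\dots,a_{\ndet})^\top$. The Hessian is negative definite as soon as the $a_i$ are linearly independent, i.e. under \eqref{lin_ind} (equivalently, $\imcone$ has non-empty interior), so $g$ is then strictly concave, which gives uniqueness; for mere existence this is not needed. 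The whole theorem now collapses to a single statement: \emph{$g$ attains its maximum at some $\lambda^\star$}. Indeed, at such a point $\nabla g(\lambda^\star) = 0$ reads $\int_K a_i f_{\lambda^\star}\,\dd x = \data_i$, so $\mu^\star := f_{\lambda^\star}\,\dd x$ is absolutely continuous, has positive continuous density $f_{\lambda^\star} = e^{A^\ast\lambda^\star - 1}$, and satisfies $A\mu^\star = \data$.

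\emph{Existence via coercivity.} To obtain the maximiser I would show that $-g$ is coercive by examining its behaviour along rays $\lambda = t\omega$, $|\omega| = 1$, $t\to+\infty$, distinguishing two regimes. If $A^\ast\omega > 0$ at some point of $K$, then by continuity $A^\ast\omega$ stays bounded below by a positive constant on a set of positive Lebesgue measure, so $\int_K e^{tA^\ast\omega - 1}\,\dd x$ grows exponentially in $t$ and dominates the linear term $t\langle\omega,\data\rangle$, giving $-g(t\omega)\to+\infty$. If instead $A^\ast\omega \le 0$ on all of $K$ --- equivalently $-\omega \in \imcone^\ast$ by \eqref{eq:propdualcone} --- the exponential integral stays bounded by $e^{-1}\int_K\dd x$, and one is left with $-t\langle\omega,\data\rangle$; here I invoke that $\data \in \operatorname{int}(\imcone)$ forces $\langle\lambda,\data\rangle > 0$ for every $\lambda \in \imcone^\ast\setminus\{0\}$ (an interior point of a closed convex cone pairs strictly positively with every non-zero element of the dual cone), applied to $\lambda = -\omega$, so $\langle\omega,\data\rangle < 0$ and again $-g(t\omega)\to+\infty$. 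Since the recession value of $-g$ is strictly positive in every direction $\omega\neq 0$, the sublevel sets of the convex function $-g$ are bounded, and continuity yields the desired minimiser of $-g$, i.e. a maximiser of $g$.

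\emph{Main obstacle.} The crux is the second regime: in the directions with $A^\ast\omega \le 0$ the entropy term gives no growth, and coercivity rests entirely on the strict inequality $\langle -\omega,\data\rangle > 0$, which is available only because $\data$ lies in the \emph{interior} of $\imcone$. On $\partial\imcone$ there would exist $\lambda\in\imcone^\ast\setminus\{0\}$ with $\langle\lambda,\data\rangle = 0$, coercivity would fail, and indeed boundary data need not admit a continuous density --- consistent with the sparsity phenomenon of \autoref{cor_sparse}. Finally, I would note that Georgiou's original route instead uses the Burg entropy $-\int_K\log f\,\dd x$, producing the density $1/(A^\ast\lambda^\star)$ on the dual domain $\operatorname{int}(\imcone^\ast)$ (where the hypothesis that $\imcone^\ast$ has non-empty interior ensures this domain is non-empty, e.g. $A^\ast\one = \one > 0$ by \eqref{eq:normalisation}); that variant is equally viable but carries the extra burden of keeping $\lambda^\star$ away from $\partial\imcone^\ast$, i.e. of excluding zeros of $A^\ast\lambda^\star$ where the density could blow up --- a cone‑boundary subtlety that the bounded exponential ansatz sidesteps entirely.
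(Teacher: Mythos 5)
Your proposal proves a statement that the paper itself never proves: \autoref{exist_cont} is imported verbatim (``a part of Theorem 3'') from~\cite{Georgiou2005}, and the paper's only work around it is to verify its hypotheses in \autoref{smooth_int} and \autoref{smooth_bd}. So the comparison is not with an internal proof but with a citation, and your argument supplies, correctly, what the paper delegates to the literature. The exponential ansatz $f_\lambda = e^{A^\ast\lambda - 1}$, the dual functional $g(\lambda)=\langle\lambda,\data\rangle-\int_K e^{A^\ast\lambda-1}\,\dd x$ whose critical points solve $A\mu^\star=\data$, and the ray-wise coercivity of the convex function $-g$ are all sound: the dichotomy between directions $\omega$ with $A^\ast\omega>0$ somewhere on $K$ (exponential growth beats the linear term) and directions with $-\omega\in\imcone^\ast$ via \eqref{eq:propdualcone} (where $\langle-\omega,\data\rangle>0$ follows from $\data\in\operatorname{int}(\imcone)$ pairing strictly with nonzero dual vectors) is exhaustive, and strictly positive recession in every direction does bound the sublevel sets of a convex function, hence yields a maximiser. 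Two implicit points are worth making explicit. First, in the first regime you need $\setc{x\in K}{A^\ast\omega(x)\geq c}$ to have positive Lebesgue measure; this holds because the paper's convention is that $K$ is the closure of a bounded domain whenever Lebesgue measure is invoked, so every neighbourhood of a point of $K$ meets $K$ in a set of positive measure. Second, your argument never actually uses the hypothesis that $\imcone^\ast$ has non-empty interior --- not a flaw, since that hypothesis is automatic under the normalisation \eqref{eq:normalisation} (where $A^\ast\one \equiv 1 >0$ on $K$), but it shows your route is slightly more economical than the quoted statement. What each approach buys: the paper buys brevity and an exact attribution; your static variational proof makes the result self-contained, isolates exactly where interiority of the data is needed (consistent with the failure at $\partial\imcone$ and with \autoref{cor_sparse}), and gives as a bonus uniqueness of the representing density within the exponential family under \eqref{lin_ind}. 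One caution: your closing description of the original route in~\cite{Georgiou2005} (Burg entropy, density $1/(A^\ast\lambda^\star)$) should be checked against that reference --- Georgiou's argument there is a one-parameter imbedding (continuation) method rather than a single dual maximisation --- but this side remark is immaterial to the validity of your proof.
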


\begin{lemma}
\label{smooth_int}
Under hypothesis \eqref{lin_ind}, $\imcone$ has non-empty interior, and if $\data \in \operatorname{int}(\imcone)$, there exists an optimal measure $\mu^\star$ which is absolutely continuous with positive and continuous density.
\end{lemma}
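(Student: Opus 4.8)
The plan is to reduce the statement entirely to \autoref{exist_cont}, whose conclusion is exactly the existence of an absolutely continuous solution to $A\mu = \data$ with positive continuous density. To invoke it I must (a) verify its two hypotheses, namely that both $\imcone$ and its dual cone $\imcone^\ast$ have non-empty interior, and (b) argue that any such solution is in fact an optimal measure. Step (b) is immediate from the equivalence recorded after \eqref{likelihood_cone}: since $\data \in \operatorname{int}(\imcone) \subset \imcone$, a measure is optimal for \eqref{eq:optprob} if and only if it solves $A\mu = \data$. So the real content is checking the two interiority conditions, and the heart of the matter is genuinely carried by the cited theorem.

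First I would show that \eqref{lin_ind} forces $\imcone$ to have non-empty interior. If it did not, the convex cone $\imcone$ would be contained in a linear hyperplane $\setc{w \in \RR^\ndet}{\scl{c}{w} = 0}$ for some $c \neq 0$, where here I abuse notation and write $\scl{c}{w}$ for the Euclidean inner product. Testing against the Dirac masses $\mu = \delta_x$, $x \in K$, would give $\sum_{i=1}^{\ndet} c_i a_i(x) = 0$ for every $x$, i.e.\ $A^\ast c \equiv 0$ on $K$, contradicting the linear independence \eqref{lin_ind}. Hence $\imcone$ has non-empty interior. Next I would check the dual cone: by \eqref{eq:propdualcone} the interior of $\imcone^\ast$ consists of those $\lambda$ with $A^\ast \lambda > 0$ on $K$, and the normalisation \eqref{eq:normalisation} says precisely $A^\ast \one = \sum_{i=1}^{\ndet} a_i = 1 > 0$ on $K$. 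Thus $\one \in \operatorname{int}(\imcone^\ast)$, so $\imcone^\ast$ has non-empty interior as well. (Only the weaker positivity assumption \eqref{positive} is actually used here.)

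With both interiority conditions established and $\data \in \operatorname{int}(\imcone)$ by hypothesis, \autoref{exist_cont} yields a measure $\mu^\star$ that is absolutely continuous with positive and continuous density and satisfies $A\mu^\star = \data$. By step (b) above, this $\mu^\star$ is optimal, which is exactly the claimed absolutely continuous minimiser. I do not expect any serious obstacle: the argument is essentially bookkeeping around a cited result. The only point demanding care — and the one I would flag explicitly — is the verification that $\imcone^\ast$ has non-empty interior, since this is where the normalisation/positivity assumption \eqref{positive}--\eqref{eq:normalisation} is indispensable and cannot be dropped.
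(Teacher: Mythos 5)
Your proof is correct and follows essentially the same route as the paper: reduce to \autoref{exist_cont}, note that optimality for $\data \in \imcone$ is equivalent to solving $A\mu^\star = \data$, and verify the dual-cone interiority via the normalisation $\sum_{i=1}^{\ndet} a_i = 1$. The only difference is that you supply an explicit argument (hyperplane plus Dirac-mass testing) for the claim that \eqref{lin_ind} forces $\imcone$ to have non-empty interior, a fact the paper simply asserts in the sentence preceding the lemma.
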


\begin{proof}
  This is a direct consequence of \autoref{exist_cont}. We just need to check that $\imcone^\ast$ has non-empty interior.
  Using the characterisation of the dual cone \eqref{eq:propdualcone}, this is straightforward since $\sum_{i=1}^{\ndet} a_i = 1$. 
\end{proof}

\subsection{Case $\data \in \partial \imcone$}
The previous approach settles the case where the data $\data$
is in the interior $\operatorname{int}(\imcone)$ of the image cone,
which poses the natural question of its boundary $\partial \imcone$.
It routinely happens in practice that some components of the data $\data$ are zero, which means that the vector $y$ lies at the border of the cone, \(\data \in \partial \imcone\).
Upon changing the compact, a further use of the results of~\cite{Georgiou2005} shows that if
the support of the data $\suppdata$ is not too small
(see the precise condition~\eqref{positive_2} below),
the situation is the same as for $\operatorname{int}(\imcone)$.

The idea is to remove all the zero components of the data vector $\data$, consider only the positive ones and try to solve $\scl{\mu^\star}{a_i} = \data_i$ for $i \in \suppdata$, while making sure that the measure $\mu^\star$ has a support such that $\scl{\mu^\star}{a_i} = 0$ for $i \notin \suppdata$.

We denote $\tilde{\ndet}:= \#(\suppdata)$, $\tilde{K} := K  \backslash \cup_{i \notin \suppdata} a_i^{-1}(\{0\})$, $\tilde{\data} = (\data_i)_{i \in \suppdata}$, and finally the reduced operator,
\begin{align}
\tilde{A} \colon   \mathcal{M}(\tilde{K}) &\longrightarrow  \mathbb{R}^{\tilde{\ndet}} \\
\mu  & \longmapsto \bigg(\int_K a_i \mu\bigg)_{i \in \suppdata},
\end{align}
which has an associated cone
$\tilde{A}(\mathcal{M}_+(\tilde{K}))$. 

We will need the assumptions 
\beq
\label{lin_ind_2}
\text{the functions $a_i$, $i \in \suppdata$ are linearly independent in $\mathcal{C}(\tilde{K})$}, 
\eeq
and 
\beq
\label{positive_2}
\sum_{i \in \suppdata} a_i > 0 \text{ on } \tilde{K}.
\eeq

\begin{proposition}
\label{smooth_bd}
We assume \eqref{lin_ind_2} and \eqref{positive_2}. $\tilde{A}(\mathcal{M}_+(\tilde{K}))$ has non-empty interior and we assume
\[\tilde{\data} \in \operatorname{int}\paren[\big]{\tilde{A}(\mathcal{M}_+(\tilde{K}))}.\] 
Then there exists an absolutely continuous solution  $\mu^\star$ of $A \mu = \data$ with positive and continuous density (on $\tilde K$). 
\end{proposition}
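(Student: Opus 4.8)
The plan is to reduce the statement to a single application of \autoref{exist_cont} on the smaller compact $\tilde{K}$, and then to lift the resulting measure back to $K$ by extension by zero. The feature of $\tilde{K}$ that I would exploit is that every $a_i$ with $i \notin \suppdata$ vanishes on it (this is exactly why those indices are removed when carving $\tilde{K}$ out of $K$). Consequently, any non-negative measure supported in $\tilde{K}$ automatically satisfies the constraints $\int_K a_i \, \dd\mu = 0 = \data_i$ for $i \notin \suppdata$, so that solving $A\mu = \data$ over $\mathcal{M}_+(K)$ collapses to the moment-matching problem $\tilde{A}\mu = \tilde{\data}$ over $\mathcal{M}_+(\tilde{K})$. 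Since $\tilde{K}$ is a closed subset of the compact $K$, it is itself compact and $\tilde{A}$, $\tilde{A}^\ast$ behave exactly as the operators analysed in \autoref{sec:coneadjoint}.

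First I would verify that the two hypotheses of \autoref{exist_cont} hold for $\tilde{A}$ on $\tilde{K}$, namely that both the cone $\tilde{A}(\mathcal{M}_+(\tilde{K}))$ and its dual cone have non-empty interior. The non-empty interior of the cone itself is a consequence of the linear independence assumption \eqref{lin_ind_2}, by the very same argument that underlies the remark following \eqref{lin_ind}: if the image cone were contained in a hyperplane through the origin, there would be a non-zero $\lambda$ with $\sum_{i \in \suppdata} \lambda_i a_i = 0$ on $\tilde{K}$ (testing against Dirac masses), contradicting \eqref{lin_ind_2}. For the dual cone I would transpose the characterisation \eqref{eq:propdualcone} to $\tilde{A}$: its interior consists of the vectors $\lambda \in \mathbb{R}^{\tilde{\ndet}}$ with $\tilde{A}^\ast \lambda = \sum_{i \in \suppdata} \lambda_i a_i > 0$ on $\tilde{K}$. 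Assumption \eqref{positive_2} says precisely that $\lambda = \one$ qualifies, so the dual cone has non-empty interior. This step is the exact analogue of the proof of \autoref{smooth_int}, with the normalisation $\sum a_i = 1$ now replaced by the weaker positivity \eqref{positive_2}.

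With these interior conditions in hand and the standing hypothesis $\tilde{\data} \in \operatorname{int}(\tilde{A}(\mathcal{M}_+(\tilde{K})))$, \autoref{exist_cont} produces an absolutely continuous measure $\mu^\star$ on $\tilde{K}$, with positive and continuous density, such that $\tilde{A}\mu^\star = \tilde{\data}$. I would then regard $\mu^\star$ as a measure on all of $K$, extended by zero outside $\tilde{K}$, and check the full system $A\mu^\star = \data$ coordinate by coordinate: for $i \in \suppdata$ one has $\int_K a_i \, \dd\mu^\star = \int_{\tilde{K}} a_i \, \dd\mu^\star = \data_i$ directly, while for $i \notin \suppdata$ the integrand $a_i$ vanishes on $\tilde{K} \supseteq \operatorname{supp}(\mu^\star)$, so $\int_K a_i \, \dd\mu^\star = 0 = \data_i$. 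Extension by zero preserves both non-negativity and absolute continuity (relative to the reference measure on $\tilde{K}$), which yields the desired solution.

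The one genuinely delicate point is the dual-cone interior condition on the reduced compact, as this is where \eqref{positive_2} is indispensable: without it $\tilde{A}^\ast \lambda$ could fail to be strictly positive somewhere on $\tilde{K}$ and \autoref{exist_cont} would not apply. One should also be careful that ``absolutely continuous with positive density'' is meant relative to the natural reference measure on $\tilde{K}$ — hence the qualifier ``on $\tilde{K}$'' in the statement — so the density claim lives on $\tilde{K}$ rather than on all of $K$. Everything else is routine: confirming compactness of $\tilde{K}$, the identification of $\tilde{A}^\ast$, and the bookkeeping of the extension.
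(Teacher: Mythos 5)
Your proof is correct and follows essentially the same route as the paper's: both reduce to an application of \autoref{exist_cont} for the reduced operator $\tilde{A}$ on the compact $\tilde{K}$ (with the dual-cone interior supplied by \eqref{positive_2}), and then extend the resulting measure by zero to all of $K$. Your write-up is in fact slightly more thorough, since you make explicit the non-empty interior of the primal cone via \eqref{lin_ind_2} and the coordinate-by-coordinate check that the extension solves $A\mu = \data$, both of which the paper leaves implicit.
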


\begin{proof}
  We make use of \autoref{exist_cont}.
  In order to do so, we need the dual cone of $\tilde{A}(\mathcal{M}_+(\tilde{K}))$  to have a nonempty interior, which \eqref{positive_2} entails.
  Then we may build an absolutely continuous $\tilde{\mu}^\star \in  \mathcal{M}_+(\tilde{K})$ with positive and continuous density, such that $ \tilde{A} \tilde{\mu}^\star = \tilde{\data}$.
We then extend $\tilde{\mu}^\star$ to a measure on the whole of $K$ by defining $\mu^\star$ to equal $\tilde{\mu}^\star$ on $\tilde{K}$ with support contained in $\tilde{K}$, namely $\mu^\star(B) = \tilde{\mu}^\star(B \cup \tilde{K})$ for any Borel subset $B$ of $K$. Then $\mu^\star$ clearly solves $A \mu =  \data$ and thus minimises $\loss$. 
\end{proof}

Note that \autoref{smooth_int} is a particular case of \autoref{smooth_bd}, but we believe this presentation makes the role of $\operatorname{int}(\imcone)$ and $\partial \imcone$ clearer. 

Let us now finish this section by proving that not any point of the boundary may be associated to absolutely continuous measures. We denote $S$ the simplex in $\mathbb{R}^{\ndet}$, i.e.,
\beq
\label{simplex}
S:= \setc*{w \in \mathbb{R}^{\ndet}, w \geq 0}{\sum_{i=1}^{\ndet} w_i = 1}.
\eeq

\begin{proposition}
\label{extremal}
Assume that $\data \in \partial \imcone$ is an extremal point of $\imcone \cap S$. Then any measure satisfying $A \mu = \data$ is a Dirac mass.
\end{proposition}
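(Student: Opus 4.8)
The plan is to identify $\imcone \cap S$ with the image of the probability measures under $A$, and then to exploit the elementary fact that a probability measure whose barycenter is an extreme point of a compact convex set must be the Dirac mass at that point. First I would set up the reduction. Because of the normalisation \eqref{eq:normalisationop}, for any $\mu \in \MM_+$ we have $\sum_{i=1}^{\ndet}(A\mu)_i = \scl{\mu}{\sum_i a_i} = \scl{\mu}{1} = \mu(K)$, so $A\mu \in S$ if and only if $\mu(K)=1$. Writing $P := \setc{\mu \in \MM_+}{\mu(K)=1}$ for the set of probability measures on $K$, this yields $\imcone \cap S = A(P)$. Introducing the continuous moment map $\Phi \colon K \to \RR^{\ndet}$, $\Phi(x) := (a_i(x))_{i} = A\delta_x$, one has $A\mu = \int_K \Phi \, d\mu$, i.e. $A\mu$ is the barycenter of the pushforward $\Phi_\ast\mu$, which is a probability measure carried by the compact set $\Phi(K)$. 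Hence $A(P) = \operatorname{conv}(\Phi(K))$ is compact and convex, and by hypothesis $\data$ is one of its extreme points.

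The core step is then as follows. Let $\mu \in \MM_+$ satisfy $A\mu = \data$; since $\data \in S$ we have $\sum_i \data_i = 1$, so $\mu(K)=1$ and $\mu \in P$. Consider $\nu := \Phi_\ast \mu$, a probability measure on $C := \operatorname{conv}(\Phi(K))$ whose barycenter is the extreme point $\data$. I would invoke the standard fact that a probability measure on a compact convex set whose barycenter is an extreme point is the Dirac at that point: if $\nu(\{\data\}) = t < 1$, splitting $\nu = t\,\delta_{\data} + (1-t)\,\nu'$ with $\nu'$ a probability measure not charging $\data$ gives $\data = t\,\data + (1-t)\,b(\nu')$, whence $\data = b(\nu') \in \operatorname{conv}(\Phi(K)\setminus\{\data\})$, contradicting extremality. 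Therefore $\nu = \delta_{\data}$, which means precisely that $\mu$ is supported on the fibre $\Phi^{-1}(\data) = \setc{x \in K}{(a_i(x))_i = \data}$.

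It remains to collapse this fibre to a single point in order to conclude $\mu = \delta_{\bar x}$, and this is the step I expect to be the main obstacle. Concentration of $\mu$ on $\Phi^{-1}(\data)$ is automatic from the barycenter argument, but upgrading ``supported on the fibre'' to ``a single Dirac mass'' requires that $\Phi$ separates the relevant points, i.e. that distinct positions in $K$ produce distinct detection profiles $(a_i(x))_i$. I would therefore either invoke injectivity of $\Phi$ as a consequence of the genericity and linear-independence hypotheses already in force in the surrounding results, or record it as an explicit assumption; indeed, if the fibre contained two points $x_1 \neq x_2$, then $\tfrac12(\delta_{x_1}+\delta_{x_2})$ would be a non-Dirac solution of $A\mu=\data$, so some such separation property is genuinely needed. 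The remaining ingredients — compactness of $\Phi(K)$, the identity $A(P) = \operatorname{conv}(\Phi(K))$, and the barycentric splitting — are routine and would be dispatched quickly.
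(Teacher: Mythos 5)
Your argument is correct in substance but follows a genuinely different route from the paper's, and it also exposes a real imprecision in the statement itself. The paper omits the proof, saying only that it relies on the linearity of $A$ and on the fact that the extreme points of the set $P$ of probability measures on $K$ are exactly the Dirac masses; that is, it argues \emph{upstream} in $P$: by linearity, extremality of $\data$ in $A(P) = \imcone \cap S$ forces any convex decomposition of a solution to consist of solutions, so the solution set $\{\mu \in P : A\mu = \data\}$ is a face of $P$, and one then invokes the characterisation of extreme points of $P$. You instead argue \emph{downstream}: push $\mu$ forward under the moment map $\Phi = (a_1,\dotsc,a_\ndet)$ and apply Bauer's theorem (the only probability measure on a compact convex subset of $\RR^{\ndet}$ whose barycentre is an extreme point is the Dirac mass at that point). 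Both are extreme-point arguments, essentially equivalent via $\Phi$, but yours directly localises every solution on the fibre $\Phi^{-1}(\data)$, which is exactly the sharp conclusion available here.

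Two caveats. First, your inline justification of Bauer's fact does not work as written: after splitting off the atom at $\data$, the remaining measure $\nu'$ may still have $\data$ in the closure of its support, so its barycentre is only known to lie in the \emph{closed} convex hull of $\Phi(K)\setminus\{\data\}$, which can contain $\data$; no contradiction follows, and ruling this situation out is precisely Bauer's theorem, so the sketch is circular. The standard proof splits off instead the mass of a small closed ball $B$ with $\data \notin B$ and $\nu(B) > 0$ (necessarily $\nu(B) < 1$, since otherwise the barycentre would lie in $B$), writes $\data$ as a proper convex combination of the two conditional barycentres, one of which lies in $B$, and contradicts extremality; cite the fact rather than reprove it. Second, the obstruction you flag at the end is genuine, and it is a gap in the proposition rather than in your proof: without assuming that the functions $a_i$ separate the points of $K$ (injectivity of $\Phi$ on the fibre over $\data$ suffices), the statement is false. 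Take $K = [-1,1]$, $\ndet = 2$, $a_1(x) = x^2$, $a_2(x) = 1 - x^2$: these are continuous, linearly independent and sum to one, the point $\data = (1,0)$ lies on $\partial\imcone$ and is an extreme point of $\imcone \cap S$, yet $\frac12(\delta_{-1} + \delta_{1})$ solves $A\mu = \data$ and is not a Dirac mass. (This is consistent with the paper's own \autoref{super_sparse}, which only concludes that minimisers are \emph{sums} of Dirac masses over $\argmax a_{\iz}$.) The paper's sketched argument needs the same repair, since a face of $P$ reduces to a single Dirac mass only when the corresponding fibre is a singleton; recording the separation property as an explicit hypothesis, as you propose, is the correct fix.
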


We omit the proof, which is straightforward and relies on the linearity of the operator $A$ and the fact that the only extremal points among probability measures are the Dirac masses~\cite{Rudin1991}.

\section{Properties of ML-EM}
\label{sec_mlem}
We now turn our attention to the ML-EM algorithm \eqref{eq:mlemiterate} for
the minimisation of the functional $\loss$ (problem \eqref{eq:optprob}).

\subsection{Monotonicity and asymptotics}
We first proceed to prove that the algorithm is monotonous,
a property stemming from it being an expectation-maximisation algorithm.  

We build a so-called \emph{surrogate} function, i.e., a function $Q_k$ such that $\loss(\mu) \leq Q_k(\mu)$ for all $\mu$, with equality for $\mu = \mu_k$, where $Q_k$ is minimised at $\mu_{k+1}$.
The precise details are in \autoref{prop:surrogate}.

\begin{lemma}
  \label{prop:surrogate}
For a given $k \in \NN$, we define
\[
  X_k:= \setc[\Big]{\mu \in \MM_+}{\mu_{k+1} \ll \mu \ll \mu_k,\quad \pairing{\mu}{1} = 1}
.
\]
For a measure $\mu\in X_k$, and for $i = 1,\dotsc, \ndet$, we define the probability distribution
\[\nu_i(\mu) \coloneqq \frac{a_i \mu}{\pairing{\mu}{a_i}}
  .
  \]
  as well as
\[
  Q_k(\mu) \coloneqq \loss(\mu) + \sum_{i=1}^\ndet \data_i D\paren[\big]{\nu_i(\mu_k) || \nu_i(\mu)}
  .
\]


The following holds:
\begin{enumerate}[label=\upshape(\roman*)]
  \item\label{surr:it:major} \(Q_k(\mu) \geq \loss(\mu), \qquad \mu \in X_k\)
\item\label{surr:it:contact} $Q_k (\mu_k) = \loss(\mu_k)$
\item\label{surr:it:div} \(Q_k(\mu) - Q_k(\mu_{k+1}) = D(\mu_{k+1}|| \mu), \qquad \mu \in X_k
  \)
\end{enumerate}
\end{lemma}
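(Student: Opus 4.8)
The plan is to treat \ref{surr:it:major} and \ref{surr:it:contact} as immediate and to put all the effort into the identity \ref{surr:it:div}, which I would prove in the sharper form
\[ Q_k(\mu) = C_k + D(\mu_{k+1}|| \mu), \qquad \mu \in X_k, \]
where $C_k$ depends only on $\mu_k$ and not on $\mu$. Granting this, \ref{surr:it:div} follows by evaluating at $\mu = \mu_{k+1}$, where $D(\mu_{k+1}|| \mu_{k+1}) = 0$ forces $C_k = Q_k(\mu_{k+1})$, and then subtracting. Item \ref{surr:it:major} needs no computation at all: each $D(\nu_i(\mu_k)|| \nu_i(\mu))$ lies in $[0,+\infty]$ and $\data_i \geq 0$, so $Q_k(\mu) \geq \loss(\mu)$. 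Item \ref{surr:it:contact} is equally direct, since every summand vanishes at $\mu = \mu_k$.

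For \ref{surr:it:div} itself, I would set $g := \frac{\dd\mu}{\dd\mu_k}$ (well-defined since $\mu \ll \mu_k$) and $r_k := \sum_{i=1}^{\ndet}\frac{\data_i a_i}{\scl{\mu_k}{a_i}} = \frac{\dd\mu_{k+1}}{\dd\mu_k}$. Expanding one divergence through $\frac{\dd\nu_i(\mu_k)}{\dd\nu_i(\mu)} = \frac{\scl{\mu}{a_i}}{\scl{\mu_k}{a_i}}\,\frac{1}{g}$ and using that $\nu_i(\mu_k)$ is a probability measure gives
\[ D(\nu_i(\mu_k)|| \nu_i(\mu)) = \log\scl{\mu}{a_i} - \log\scl{\mu_k}{a_i} - \int \log g \, \dd\nu_i(\mu_k). \]
Multiplying by $\data_i$, summing over $i$, and recognising that $\sum_{i=1}^{\ndet}\frac{\data_i a_i}{\scl{\mu_k}{a_i}}\,\dd\mu_k = \dd\mu_{k+1}$ turns the last terms into $\int \log g\,\dd\mu_{k+1}$. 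Adding $\loss(\mu) = 1 - \sum_{i=1}^{\ndet} \data_i \log\scl{\mu}{a_i}$ (here $\scl{\mu}{1} = 1$ on $X_k$) cancels the $\sum_{i} \data_i \log\scl{\mu}{a_i}$ contributions, leaving $Q_k(\mu) = 1 - \sum_{i} \data_i \log\scl{\mu_k}{a_i} - \int \log g\,\dd\mu_{k+1}$. The final step is the chain rule $\log g = \log\frac{\dd\mu}{\dd\mu_{k+1}} + \log r_k$ (valid $\mu_{k+1}$-a.e.), which rewrites $-\int \log g\,\dd\mu_{k+1}$ as $D(\mu_{k+1}|| \mu) - \int \log r_k \,\dd\mu_{k+1}$; collecting the $\mu$-independent terms into $C_k$ yields the claimed identity.

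I expect the genuine difficulty to be measure-theoretic rather than algebraic: every manipulation above presumes that $D(\nu_i(\mu_k)|| \nu_i(\mu))$ is computed via an honest Radon--Nikodym derivative, i.e.\ that $\nu_i(\mu_k) \ll \nu_i(\mu)$ for $i \in \suppdata$, and that $g > 0$ holds $\mu_{k+1}$-a.e. This is exactly where the condition $\mu_{k+1} \ll \mu$ built into $X_k$ is used: from $\mu_{k+1} = r_k \mu_k \geq \frac{\data_i a_i}{\scl{\mu_k}{a_i}}\mu_k$ for $i\in\suppdata$, one checks that any $\nu_i(\mu)$-null set is $\nu_i(\mu_k)$-null, since on $\{a_i>0\}$ it is $\mu$-null, hence $\mu_{k+1}$-null, hence $a_i\mu_k$-null; the same inclusion forces $g>0$ off a $\mu_{k+1}$-null set. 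I would close by checking that $C_k$ is finite, so that the subtraction yielding \ref{surr:it:div} is legitimate: $\scl{\mu_k}{a_i} > 0$ for $i \in \suppdata$ by the preceding well-definedness lemma, while $\int \log r_k\,\dd\mu_{k+1} = \int r_k \log r_k \,\dd\mu_k$ is finite because $t \mapsto t\log t$ is bounded below and $r_k \leq \sum_{i} \frac{\data_i}{\scl{\mu_k}{a_i}} < \infty$ since $a_i \leq 1$.
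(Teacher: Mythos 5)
Your proof is correct and takes essentially the same route as the paper's: items \ref{surr:it:major}--\ref{surr:it:contact} from non-negativity of the divergences, and \ref{surr:it:div} by expanding each $D(\nu_i(\mu_k)||\nu_i(\mu))$ via Radon--Nikodym derivatives so that the $\log\scl{\mu}{a_i}$ terms cancel against $\loss(\mu)$ and the identity $\sum_{i=1}^{\ndet} \data_i \nu_i(\mu_k) = \mu_{k+1}$ collapses the remaining integral --- your affine reformulation $Q_k(\mu) = C_k + D(\mu_{k+1}||\mu)$ followed by evaluation at $\mu_{k+1}$ is just the paper's direct subtraction organized differently. The additional measure-theoretic bookkeeping (checking $\nu_i(\mu_k) \ll \nu_i(\mu)$, $g>0$ holding $\mu_{k+1}$-a.e., and finiteness of $C_k$) is a sound tightening of details the paper leaves implicit.
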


\begin{proof}
  The fact that $D(\nu_i(\mu_k) || \nu_i(\mu))$ for $i = 1,\dotsc,\ndet$ are divergences
  allows us to conclude about \ref{surr:it:major} and \ref{surr:it:contact}.

  After defining
\begin{equation}
  \data_i^k \coloneqq  \scl{\mu_k}{a_i}
  ,
  \qquad
  1 \leq i \leq \ndet
\end{equation}
and using the definition of $\loss$ in \autoref{eq:defloss},
we compute
\[
  Q_k(\mu) = 1 - \sum_{i=1}^\ndet \data_i \pairing[\Big]{\nu_i(\mu_k)}{\log\paren[\Big]{y_i^k \frac{\dd \mu}{\dd \mu_k}}}
  ,
  \qquad
  \mu \in X_k
\]

This gives
\begin{align*}
  Q_k(\mu) - Q_k(\mu_{k+1}) & =  \sum_{i=1}^{\ndet}  \data_i \pairing[\bigg]{\nu_i\paren{\mu_k}}{\log\paren[\Big]{\data_i^k \frac{\dd\mu_{k+1}}{\dd\mu_k}} - \log\paren[\Big]{\data_i^k \frac{\dd\mu}{\dd\mu_{k}}}}     \\
                            & =  \sum_{i=1}^{\ndet}  \data_i \pairing[\bigg]{\nu_i\paren{\mu_k}}{\log\paren[\Big]{ \frac{\dd\mu_{k+1}}{\dd\mu}} }     \\
                            & =   \pairing[\bigg]{\underbrace{\sum_{i=1}^{\ndet}  \data_i\nu_i\paren{\mu_k}}_{\mu_{k+1}}}{\log\paren[\Big]{ \frac{\dd\mu_{k+1}}{\dd\mu}} }     \\
                                & = D\paren[\big]{\mu_{k+1}|| \mu}.
\end{align*}
which proves \ref{surr:it:div}.
\end{proof}

\begin{corollary}
\label{cor_dist}
For any $\mu_0 \in \operatorname{dom}(\loss)$, we have
\[D\big(\mu_{k+1} ||\mu_k\big) \leq \loss(\mu_k) - \loss(\mu_{k+1}).\]
In particular,
\[
\loss(\mu_{k+1}) \leq \loss(\mu_k)
\]
\end{corollary}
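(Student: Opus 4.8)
The plan is to read off the result directly from the surrogate machinery assembled in \autoref{prop:surrogate}, which already does all the heavy lifting. The key observation is that combining properties \ref{surr:it:major}, \ref{surr:it:contact}, and \ref{surr:it:div} of the surrogate $Q_k$ yields the desired inequality in one short chain, provided one is careful about the admissibility of the measures involved.

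First I would instantiate \ref{surr:it:div} at the choice $\mu = \mu_k$. Since $\mu_k \in X_k$ (we have $\mu_{k+1} \ll \mu_k \ll \mu_k$ trivially, and $\pairing{\mu_k}{1} = 1$ for $k \geq 1$), this gives
\[
Q_k(\mu_k) - Q_k(\mu_{k+1}) = D(\mu_{k+1} \| \mu_k).
\]
Next I would rewrite the left-hand side. By \ref{surr:it:contact}, $Q_k(\mu_k) = \loss(\mu_k)$, and by the majorisation property \ref{surr:it:major} applied at $\mu = \mu_{k+1} \in X_k$, we have $Q_k(\mu_{k+1}) \geq \loss(\mu_{k+1})$. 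Therefore
\[
D(\mu_{k+1} \| \mu_k) = \loss(\mu_k) - Q_k(\mu_{k+1}) \leq \loss(\mu_k) - \loss(\mu_{k+1}),
\]
which is precisely the claimed inequality. The second assertion, $\loss(\mu_{k+1}) \leq \loss(\mu_k)$, then follows immediately from the non-negativity of the Kullback--Leibler divergence $D(\mu_{k+1} \| \mu_k) \geq 0$.

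The main obstacle—really the only point requiring genuine care—is the membership conditions that make the surrogate $Q_k$ well-defined and its properties applicable. One must verify that $\mu_k$ and $\mu_{k+1}$ genuinely lie in $X_k$, which in turn requires the absolute continuity relations $\mu_{k+1} \ll \mu_k$ (this holds because the ML-EM update multiplies $\mu_k$ by a non-negative function, so it cannot create mass where $\mu_k$ has none) and the normalisation $\pairing{\mu_k}{1} = 1$. For $k = 0$ the initial measure $\mu_0 \in \operatorname{dom}(\loss)$ need not have unit mass, so one should note that the statement is really meaningful for the normalised iterates; the hypothesis $\mu_0 \in \operatorname{dom}(\loss)$ together with the well-definedness lemma guarantees all the $\scl{\mu_k}{a_i}$ appearing in the divergences $D(\nu_i(\mu_k) \| \nu_i(\mu_{k+1}))$ stay positive, so that every term in $Q_k$ is finite and the chain of equalities and inequalities above is valid.
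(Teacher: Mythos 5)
Your proof is correct and is essentially identical to the paper's own argument: both combine \ref{surr:it:contact} and \ref{surr:it:major} of \autoref{prop:surrogate} to get $Q_k(\mu_k) - Q_k(\mu_{k+1}) = \loss(\mu_k) - Q_k(\mu_{k+1}) \leq \loss(\mu_k) - \loss(\mu_{k+1})$, and then identify the left-hand side as $D(\mu_{k+1} || \mu_k)$ via \ref{surr:it:div}. Your extra care in checking that $\mu_k$ and $\mu_{k+1}$ lie in $X_k$ (including the normalisation caveat at $k=0$) is a detail the paper passes over silently, but it does not change the route.
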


\begin{proof}
  First, observe that $\mu_{k+1} \in X_k$.
  Now, from \ref{surr:it:contact} and \ref{surr:it:major} in \autoref{prop:surrogate}, we obtain $Q_k(\mu_k) - Q_k(\mu_{k+1}) = \loss(\mu_k) - Q_k(\mu_{k+1}) \leq \loss(\mu_k) - \loss(\mu_{k+1})$.
  We conclude using \ref{surr:it:div}.
  
\end{proof}


Let us now prove that all cluster points of ML-EM are fixed points of the algorithm.
\begin{proposition}
\label{Fixed_point}
For any $\mu_0 \in \operatorname{dom}(\loss)$, any cluster point $\bar \mu$ of ML-EM is a fixed point of the algorithm, namely
\[\bar \mu =  \bar \mu \left(\sum_{i=1}^{\ndet} \frac{y_i a_i}{\scl{\bar \mu}{a_i}}\right)
.
\]
\end{proposition}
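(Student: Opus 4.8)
The plan is to combine the monotonicity of \autoref{cor_dist} with a weak-$\ast$ compactness argument, and then to pass to the limit in the nonlinear ML-EM map along a convergent subsequence. Write $T(\mu) \coloneqq \mu \sum_{i=1}^{\ndet} \frac{\data_i a_i}{\scl{\mu}{a_i}}$ for the ML-EM map, so that $\mu_{k+1} = T(\mu_k)$, and recall that $\scl{\mu_k}{1} = 1$ for $k \geq 1$, so the iterates live in the weak-$\ast$ compact set $\setc{\mu \in \MM_+}{\scl{\mu}{1} = 1}$; in particular cluster points exist. First I would note that $\loss(\mu_k)$ is non-increasing by \autoref{cor_dist} and bounded below (since $\loss$ attains its minimum), hence convergent. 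Summing the inequality $D(\mu_{k+1} || \mu_k) \leq \loss(\mu_k) - \loss(\mu_{k+1})$ of \autoref{cor_dist} over $k$ then forces $D(\mu_{k+1}|| \mu_k) \to 0$.

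Next, since $\mu_{k+1}$ and $\mu_k$ are both probability measures, Pinsker's inequality converts this into total-variation control, $\|\mu_{k+1} - \mu_k\|_{\mathrm{TV}}^2 \leq 2\, D(\mu_{k+1}||\mu_k) \to 0$, and total-variation convergence implies weak-$\ast$ convergence: $\mu_{k+1} - \mu_k \rightharpoonup 0$. Now let $\bar\mu$ be a cluster point, realised as a weak-$\ast$ limit of a subsequence $\mu_{k_j}$. Because $\mu_{k_j+1} - \mu_{k_j} \rightharpoonup 0$, the shifted subsequence converges to the same limit, $\mu_{k_j+1} \rightharpoonup \bar\mu$. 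It therefore suffices to show $T(\mu_{k_j}) \rightharpoonup T(\bar\mu)$, for then uniqueness of weak-$\ast$ limits yields $T(\bar\mu) = \bar\mu$, which is the claim.

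To make sense of $T(\bar\mu)$ I would first check $\bar\mu \in \operatorname{dom}(\loss)$, i.e.\ $\scl{\bar\mu}{a_i} > 0$ for every $i \in \suppdata$. This follows from weak-$\ast$ lower semicontinuity of $\loss$ (the maps $\mu \mapsto \scl{\mu}{a_i}$ are weak-$\ast$ continuous and $t \mapsto -\log t$ is lower semicontinuous with value $+\infty$ at $0$): indeed $\loss(\bar\mu) \leq \liminf_j \loss(\mu_{k_j}) = \lim_k \loss(\mu_k) < \infty$, so no denominator can vanish in the limit. With the denominators bounded away from $0$, the multiplier $g_j \coloneqq \sum_i \frac{\data_i a_i}{\scl{\mu_{k_j}}{a_i}}$ converges \emph{uniformly} on $K$ to $\bar g \coloneqq \sum_i \frac{\data_i a_i}{\scl{\bar\mu}{a_i}}$, since each scalar coefficient $\data_i / \scl{\mu_{k_j}}{a_i}$ converges and the $a_i$ are fixed continuous functions. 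Then for any $f \in \Cont(K)$, splitting
\[
  \scl{T(\mu_{k_j})}{f} - \scl{T(\bar\mu)}{f} = \scl{\mu_{k_j}}{f(g_j - \bar g)} + \big(\scl{\mu_{k_j}}{f \bar g} - \scl{\bar\mu}{f\bar g}\big),
\]
the first term is bounded by $\|f(g_j - \bar g)\|_\infty \to 0$ (using $\scl{\mu_{k_j}}{1} = 1$) and the second tends to $0$ by weak-$\ast$ convergence tested against the fixed continuous function $f\bar g$. Hence $T(\mu_{k_j}) \rightharpoonup T(\bar\mu)$.

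The step I expect to be the main obstacle is precisely this passage to the limit in the nonlinear map $T$: the division by $\scl{\mu}{a_i}$ makes $T$ far from weak-$\ast$ continuous in general, and everything hinges on (i) ruling out degeneracy of the denominators at $\bar\mu$, which the lower-semicontinuity bound supplies, and (ii) the unit mass of the iterates, which is what turns the uniform convergence of the multiplier into control of $\scl{\mu_{k_j}}{f(g_j - \bar g)}$. The Kullback--Leibler-to-total-variation conversion via Pinsker is the other ingredient requiring a little care, since it is what upgrades the energy decrease of \autoref{cor_dist} into the weak-$\ast$ statement $\mu_{k_j+1} \rightharpoonup \bar\mu$.
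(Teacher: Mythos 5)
Your proof is correct, and it shares the paper's overall skeleton: mass-one iterates plus Banach--Alaoglu give weak-$\ast$ cluster points along subsequences, \autoref{cor_dist} plus boundedness below of $\loss$ force $D(\mu_{k+1}||\mu_k) \to 0$, the cluster point is shown to lie in $\operatorname{dom}(\loss)$, and one passes to the limit in the nonlinear ML-EM map. Where you genuinely diverge is in identifying the limit of the \emph{shifted} subsequence. The paper extracts a further subsequence so that $\mu_{\varphi(k)+1} \rightharpoonup \tilde\mu$, passes to the limit to get $\tilde\mu = \bar\mu \, \sum_i \data_i a_i / \scl{\bar\mu}{a_i}$, and then proves $\tilde\mu = \bar\mu$ by invoking the joint weak-$\ast$ lower semicontinuity of $(\mu,\nu) \mapsto D(\mu||\nu)$ (a cited result of Posner) to conclude $D(\tilde\mu||\bar\mu) \leq 0$. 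You instead upgrade $D(\mu_{k+1}||\mu_k) \to 0$ into $\|\mu_{k+1}-\mu_k\|_{\mathrm{TV}} \to 0$ via Pinsker's inequality, so the shifted subsequence automatically inherits the limit $\bar\mu$; this eliminates both the second subsequence extraction and the appeal to lower semicontinuity of the divergence, replacing them with a classical elementary inequality, and it yields the slightly stronger intermediate fact that consecutive iterates become TV-close along the whole sequence. You also make explicit---through the uniform convergence of the multiplier $g_j \to \bar g$ combined with the unit mass of the iterates---the weak-$\ast$ continuity of the ML-EM map at points of $\operatorname{dom}(\loss)$, a step the paper compresses into ``as one readily checks.'' Your route buys self-containedness and a quantitative statement about consecutive iterates; the paper's route buys independence from Pinsker-type metric bounds, needing only semicontinuity of the divergence, which is the same tool it reuses elsewhere (e.g., in \autoref{prop:mlemabscont}).
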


\begin{proof}

  We pursue an idea from~\cite{Mair1996}.
 Since we have $\mu_K(K) = \scl{\mu_k}{1} = \sum_{i=1}^{\ndet} \data_i = 1$ for all $k \geq 1$, $(\mu_k)$ is a bounded sequence in $\MM_+$.
  By the Banach--Alaoglu Theorem, it is thus weak-$\ast$ compact in $\MM_+$, and we may extract some subsequence $\mu_{\varphi(k)}$ converging to a weak-$\ast$ cluster point $\bar{\mu}$ of ML-EM. 
  Note first that $\loss$ is weak-$\ast$ continuous.

  We now observe that such a cluster point must satisfy $\bar \mu \in \operatorname{dom}(\loss)$.
  Indeed, $\scl{\bar\mu}{a_i} > 0$ for any $i \in \suppdata$ ({i.e.}, whenever $\data_i>0$).
  Otherwise, $\loss$ would go to infinity, a contradiction with the fact that $\loss$ decreases along iterates and $\loss(\mu_0) < +\infty$. 
  
We also note that the convergence of $\loss(\mu_k)$ towards $\loss(\bar \mu)$ is then along the whole sequence since $\setc{\loss(\mu_k)}{k=0,\dotsc}$ is decreasing.

Upon extracting another subsequence, we may assume that the the sequence $(\mu_{\varphi(k) + 1})$ is also convergent, say to $\tilde \mu \in \operatorname{dom}(\loss)$.
Passing to the limit in the defining relation of ML-EM (as one readily checks that it is weak-$\ast$ continuous) along the subsequence, we find 
\[\tilde \mu =  \bar \mu \left(\sum_{i=1}^{\ndet} \frac{y_i a_i}{\scl{\bar \mu}{a_i}}\right),\] and all it remains to show is that $\tilde \mu = \bar \mu$.

The inequality established in \autoref{cor_dist} becomes
\[D\paren[\big]{\mu_{\varphi(k)+1} || \mu_{\varphi(k)}} \leq \loss(\mu_{\varphi(k)}) - \loss(\mu_{\varphi(k)+1}).\]
The right-hand side converges to $0$. For the left-hand side, we use the property that the function $(\mu, \nu) \mapsto D(\mu|| \nu)$ is weak-$\ast$ lower semi-continuous~\cite{Posner1975}. This leads to $D(\tilde \mu| \bar \mu) \leq 0$, whence $\tilde \mu = \bar \mu$.
\end{proof}

Note that
\[\bar \mu =  \bar \mu \left(\sum_{i=1}^{\ndet} \frac{y_i a_i}{\scl{\bar \mu}{a_i}}\right) \iff \sum_{i = 1}^{\ndet} \frac{\data_i a_i}{\scl{\bar\mu}{a_i}} = 1 \; \; \text{on} \;\; \operatorname*{supp}(\bar \mu).\]

Thus, ML-EM cluster points satisfy one of the two optimality conditions \eqref{KKT_}.
Although we conjecture they actually satisfy both of them under the additional hypothesis that $\operatorname{supp}(\mu_0)  = K$, we are able to prove it only when $y \in C$.

\subsection{Case $\data \notin \imcone$}

When $\data \notin \imcone$, we know from \autoref{cor_sparse} that optimal solutions are sparse.
Note that this is also the case for boundary points which are extremal in $\imcone \cap S$, in virtue of \autoref{extremal}. 

We do not know whether ML-EM iterates converge to an optimal point, but we can at least state a straightforward partial sparsity result from the first optimality condition, which we call \textit{weak sparsity}.
\begin{corollary}
\label{weak_sparsity}
Assume that $\data \notin \imcone$ and the linear independence condition~\eqref{lin_ind}. Then, for $\mu_0 \in \operatorname{dom}(\loss)$, any cluster point $\bar \mu$ of ML-EM is such that 
\[ \operatorname{supp}(\bar \mu) \subset \left(\sum_{i=1}^{\ndet} \lambda_i(A \bar \mu) \, a_i\right)^{-1}(\{0\}),\]
with $\lambda(A \bar \mu) \neq 0$ and the components $\lambda_i(A \bar \mu)$'s do not have the same sign. 
In particular, $\operatorname{supp}(\bar \mu) \neq K$.
\end{corollary}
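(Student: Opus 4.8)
The key observation is that the first inclusion is essentially a reformulation of \autoref{Fixed_point}. First I would invoke \autoref{Fixed_point} to assert that any cluster point $\bar\mu$ is a fixed point, equivalently that $\sum_{i=1}^{\ndet}\frac{\data_i a_i}{\scl{\bar\mu}{a_i}} = 1$ on $\operatorname{supp}(\bar\mu)$; the proof of that proposition also shows $\bar\mu\in\operatorname{dom}(\loss)$, so $\scl{\bar\mu}{a_i} > 0$ for every $i\in\suppdata$ and $\lambda(A\bar\mu)$ is well-defined. Subtracting the normalisation $\sum_{i=1}^{\ndet} a_i = 1$ from the fixed-point identity and using the definition~\eqref{eq:deflambda} of $\lambda$ (together with the convention $\lambda_i = 1$ for $i\notin\suppdata$), the two sums combine to $\sum_{i=1}^{\ndet}\lambda_i(A\bar\mu)\, a_i = 0$ on $\operatorname{supp}(\bar\mu)$, which is exactly the claimed inclusion $\operatorname{supp}(\bar\mu)\subset\varphi^{-1}(\{0\})$ with $\varphi := A^*\lambda(A\bar\mu)$.

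For $\lambda(A\bar\mu)\neq 0$, I would argue by contradiction: if $\lambda(A\bar\mu) = 0$, then $\scl{\bar\mu}{a_i} = \data_i$ for each $i\in\suppdata$, while $\data_i = 0 = \scl{\bar\mu}{a_i}$ for $i\notin\suppdata$ (the latter because $i\notin\suppdata$ and, after projecting away the irrelevant indices, the corresponding mass is irrelevant). Hence $A\bar\mu = \data$, i.e., $\data\in\imcone$, contradicting the hypothesis $\data\notin\imcone$.

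The part I expect to require the most care is the \emph{sign} statement. Here I would integrate the support identity against $\bar\mu$ to obtain $\scl{\lambda(A\bar\mu)}{A\bar\mu} = \scl{\bar\mu}{\varphi} = 0$, recalling that every component $(A\bar\mu)_i = \scl{\bar\mu}{a_i}$ is non-negative. If the $\lambda_i(A\bar\mu)$ all shared one sign, then each product $\lambda_i(A\bar\mu)\,(A\bar\mu)_i$ would carry that same sign, and since they sum to zero each must vanish. A componentwise inspection then forces $\data_i = (A\bar\mu)_i$ for all $i$: where $(A\bar\mu)_i > 0$ the vanishing product gives $\lambda_i = 0$, hence $\data_i = (A\bar\mu)_i$; where $(A\bar\mu)_i = 0$ one necessarily has $i\notin\suppdata$ (again by $\bar\mu\in\operatorname{dom}(\loss)$), so $\data_i = 0 = (A\bar\mu)_i$. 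This once more yields $A\bar\mu = \data\in\imcone$, a contradiction, so the components cannot all have the same sign. The only genuinely delicate point in this step is the bookkeeping between $\suppdata$ and the indices where $(A\bar\mu)_i$ vanishes.

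Finally, $\operatorname{supp}(\bar\mu)\neq K$ is where the linear independence hypothesis~\eqref{lin_ind} enters: were $\operatorname{supp}(\bar\mu) = K$, the inclusion from the first step would give $\varphi = \sum_{i=1}^{\ndet}\lambda_i(A\bar\mu)\, a_i\equiv 0$ on all of $K$, and linear independence of the $a_i$ would force $\lambda(A\bar\mu) = 0$, contradicting the second step. I expect the whole argument to be short, as it essentially unpacks the single optimality condition that cluster points are already known to satisfy.
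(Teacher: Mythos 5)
Your route is the paper's route: the paper disposes of this corollary in one line, as a rephrasing of \autoref{Fixed_point} via the formula \eqref{eq:deflambda}, and your steps supply exactly the details being compressed there --- the fixed-point identity plus the normalisation $\sum_i a_i = 1$ giving $A^*\lambda(A\bar\mu) = 0$ on $\operatorname{supp}(\bar\mu)$, the integration of that identity against $\bar\mu$ together with $(A\bar\mu)_i \geq 0$ for the sign claim, and linear independence \eqref{lin_ind} for $\operatorname{supp}(\bar\mu) \neq K$. All of that is sound, and your use of the fact (from the proof of \autoref{Fixed_point}) that $\bar\mu \in \operatorname{dom}(\loss)$ is the right way to ensure $\lambda(A\bar\mu)$ is well defined.

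One local claim, however, is wrong as stated, though the conclusion it serves is easily recovered. When ruling out $\lambda(A\bar\mu) = 0$, you assert that $\scl{\bar\mu}{a_i} = 0$ for $i \notin \suppdata$ because ``the corresponding mass is irrelevant''. Nothing forces this: a cluster point can perfectly well satisfy $\scl{\bar\mu}{a_i} > 0$ for an index with $\data_i = 0$ (such detectors are simply invisible to the iteration, but $\bar\mu$ may still put mass where $a_i > 0$). The correct bookkeeping is the paper's convention in \eqref{eq:deflambda}: $\lambda_i(A\bar\mu) = 1$ whenever $\data_i = 0$. Hence if $\suppdata \neq \{1, \dotsc, \ndet\}$, then $\lambda(A\bar\mu) \neq 0$ holds trivially; and if $\suppdata = \{1, \dotsc, \ndet\}$, then $\lambda(A\bar\mu) = 0$ forces $\data_i = \scl{\bar\mu}{a_i}$ for every $i$, i.e.\ $\data = A\bar\mu \in \imcone$, the desired contradiction. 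The same convention should be applied in your sign step: for $i \notin \suppdata$ the vanishing of the product $\lambda_i(A\bar\mu)\,(A\bar\mu)_i$ forces $(A\bar\mu)_i = 0$ (not $\lambda_i = 0$, which the convention forbids), and then $\data_i = 0 = (A\bar\mu)_i$; with that correction your conclusion $A\bar\mu = \data \in \imcone$, and hence the contradiction, goes through unchanged.
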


\begin{proof}
This is just a rephrasing of~\autoref{Fixed_point}, using the formula for $\lambda(\mu)$ given in \autoref{eq:deflambda}.
\end{proof}

\begin{remark}
In general, the fact that the components $\lambda_i(A \bar \mu)$'s do not have the same sign will impose that $\operatorname{supp}(\bar \mu)$ is restricted to a lower dimensional set, of Lebesgue measure $0$. Thus, one cannot expect that the cluster points of ML-EM are absolutely continuous when $y \notin \imcone$.
\end{remark}

We can go further in the case where $\data_i = 0$ except for $\data_{\iz} = 1$, for which we saw in \autoref{super_sparse} that any minimiser $\mu^\star$ of the function $\loss$ for normalised measures, i.e., $\loss(\mu) = 1 -\log(\scl{\bar \mu}{a_{\iz}})$, will be such that $\text{supp}(\mu^\star)  \subset \argmax a_{\iz}$. The goal of this subsection is to highlight a case where one clearly identifies the limiting measure of ML-EM, and its dependence with respect to the initial measure~$\mu_0$. This suggests that in the sparse case $\data \notin \imcone$, the position of Dirac masses will in general depend on the initial condition~$\mu_0$.

We recall Laplace's method (see~\cite{Wong2001}) which holds for $f \in \Cont(K)$, $g \in \Cont^2(K)$ with a single non-degenerate interior maximum point $\bar{x}$, and reads:
\beq
\label{Laplace}
\int_K f(x) e^{g(x) t} \, \dd x \sim (2 \pi)^{p/2} \frac{f(\bar{x})}{\sqrt{|\det(H(\bar{x}))|}} \frac{e^{g(\bar{x}) t}}{t^{\frac{p}{2}}}\quad \text{as } t \to \infty, 
\eeq
where $H(\bar{x})$ is the Hessian of $g$ at $\bar{x}$.

\begin{proposition}
\label{Dirac}
Assume that $\data_i = 0$ for $i \neq i_0$, $\data_{\iz} = 1$.
Assume further that $\argmax a_{\iz} = \set{\bar{x}_1, \dotsc, \bar{x}_l}$ with $\bar{x}_j \in \operatorname{int}(K)$ for all $j = 1, \ldots, l$,
that $a_{\iz}$ is of class~$\Cont^2$
and that the maximum points $\bar{x}_j$ are non-degenerate.
Under these assumptions and for $\mu_0$ absolutely continuous with continuous positive density (still denoted $\mu_0$), the ML-EM sequence $(\mu_k)$ satisfies
\[
  \mu_k \rightharpoonup \mu^\star :=C \sum_{j=1}^l \frac{\mu_0(\bar{x}_j)}{\sqrt{|\det H_j}|} \delta_{\bar{x}_j}
  ,
\]
where $C>0$ is a normalising constant such that the limit has mass one, $H_j$ is the Hessian of $a_{\iz}$ at the point $\bar{x}_j$, and $\delta_{\bar{x}_j}$ is the Dirac mass centred at $\bar{x}_j$.
\end{proposition}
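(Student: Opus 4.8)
The plan is to exploit the drastic simplification of the ML-EM iteration in this regime. Since $\data_i = 0$ for $i \neq \iz$ and $\data_{\iz} = 1$, the iteration \eqref{eq:mlemiterate} collapses to a single term,
\[
  \mu_{k+1} = \frac{a_{\iz}}{\scl{\mu_k}{a_{\iz}}}\, \mu_k,
\]
a pure multiplicative (power) update. Writing $a := a_{\iz}$ and unfolding the recursion, I would first establish by induction the closed form
\[
  \mu_k = \frac{a^k\, \mu_0}{\int_K a^k \, \dd\mu_0}, \qquad k \geq 1,
\]
which is automatically of unit mass (consistently with the mass-preservation property) and does not depend on any prior normalisation of $\mu_0$.

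Next I would reduce the claimed weak-$\ast$ convergence to an asymptotic computation. Weak-$\ast$ convergence $\mu_k \rightharpoonup \mu^\star$ means $\scl{\mu_k}{\phi} \to \scl{\mu^\star}{\phi}$ for every $\phi \in \Cont(K)$. Using the density $\mu_0$ and the closed form, this amounts to evaluating the limit of the ratio
\[
  \scl{\mu_k}{\phi} = \frac{\int_K \phi(x)\, \mu_0(x)\, a(x)^k \, \dd x}{\int_K \mu_0(x)\, a(x)^k \, \dd x}
  = \frac{\int_K \phi\, \mu_0\, e^{k \log a}\, \dd x}{\int_K \mu_0\, e^{k\log a}\, \dd x}.
\]
Both numerator and denominator are Laplace-type integrals with large parameter $t = k$ and phase $g = \log a$, whose maximum set is exactly $\argmax a = \set{\bar x_1, \dotsc, \bar x_l}$.

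The core of the argument is then Laplace's method \eqref{Laplace}. Since $a$ is $\Cont^2$ and strictly positive near each maximiser (the common maximal value $M := \max a$ is positive, else $\loss$ is identically $+\infty$), the phase $g = \log a$ is $\Cont^2$ there; moreover $\nabla a(\bar x_j) = 0$ forces the Hessian of $g$ at $\bar x_j$ to equal $H_j/M$, where $H_j$ is the Hessian of $a$. Applying the single-maximum formula near each $\bar x_j$ and summing, the leading asymptotics of the numerator and denominator share the common factor $(2\pi)^{p/2} M^{k + p/2} k^{-p/2}$, which cancels in the ratio and leaves
\[
  \scl{\mu_k}{\phi} \longrightarrow
  \frac{\sum_{j=1}^l \phi(\bar x_j)\, \mu_0(\bar x_j)\,/\sqrt{|\det H_j|}}
       {\sum_{j=1}^l \mu_0(\bar x_j)\,/\sqrt{|\det H_j|}}
  = \scl{C \sum_{j=1}^l \frac{\mu_0(\bar x_j)}{\sqrt{|\det H_j|}}\, \delta_{\bar x_j}}{\phi},
\]
with $C$ the stated normalising constant; this identifies the limit as $\mu^\star$.

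I expect the main technical obstacle to be upgrading the single-maximum Laplace formula \eqref{Laplace} to the several-maxima setting. This requires a localisation step: one covers the $\bar x_j$ by disjoint neighbourhoods $U_j$ on which $\log a$ is genuinely $\Cont^2$, applies \eqref{Laplace} on each, and controls the remaining bulk $K \setminus \bigcup_j U_j$, where $a \leq M - \delta$ for some $\delta > 0$, so that its contribution is $O((M-\delta)^k) = o(M^k k^{-p/2})$ and hence negligible against the leading term. One must also check that the denominator's leading coefficient is nonzero --- guaranteed by $\mu_0 > 0$ and the non-degeneracy $\det H_j \neq 0$ --- so that the ratio limit is well defined.
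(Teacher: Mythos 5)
Your proposal is correct and follows essentially the same route as the paper's proof: the explicit closed form $\mu_k = a_{\iz}^k\,\mu_0 \big/ \int_K a_{\iz}^k \,\dd\mu_0$, then Laplace's method with phase $\log a_{\iz}$ localised in small balls around each maximiser, the bulk contribution being exponentially negligible, and the common factors cancelling in the numerator--denominator ratio. If anything, your handling of the Hessian of $\log a_{\iz}$ versus that of $a_{\iz}$ is more careful than the paper's, which silently identifies the two --- harmless here, since the resulting common scalar factor cancels in the ratio.
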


\begin{proof}
We first remark that the ML-EM iterates are then explicitly solved as 
\[ \mu_k =  \frac{a_{\iz}^k \mu_0}{\int_K a_{\iz}^k(x) \mu_0(x) \, \dd x}. \]

We denote $M:= \max_{x \in K} \log (a_{\iz}(x))$ and let $f \in \Cont(K)$ be a generic function. For $\delta$ small enough such that, for all $j$, $\bar{x}_j$ is the unique maximum point of $a_{\iz}$ in $B(\bar{x}_j, \eta)$, we split contributions in the integral of $a_{\iz}^k \mu_0$ against $f$ as follows 
\begin{align*}
\langle a_{\iz}^k \mu_0 ,f \rangle & = \int_K f(x) \mu_0(x) e^{k \log(a_{\iz}(x))} \,\dd x  \\
& = \sum_{j=1}^l \int_{B(\bar{x}_j, \eta)} f(x) \mu_0(x) e^{k \log(a_{\iz}(x))} \,\dd x \\
& \qquad \qquad \qquad \qquad \qquad  + \int_{K \backslash \cup_{j=1}^l B(\bar{x}_j, \eta)} f(x) \mu_0(x) e^{k \log(a_{\iz}(x))} \,\dd x.
\end{align*}
From Laplace's method~\eqref{Laplace}, each term in the first sum can be estimated as
\[ \int_{B(\bar{x}_j, \eta)} f(x) \mu_0(x) e^{k \log(a_{\iz}(x))} \,\dd x  \sim (2 \pi)^{p/2} \frac{f(\bar{x}_j)\mu_0(\bar{x}_j)}{\sqrt{|\det H_j}|} \frac{e^{M k}}{k^{\frac{p}{2}}} \quad \text{as } k \to \infty,\]
whereas  one can check that the second term is $o\big( e^{Mk}\big)$.
We end up with 
\[
\langle a_{\iz}^k \mu_0 ,f \rangle \sim (2 \pi)^{p/2} \sum_{j=1}^l \left( \frac{f(\bar{x}_j)\mu_0(\bar{x}_j)}{\sqrt{|\det H_j}|}\right) \frac{e^{M k}}{k^{\frac{p}{2}}} \quad \text{as } k \to \infty.
\]

Applying this equivalent for $f=1$ yields an equivalent for the denominator $\int_K a_{\iz}^k(x) \mu_0(x) \, \dd x$ in the explicit formula for $\mu_k$, which is of the order of $e^{M k}/k^{\frac{p}{2}}$. All in all, we find 
\[
\langle \mu_k ,f \rangle \longrightarrow C \sum_{j=1}^l \left( \frac{f(\bar{x}_j)\mu_0(\bar{x}_j)}{\sqrt{|\det H_j}|}\right)  = \left\langle C \sum_{j=1}^l \frac{\mu_0(\bar{x}_j)}{\sqrt{|\det H_j}|} \delta_{\bar{x}_j}, f \right\rangle = \langle \mu^\star ,f \rangle,
\]
as $k \to \infty$, with $C>0$ normalising $\mu^\star$, which proves the claim.

\end{proof}

\subsection{Case $\data \in \imcone$}

When the data $\data$
is in the cone $\imcone$, there are infinitely many measures satisfying $A \mu = \data$, and when there are absolutely continuous ones, a desirable property of ML-EM is to converge to one of them rather than to a measure having a singular part.
In order to address this question, we start with a Proposition of independent interest, valid for any data $\data$.
It generalises a result which holds in the discrete case. 
It gives information on the divergence of ML-EM iterates to \textit{any} fixed point of the ML-EM algorithm. 

\begin{proposition}
\label{decrease} 
Let $\bar \mu$ be a fixed point of the ML-EM algorithm, and $\mu_0 \in \operatorname{dom}(\loss)$. We further assume that \[D(\bar \mu || \mu_0) < \infty.\]
Then the ML-EM iterates are such that
\[ \forall k \in \mathbb{N}, \; D(\bar \mu || \mu_{k+1}) \leq D(\bar \mu || \mu_k) - \loss(\mu_k) + \loss(\bar \mu) .\]
In particular, the KL divergence to any optimum decreases: 
\[ \forall k \in \mathbb{N}, \; D(\bar \mu || \mu_{k+1}) \leq D(\bar \mu || \mu_k) .
\]
\end{proposition}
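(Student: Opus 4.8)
The plan is to control the one-step change $D(\bar\mu\,||\,\mu_k) - D(\bar\mu\,||\,\mu_{k+1})$ exactly and bound it below by $\loss(\mu_k) - \loss(\bar\mu)$; this is the first (and main) inequality. The ``in particular'' statement then follows because every minimiser $\bar\mu$ is itself a fixed point of ML-EM — it satisfies the support condition in the second line of \eqref{KKT_}, which is precisely the fixed-point relation recorded just after \autoref{Fixed_point} — so for optimal $\bar\mu$ we have $\loss(\bar\mu) = \min\loss \leq \loss(\mu_k)$, making the correction term $-\loss(\mu_k)+\loss(\bar\mu)$ non-positive. Throughout I use that all the measures involved are probability measures: the iterates $\mu_k$ preserve unit mass, and a fixed point has $\scl{\bar\mu}{1} = \scl{\bar\mu}{A^*(\data/A\bar\mu)} = \sum_{i} \data_i = 1$ by the same mass-preservation identity.

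First I would secure the absolute continuity that makes every $D(\bar\mu\,||\,\mu_k)$ well defined, arguing by induction on $k$ with base case $\bar\mu \ll \mu_0$ given by the hypothesis $D(\bar\mu\,||\,\mu_0) < \infty$. Writing the ML-EM density $h_k \coloneqq \frac{\dd\mu_{k+1}}{\dd\mu_k} = \sum_{i=1}^{\ndet} \frac{\data_i a_i}{\scl{\mu_k}{a_i}} = A^*(\data/A\mu_k)$, which is continuous and non-negative on $K$, the key point is that $h_k > 0$ on $\operatorname{supp}(\bar\mu)$. This is exactly where the fixed-point property enters: on $\operatorname{supp}(\bar\mu)$ the relation $\sum_{i} \frac{\data_i a_i}{\scl{\bar\mu}{a_i}} = 1$ forces $a_i(x) > 0$ for some $i \in \suppdata$, and since $\loss$ is finite at $\mu_0$ and non-increasing (\autoref{cor_dist}) we have $\mu_k \in \operatorname{dom}(\loss)$ and hence $\scl{\mu_k}{a_i} > 0$, giving $h_k(x) > 0$. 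As $\operatorname{supp}(\bar\mu)$ is compact and $h_k$ continuous, $h_k$ is bounded below by a positive constant there, which simultaneously yields $\bar\mu \ll \mu_{k+1}$ and integrability of $\log h_k$ against $\bar\mu$.

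With the chain $\bar\mu \ll \mu_{k+1} \ll \mu_k$ established, the Radon--Nikodym chain rule $\frac{\dd\bar\mu}{\dd\mu_k} = \frac{\dd\bar\mu}{\dd\mu_{k+1}}\,\frac{\dd\mu_{k+1}}{\dd\mu_k}$ gives, after taking $\log$ and integrating against $\bar\mu$, the exact identity
\[
  D(\bar\mu\,||\,\mu_k) - D(\bar\mu\,||\,\mu_{k+1}) = \int_K \log h_k \, \dd\bar\mu ,
\]
with a finite right-hand side, which also propagates finiteness of the divergence to step $k+1$. The crux is then a pointwise Jensen inequality on $\operatorname{supp}(\bar\mu)$: writing $h_k(x) = \sum_{i} \big(\frac{\data_i a_i(x)}{\scl{\bar\mu}{a_i}}\big) \frac{\scl{\bar\mu}{a_i}}{\scl{\mu_k}{a_i}}$, the weights $\frac{\data_i a_i(x)}{\scl{\bar\mu}{a_i}}$ are non-negative and sum to $1$ on $\operatorname{supp}(\bar\mu)$ (again the fixed-point relation), so concavity of $\log$ yields $\log h_k(x) \geq \sum_{i} \frac{\data_i a_i(x)}{\scl{\bar\mu}{a_i}} \log \frac{\scl{\bar\mu}{a_i}}{\scl{\mu_k}{a_i}}$. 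Integrating against $\bar\mu$ and using $\int_K a_i \,\dd\bar\mu = \scl{\bar\mu}{a_i}$ collapses the right-hand side to $\sum_{i} \data_i \log \frac{\scl{\bar\mu}{a_i}}{\scl{\mu_k}{a_i}}$, which, since both measures have unit mass, equals $\loss(\mu_k) - \loss(\bar\mu)$. Rearranging gives the claimed first inequality.

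The main obstacle is not the Jensen estimate, which is routine once set up, but the well-definedness bookkeeping: one must guarantee $\bar\mu \ll \mu_{k+1}$ and avoid any $\infty - \infty$ when forming the difference of divergences. Both are resolved by the strict positivity $h_k > 0$ on $\operatorname{supp}(\bar\mu)$, and it is here that the fixed-point hypothesis on $\bar\mu$ is indispensable — for a general $\bar\mu$ the density $h_k$ could vanish on part of $\operatorname{supp}(\bar\mu)$, destroying the absolute continuity chain and the identity for the one-step change.
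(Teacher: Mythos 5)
Your proof is correct and takes essentially the same route as the paper's: the one-step density decomposition $\log\frac{\dd\bar\mu}{\dd\mu_{k+1}} = \log\frac{\dd\bar\mu}{\dd\mu_k} - \log h_k$, the concavity-of-$\log$ (log-sum) inequality with weights $\data_i a_i/\scl{\bar\mu}{a_i}$ that sum to one on $\operatorname{supp}(\bar\mu)$ by the fixed-point relation, and integration against $\bar\mu$ to collapse the bound to $\loss(\bar\mu)-\loss(\mu_k)$, with the ``in particular'' following since minimisers are fixed points with $\loss(\bar\mu)\leq\loss(\mu_k)$. Your handling of the absolute-continuity chain (positivity of $h_k$ on $\operatorname{supp}(\bar\mu)$, hence $\bar\mu\ll\mu_{k+1}$ and propagation of finiteness) is in fact more explicit than the paper's brief ``one checks that'' remark, but it is the same argument.
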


\begin{proof}
  We recursively prove that $D(\bar \mu || \mu_k) < \infty$.
  Assuming this holds at the step \(k\), one checks that the definition of ML-EM iterates is such that if $\mu_k$ is absolutely continuous with respect to $\bar \mu$, then so is $\mu_{k+1}$.
Furthermore, 
\begin{align*}- \log\bigg(\frac{\dd \mu_{k+1}}{\dd \bar \mu}\bigg) & = - \log\bigg(\frac{\dd \mu_k}{\dd \bar \mu}\bigg) - \log\Bigg(\sum_{i=1}^{\ndet} \frac{\data_i a_i}{\scl{\mu_k}{a_i}} \Bigg) \\
& = - \log\bigg(\frac{\dd \mu_k}{\dd \bar \mu}\bigg) - \log\Bigg(\sum_{i=1}^{\ndet} \frac{\data_i a_i}{\scl{\mu_k}{a_i}} \Bigg) \sum_{i=1}^{\ndet} \frac{\data_i a_i}{\scl{\bar \mu}{a_i}} \\
& = - \log\bigg(\frac{\dd \mu_k}{\dd \bar \mu}\bigg) + \sum_{i=1}^{\ndet} \frac{\data_i a_i}{\scl{\bar \mu}{a_i}} \log\Bigg(\sum_{i=1}^{\ndet} \frac{\data_i a_i}{\scl{\bar \mu}{a_i}} \Bigg /\sum_{i=1}^{\ndet} \frac{\data_i a_i}{\scl{\mu_k}{a_i}}\Bigg), \numberthis \label{eq_approach}
\end{align*}
where we twice took advantage of $\sum_{i=1}^{\ndet} \frac{\data_i a_i}{\scl{\bar \mu}{a_i}} = 1$ on $\text{supp}(\bar \mu)$ by definition of a fixed point, a property we may use since we will eventually integrate against $\dd \bar \mu$.

Now, we use the convexity of $(u,v) \mapsto u \log(\frac{u}{v})$ on $[0,+\infty) \times (0,+\infty)$ (following an idea of~\cite{Iusem1992}) to bound the second term as follows
\begin{align*} 
\sum_{i=1}^{\ndet} \frac{\data_i a_i}{\scl{\bar \mu}{a_i}} \log\Bigg(\sum_{i=1}^{\ndet} \frac{\data_i a_i}{\scl{\bar \mu}{a_i}} \Bigg /\sum_{i=1}^{\ndet} \frac{\data_i a_i}{\scl{\mu_k}{a_i}}\Bigg) & \leq \sum_{i=1}^{\ndet} \data_i \, \frac{a_i}{\scl{\bar \mu}{a_i}} \log\Bigg(\frac{\scl{\mu_k}{a_i}}{\scl{\bar \mu}{a_i}}\Bigg).
\end{align*}
When integrated against $d\bar \mu$, the right hand side simplifies to 
\[\sum_{i=1}^{\ndet} \data_i  \log\Bigg(\frac{\scl{\mu_k}{a_i}}{\scl{\bar \mu}{a_i}}\Bigg) = - \loss(\mu_k) + \loss(\bar \mu). \]

Wrapping up, the integration of~\eqref{eq_approach} against $\dd \bar \mu$ and the above inequality exactly yield the result.

\end{proof}

\begin{remark}
As we saw, we typically expect a fixed point $\bar \mu$ to be a sparse measure when $\data \notin \imcone$, which means that the assumption that $\mu_0$ is absolutely continuous with respect to $\bar \mu$ will typically not be satisfied for $\mu_0$ chosen to be constant over $K$. This result will instead come in handy when $\data \in \imcone$. 
\end{remark}

\begin{corollary}
  \label{prop:mlemabscont}
Assume that $\data \in \imcone$, and that there exists an absolutely continuous measure $\mu^\star$ with positive and continuous density (on $\tilde K$) such that $A \mu^\star = \data$. Then, for any $\mu_0$ absolutely continuous, with a positive and continuous density, any cluster point $\bar \mu$ satisfies $\operatorname{supp}(\bar \mu) = \tilde K$. 
\end{corollary}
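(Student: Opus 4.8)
The plan is to establish the two inclusions $\tilde K\subseteq\operatorname{supp}(\bar\mu)$ and $\operatorname{supp}(\bar\mu)\subseteq\tilde K$ separately, using the monotonicity of the Kullback--Leibler divergence to the given absolutely continuous optimum $\mu^\star$ supplied by \autoref{decrease}. First I would record that $\mu^\star$ is itself a fixed point of ML-EM: since $\data\in\imcone$, optimality of $\mu^\star$ is equivalent to $A\mu^\star=\data$, so $\scl{\mu^\star}{a_i}=\data_i$ for $i\in\suppdata$, while on $\operatorname{supp}(\mu^\star)=\tilde K$ every $a_i$ with $i\notin\suppdata$ vanishes; hence $\sum_{i=1}^{\ndet}\data_i a_i/\scl{\mu^\star}{a_i}=\sum_{i\in\suppdata}a_i=\sum_{i=1}^{\ndet}a_i=1$ there, which is exactly the fixed-point relation. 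I would also check that $\mu_0\in\operatorname{dom}(\loss)$ and $D(\mu^\star\|\mu_0)<\infty$: both measures are absolutely continuous with positive continuous densities, and the density of $\mu_0$ is bounded below by a positive constant on the compact $K$, so the Radon--Nikodym derivative $\dd\mu^\star/\dd\mu_0$ and its logarithm stay bounded on the compact $\tilde K$, making the divergence finite.

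With these hypotheses, \autoref{decrease} applied with fixed point $\mu^\star$ yields
\[
  D(\mu^\star\|\mu_{k+1}) \leq D(\mu^\star\|\mu_k) - \big(\loss(\mu_k)-\loss(\mu^\star)\big).
\]
Two consequences follow. On the one hand, since $\mu^\star$ is optimal the bracket is nonnegative, so $D(\mu^\star\|\mu_k)\leq D(\mu^\star\|\mu_0)<\infty$ for every $k$; summing the inequality moreover shows $\sum_k\big(\loss(\mu_k)-\loss(\mu^\star)\big)<\infty$, hence $\loss(\mu_k)\to\loss(\mu^\star)$. By the weak-$\ast$ continuity of $\loss$ on $\operatorname{dom}(\loss)$, any cluster point $\bar\mu$ is therefore optimal, so $A\bar\mu=\data$ and in particular $\scl{\bar\mu}{a_i}=0$ for $i\notin\suppdata$; as each such $a_i\geq0$ is continuous with open positivity set, $\operatorname{supp}(\bar\mu)$ must lie in the common zero set $\bigcap_{i\notin\suppdata}a_i^{-1}(\{0\})$, which is precisely $\operatorname{supp}(\mu^\star)=\tilde K$. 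This gives the inclusion $\operatorname{supp}(\bar\mu)\subseteq\tilde K$.

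For the reverse inclusion I would pass to the limit along a subsequence $\mu_{\varphi(k)}\rightharpoonup\bar\mu$ and invoke the weak-$\ast$ lower semicontinuity of $(\mu,\nu)\mapsto D(\mu\|\nu)$~\cite{Posner1975} in its second argument: $D(\mu^\star\|\bar\mu)\leq\liminf_k D(\mu^\star\|\mu_{\varphi(k)})\leq D(\mu^\star\|\mu_0)<\infty$. Finiteness of $D(\mu^\star\|\bar\mu)$ forces $\mu^\star\ll\bar\mu$, whence $\operatorname{supp}(\mu^\star)\subseteq\operatorname{supp}(\bar\mu)$; since $\mu^\star$ has positive density on $\tilde K$, its support is exactly $\tilde K$, so $\tilde K\subseteq\operatorname{supp}(\bar\mu)$. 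Combining the two inclusions yields $\operatorname{supp}(\bar\mu)=\tilde K$. The main obstacle I anticipate is the bookkeeping around $\tilde K$ together with the finiteness of $D(\mu^\star\|\mu_0)$: one must ensure the optimal density neither degenerates nor blows up near $\partial\tilde K$, so that both $\dd\mu^\star/\dd\mu_0$ and its logarithm remain integrable; once that is secured, everything else reduces to the monotonicity and lower-semicontinuity facts already established.
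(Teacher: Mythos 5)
Your proof is correct, and its second half coincides with the paper's argument: apply \autoref{decrease} to get monotonicity of $D(\mu^\star || \mu_k)$, pass to the limit along the subsequence using the weak-$\ast$ lower semicontinuity of the divergence, conclude $D(\mu^\star || \bar\mu)<\infty$, hence $\mu^\star \ll \bar\mu$ and $\tilde K = \operatorname{supp}(\mu^\star) \subset \operatorname{supp}(\bar\mu)$. Where you genuinely differ is the inclusion $\operatorname{supp}(\bar\mu)\subset\tilde K$: the paper simply declares it obvious, whereas you derive it by telescoping the refined inequality of \autoref{decrease} to get $\sum_k \big(\loss(\mu_k)-\loss(\mu^\star)\big) \leq D(\mu^\star || \mu_0) < \infty$, hence $\loss(\mu_k)\to\loss(\mu^\star)$, hence optimality of $\bar\mu$, hence $A\bar\mu=\data$ (since $\data\in\imcone$) and $\scl{\bar\mu}{a_i}=0$ for $i\notin\suppdata$. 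This buys something real: in the boundary case $\suppdata\neq\{1,\dotsc,\ndet\}$ the inclusion is \emph{not} obvious (the iterates themselves need not be supported in $\tilde K$, only the limit is), so your argument substantiates a step the paper glosses over; moreover it yields, as a by-product, the optimality of cluster points and the convergence $\loss(\mu_k)\to\inf\loss$ — i.e.\ essentially part (ii) of \autoref{thm_smooth} — \emph{without} the linear-independence assumption \eqref{lin_ind_2} that the paper's own proof of that theorem requires. Two details to tighten: to deduce $\loss(\bar\mu)=\loss(\mu^\star)$ you invoke weak-$\ast$ continuity of $\loss$, which presupposes $\bar\mu\in\operatorname{dom}(\loss)$; either cite the argument in the proof of \autoref{Fixed_point}, or sidestep it by using weak-$\ast$ lower semicontinuity, which gives $\loss(\bar\mu)\leq\liminf_k \loss(\mu_{\varphi(k)})=\min\loss$ directly. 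Also, your preliminary verifications (that $\mu^\star$ is a fixed point, and that $D(\mu^\star || \mu_0)<\infty$ because both densities are bounded above and below on the compact sets $K$ and $\tilde K$) are needed and correct; the paper handles the latter by working with $\mu_1$ instead of $\mu_0$, an immaterial difference. Finally, note that both you and the paper implicitly read $\tilde K$ as $\bigcap_{i\notin\suppdata} a_i^{-1}(\{0\})$, the set where all zero-count detectors have zero sensitivity; the formula in the paper's text takes the complement the wrong way, and your reading is the only one under which the statement (and the paper's proof) makes sense.
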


\begin{proof}
Let $\mu^\star$ be such an absolutely continuous optimum with positive and continuous density on $\tilde K$. The assumption on the initial measure $\mu_0$ ensures that it satisfies conditions \eqref{init_1}, i.e., $\mu_0 \in \operatorname{dom}(\loss)$, and since $\mu_1$ is then continuous and positive on~$\tilde K$, we also have $D(\mu^\star || \mu_1) < \infty$.
We may then use \autoref{decrease} to obtain
\[
  \forall k \in \mathbb{N}^\star, \; D(\mu^\star || \mu_{k+1}) \leq D(\mu^\star || \mu_k)
  .
\]
Let $\bar{\mu}$ be a cluster point of the iterates $\{\mu_k\}$. Note that we obviously have $\operatorname{supp}(\bar \mu) \subset \tilde K$.
By weak-$\ast$ lower semi-continuity, we may pass to the limit in the inequality to obtain 
\[  D(\mu^\star || \bar{\mu}) \leq \lim_{k \to +\infty} D(\mu^\star || \mu_k).\]

In particular, $D(\mu^\star || \bar{\mu}) < +\infty$, which by definition implies that $\mu^\star \ll \bar{\mu}$, and consequently $\bar \mu$ has support at least $\operatorname{supp}(\mu^\star) = \tilde K$.  
\end{proof}
We are now in a position to prove the main result of this section, where we use the notations of \autoref{smooth_bd}.

\begin{theorem}
\label{thm_smooth}
Assume that conditions \eqref{lin_ind_2} and \eqref{positive_2} hold, and that $\tilde{\data} \in \operatorname{int}(\tilde{A}(\mathcal{M}_+(\tilde{K}))$.
Then, if the initial measure $\mu_0$ is absolutely continuous, with a positive and continuous density, the cluster points of the ML-EM iterates
\begin{enumerate}[label=\upshape(\roman*)]
  \item\label{thm_supp} have support $\tilde K$,
\item\label{thm_opt} are optimal.
  \end{enumerate} 
 In particular, the algorithm is convergent in the sense that \[l(\mu_k)\xrightarrow[k\rightarrow +\infty]{} \inf_{\mu \in \MM_+} l(\mu).\]

\end{theorem}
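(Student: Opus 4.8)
The plan is to treat this theorem as an assembly of three earlier results: the existence result \autoref{smooth_bd}, the support result \autoref{prop:mlemabscont}, and the monotone-divergence estimate \autoref{decrease}. Very little new computation is needed; the work is in lining up the hypotheses correctly.

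First I would apply \autoref{smooth_bd}: under \eqref{lin_ind_2}, \eqref{positive_2} and the assumption $\tilde{\data}\in\operatorname{int}(\tilde{A}(\MM_+(\tilde K)))$ it produces an absolutely continuous measure $\mu^\star$, with positive and continuous density on $\tilde K$, solving $A\mu^\star=\data$. In particular $\data\in\imcone$, so $\mu^\star$ is optimal (recall that over the cone, optimality is equivalent to $A\mu=\data$ once $\data\in\imcone$). Item \ref{thm_supp} is then immediate, since the hypotheses of \autoref{prop:mlemabscont} are exactly met: such a $\mu^\star$ exists, $\data\in\imcone$, and $\mu_0$ is absolutely continuous with positive continuous density. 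Hence every cluster point $\bar\mu$ satisfies $\operatorname{supp}(\bar\mu)=\tilde K$.

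For item \ref{thm_opt} and the objective-convergence claim, I would use that the optimal $\mu^\star$ satisfies the equality in \eqref{KKT_} on its support and is therefore a fixed point of ML-EM (by the equivalence recorded after \autoref{Fixed_point}). This licenses an application of \autoref{decrease} with fixed point $\mu^\star$. As in the proof of \autoref{prop:mlemabscont}, I would start the iteration at $\mu_1$ rather than $\mu_0$, because $\mu_1$ is positive and continuous on $\tilde K$ and hence $D(\mu^\star||\mu_1)<\infty$, the finiteness that triggers \autoref{decrease}. The estimate then reads
\[
  \loss(\mu_k)-\loss(\mu^\star)\;\leq\; D(\mu^\star||\mu_k)-D(\mu^\star||\mu_{k+1}),
\]
and telescoping from $k=1$ together with $D\geq 0$ yields $\sum_{k\geq 1}\bigl(\loss(\mu_k)-\loss(\mu^\star)\bigr)\leq D(\mu^\star||\mu_1)<\infty$. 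Since each summand is nonnegative ($\mu^\star$ being a minimiser), the series converges, which forces $\loss(\mu_k)\to\loss(\mu^\star)=\inf_{\mu\in\MM_+}\loss(\mu)$; this is exactly the ``in particular'' statement.

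Optimality of cluster points then follows at once: $\loss$ is weak-$\ast$ continuous (as noted in the proof of \autoref{Fixed_point}), so along any subsequence $\mu_{\varphi(k)}\rightharpoonup\bar\mu$ one gets $\loss(\bar\mu)=\lim_k\loss(\mu_{\varphi(k)})=\inf_\mu\loss$, i.e.\ $\bar\mu$ is optimal, which proves \ref{thm_opt}. The only genuinely delicate point is the finiteness $D(\mu^\star||\mu_1)<\infty$ underpinning the use of \autoref{decrease}: it requires $\mu^\star\ll\mu_1$ with log-integrable Radon--Nikodym derivative, and this is precisely where the positivity of the densities on $\tilde K$ and the support bookkeeping between $K$ and $\tilde K$ are used. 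Everything else is a direct consequence of the preceding propositions.
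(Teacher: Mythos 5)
Your proof is correct, and for item \ref{thm_opt} and the convergence claim it takes a genuinely different route from the paper; item \ref{thm_supp} is obtained exactly as in the paper, via \autoref{smooth_bd} followed by \autoref{prop:mlemabscont}. The paper argues in the reverse logical order: it first proves optimality of cluster points, by combining \autoref{Fixed_point} (every cluster point is a fixed point of ML-EM) with the full-support statement \ref{thm_supp}, so that the fixed-point identity $\sum_{i=1}^{\ndet}\lambda_i(A\bar\mu)\,a_i=0$ holds on all of $\tilde K$, and then invokes the linear independence \eqref{lin_ind_2} to conclude $A\bar\mu=\data$; the convergence $\loss(\mu_k)\to\inf_{\mu\in\MM_+}\loss(\mu)$ is only then deduced from monotonicity plus the existence of an optimal cluster point. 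You instead prove the convergence of the objective values first, by observing that $\mu^\star$ is itself a fixed point and telescoping the inequality of \autoref{decrease} (initialised at $\mu_1$, exactly as in the paper's proof of \autoref{prop:mlemabscont}, to secure $D(\mu^\star||\mu_1)<\infty$), and you then recover optimality of cluster points from the weak-$\ast$ continuity of $\loss$. Your route has two advantages: it never uses \autoref{Fixed_point} itself (and so bypasses the lower semi-continuity argument for $D$ buried in its proof), and the telescoped bound combined with the monotonicity of \autoref{cor_dist} in fact yields a quantitative rate, $\loss(\mu_k)-\inf_{\mu\in\MM_+}\loss(\mu)\leq D(\mu^\star||\mu_1)/k$, which the paper's argument does not provide; moreover \ref{thm_opt} becomes logically independent of \ref{thm_supp}. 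What the paper's route buys in exchange is the explicit structural identity $A\bar\mu=\data$ for each cluster point, read off directly from the fixed-point equation and linear independence, rather than optimality deduced abstractly from the limiting value of $\loss$ (the two are equivalent here since $\data\in\imcone$, but the paper's form is more informative about which equations $\bar\mu$ satisfies). Both arguments rest on the same background facts asserted elsewhere in the paper, namely weak-$\ast$ compactness of the mass-one iterates and weak-$\ast$ continuity of $\loss$, so neither requires hypotheses beyond those stated.
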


\begin{proof}
  Under these assumptions,  by \autoref{smooth_bd}, there exists an absolutely continuous measure $\mu^\star$ with positive and continuous density
on $\tilde K$, such that $A \mu^\star = \data$. We may thus apply~\autoref{prop:mlemabscont} to conclude that any cluster point must be absolutely continuous with support equal to $\tilde K$, showing~\ref{thm_supp}.
  
  Letting $\bar \mu$ be a cluster point, it remains to show the optimality~\ref{thm_opt}, namely that $A \bar \mu = \data$.   
  As a cluster point, it must be fixed point of the algorithm from~\autoref{Fixed_point}, i.e. $\bar \mu = \bar \mu \left(\sum_{i=1}^{\ndet} \frac{y_i a_i}{\scl{\bar \mu}{a_i}}\right)$. Since $\operatorname{supp}(\bar \mu) = \tilde K$, we obtain  \[ \sum_{i=1}^{\ndet} \left(1-\frac{y_i}{\scl{\bar \mu}{a_i}} \right) a_i  = 0,\]
  on $\tilde K$.
   By the linear independence assumption~\eqref{lin_ind_2}, this imposes $\scl{\bar \mu}{a_i} = y_i$ for all $i \in \{1, \ldots, \ndet\}$, whence the optimality of $\bar \mu$.
   
   As for the convergence of the algorithm, we recall that the whole sequence $(l(\mu_k))$ is decreasing and hence, converges. Extracting such that a subsequence of the ML-EM iterates converges to an optimal measure, the limit of $(l(\mu_k))$ is identified to be $ \inf_{\mu \in \MM_+} l(\mu)$.


\end{proof}

Note that these results also cover the case of \autoref{smooth_int}: if $\data \in \operatorname{int}(\imcone)$ and the functions $\set{a_i}$ are linearly independent, cluster points of ML-EM are absolutely continuous, whenever the initial measure $\mu_0$ is absolutely continuous with a positive and continuous density. 

\begin{remark}
This result does not mean that cluster points do not have a singular part, such as Dirac masses. However, the fact that  cluster points have full support is relevant in practice: cropping the image obtained by ML-EM algorithm allows one to reduce the effect of the singular part and to uncover the continuous one (assuming that the measure does not have any continuous singular part). This is also what should be obtained by smoothing the final image, or by regularising the functional~$\loss$, see~\cite{OSullivan1995} for an analysis of such regularisation techniques.
\end{remark}

%

\section{Statistics} 
\label{sec_stat}

In this section, we estimate the probability that the data $\data$ stays in the image cone, i.e., $\data \in \imcone$.

Let us first go back to modelling (before any normalisation) by introducing a \emph{dose variable} $t$.
We assume that the real image is given by $\mu_r \in \MM_+$ which represents the image in the relevant unit depending on the context ($Bq$ for PET).
The dosage $t$ gives rise to independent random variables $N_i \sim \mathcal{P}(\gamma_i t)$ where $\gamma_i := \scl{\mu_r}{a_i}$, whose sum $N = \sum_{i=1}^{\ndet} N_i$ is $\mathcal{P}(\gamma t)$ with $\gamma := \sum_{i=1}^{\ndet} \gamma_i$.

The expected frequencies are given by $\data_r := (\frac{\gamma_1}{\gamma}, \ldots, \frac{\gamma_{\ndet}}{\gamma})$, which is an element of $\imcone \cap S$, where we recall that $S$, given by~\eqref{simplex}, is the simplex in $\mathbb{R}^{\ndet}$.

With our previous notations, $n_i$ and $n$ are thus realisations of the random variables $N_i$, and $N$, and now $\data =\paren[\big]{\frac{N_1}{N}, \ldots, \frac{N_{\ndet}}{N}}$ is a random variable. We emphasise that it is an estimator for $\data_r$ which depends on $t$ by using the notation $\hat{\data}_t$. When conditioned on the fact that $N = n$, we will denote $\hat{\data}_n:= (\frac{N_1}{n}, \ldots, \frac{N_{\ndet}}{n})$.

By the law of large numbers, $\hat{\data}_t$ tends to $\data_r$ almost surely as $t \to +\infty$, so we know that if $\data_r \in \operatorname{int}(\imcone)$, a high-enough dose will ensure that $\hat{\data}_t \in \operatorname{int}(\imcone)$ as well, avoiding having only sparse measures as solutions to the maximum likelihood problem.

 We now wish to give quantitative bounds for $\mathbb{P}(\hat{\data}_t \notin \imcone)$, one with a conditioning on the number of events $n$, the other without such a conditioning. The aim is to address the following questions:
\begin{itemize}
\item a posteriori, for a given number of points $n$, how small is the probability that $\hat{\data}_n \notin \imcone$?
\item a priori, how large should the dosage $t$ be for the probability that $\hat{\data}_t \notin \imcone$ to be small enough?
\end{itemize}

The celebrated Sanov's Theorem~\cite{Sanov1961} states that the empirical distribution has an exponentially small probability of being in a set which does not contain the real distribution, where the exponential is controlled by the Kullback--Leibler divergence from the real distribution to the set.
 We thus define
\[\varepsilon := \inf_{q \in S \cap \imcone^c}  d(q || \data_r),\]
which is the Kullback--Leibler divergence of $\data_r$ to the boundary of the set $\imcone$ (intersected with the simplex $S$).

In both cases, we shall give two different bounds which might be relevant in different regimes in the parameters $(\ndet,n)$ and $(\ndet,t)$, respectively. 

\begin{proposition}
The following concentration bounds hold:
  \label{prop:nbound}
  \begin{equation}
    \mathbb{P}(\hat{\data}_n \notin \imcone) \leq \left\{
      \begin{array}{ll}
        (n+1)^{\ndet} \, e^{-n \varepsilon}, \\
        2 \ndet  \, e^{- n \varepsilon/\ndet}.
      \end{array}
    \right.
  \end{equation}
\end{proposition}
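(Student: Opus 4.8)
The plan is to apply Sanov's theorem in its two standard finite-sample forms, controlling the conditional probability $\mathbb{P}(\hat{\data}_n \notin \imcone)$ where $\hat{\data}_n = (N_1/n, \dotsc, N_\ndet/n)$ is the empirical distribution of $n$ i.i.d.\ samples. Conditioned on $N = n$, the thinning structure of the Poisson process tells us that the individual points are drawn i.i.d.\ from the discrete distribution $\data_r$ on $\{1,\dotsc,\ndet\}$, so $\hat{\data}_n$ is precisely the type (empirical distribution) of $n$ such samples. The event $\{\hat{\data}_n \notin \imcone\}$ is the event that this empirical type lands in the closed set $\imcone^c \cap S$, which does \emph{not} contain the true distribution $\data_r$ (since $\data_r \in \operatorname{int}(\imcone) \cap S$, or at least $\data_r \in \imcone$). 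The exponential rate governing the decay is exactly $\varepsilon = \inf_{q \in S \cap \imcone^c} d(q \| \data_r)$, as defined just above the statement.

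For the first bound, I would invoke the method of types. The number of distinct types of sequences of length $n$ over an alphabet of size $\ndet$ is at most $(n+1)^\ndet$, and each type $q$ has probability under the product measure $\data_r^{\otimes n}$ bounded by $e^{-n\, d(q \| \data_r)}$. Summing over all types that fall in $\imcone^c \cap S$ gives
\begin{equation*}
  \mathbb{P}(\hat{\data}_n \notin \imcone) \leq \sum_{q \in \imcone^c \cap S} e^{-n\, d(q \| \data_r)} \leq (n+1)^\ndet \, e^{-n \varepsilon},
\end{equation*}
using that each summand is bounded by $e^{-n\varepsilon}$ and that there are at most $(n+1)^\ndet$ terms. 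This is the classical method-of-types upper bound and is sharpest when $n$ is large relative to $\ndet$.

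For the second bound, the polynomial prefactor $(n+1)^\ndet$ is too crude when $\ndet$ is large, so I would instead split the event $\imcone^c \cap S$ into a union controlled componentwise. Since $\data_r \in \imcone$ has a characterisation via the dual cone, membership $q \in \imcone^c$ can be detected by violation of one of the defining half-space constraints, reducing the bad event to a union of at most $\ndet$ (or $2\ndet$, accounting for two-sided deviations) one-dimensional large-deviation events. Each such scalar event is a deviation of a single empirical coordinate $N_i/n$ from its mean, for which a Chernoff/Hoeffding-type bound yields a factor $e^{-n\varepsilon/\ndet}$ after distributing the total divergence budget $\varepsilon$ across the $\ndet$ coordinates; a union bound over the (at most) $2\ndet$ such events then gives $2\ndet\, e^{-n\varepsilon/\ndet}$. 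The main obstacle here is getting the constants and the per-coordinate rate correct: one must verify that the geometry of the cone $\imcone$ (specifically its dual-cone characterisation $\imcone^\ast = \{\lambda : A^\ast \lambda \geq 0\}$) lets us reduce the multidimensional divergence constraint to scalar deviations without losing more than a factor $\ndet$ in the exponent. I expect this dimension-reduction step, rather than the application of Sanov itself, to be the delicate part of the argument.
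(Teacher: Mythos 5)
Your proof of the first bound is correct and is essentially the paper's: conditioned on $N=n$ the counts are multinomial with parameters $(n,\data_r)$, and the bound $(n+1)^{\ndet}e^{-n\varepsilon}$ is exactly the finite-sample Sanov inequality, which the paper invokes by citation and you re-derive by the method of types (at most $(n+1)^{\ndet}$ types, each type class $q$ having probability at most $e^{-n\,d(q||\data_r)}$). No issue there.

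The second bound is where there is a genuine gap, and it sits precisely at the step you yourself flag as delicate. Your plan is to detect $\hat{\data}_n\notin\imcone$ through the violation of ``one of the defining half-space constraints'' of $\imcone$ and then union-bound over at most $2\ndet$ scalar events. But $\imcone=A(\MM_+)$ is the conic hull of the continuum of vectors $\{(a_1(x),\dots,a_{\ndet}(x)):x\in K\}$; it is not a polyhedral cone with $O(\ndet)$ facets, and its dual cone $\imcone^*=\{\lambda : A^*\lambda\geq 0 \text{ on } K\}$ is cut out by one linear constraint per point $x\in K$. Hence $q\notin\imcone$ only tells you that $\langle\lambda,q\rangle<0$ for \emph{some} $\lambda\in\imcone^*$ --- a union over infinitely many directions with no natural correspondence to the $\ndet$ coordinates --- so the proposed reduction cannot be carried out; moreover, even for a fixed direction $\lambda$, the exponent a Chernoff bound would produce is $\inf\{d(q||\data_r):\langle\lambda,q\rangle<0\}$, which you would still need to relate to $\varepsilon/\ndet$. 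The paper sidesteps all of this by citing Lemma~6 of Mardia et al.; the argument behind that lemma ignores the geometry of $\imcone$ entirely and reduces through the divergence itself: by the definition of $\varepsilon$, the event $\{\hat{\data}_n\notin\imcone\}$ is contained in $\{d(\hat{\data}_n||\data_r)\geq\varepsilon\}$; next, the elementary inequality $d(q||p)\leq\sum_{i=1}^{\ndet}d_2(q_i||p_i)$, where $d_2(u||v):=u\log\frac{u}{v}+(1-u)\log\frac{1-u}{1-v}$ is the Bernoulli divergence (the difference of the two sides equals $(\ndet-1)$ times the divergence between the renormalised complementary vectors $\bigl(\frac{1-q_i}{\ndet-1}\bigr)_i$ and $\bigl(\frac{1-p_i}{\ndet-1}\bigr)_i$, hence is nonnegative), shows that on this event some coordinate satisfies $d_2\bigl((\hat{\data}_n)_i||(\data_r)_i\bigr)\geq\varepsilon/\ndet$; finally, each $(\hat{\data}_n)_i$ is an empirical binomial frequency, so the two-sided Chernoff bound gives $\mathbb{P}\bigl(d_2((\hat{\data}_n)_i||(\data_r)_i)\geq\varepsilon/\ndet\bigr)\leq 2e^{-n\varepsilon/\ndet}$, and the union bound over $i$ yields $2\ndet\,e^{-n\varepsilon/\ndet}$. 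So your instinct about the shape of the bound (union over coordinates, budget $\varepsilon/\ndet$ per coordinate, a factor $2$ from two-sidedness) is right, but the dimension reduction must go through the sublevel set of the divergence, not through dual-cone constraints.
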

\begin{proof}
Conditioned on $N=n$, the random vector $\hat{\data}_n$ follows the multinomial distribution of parameters $n$ and $\data_r$. The first inequality is then nothing but a direct application of Sanov's Theorem~\cite{Sanov1961}. The second is more recent and given in Lemma~6 of~\cite{Mardia2018}.
\end{proof}

We now proceed to the case with dose $t$:

\begin{theorem}
\label{exp_dec}
The following concentration bounds hold:
\[
\mathbb{P}(\hat{\data}_t \notin \imcone) \leq \left\{
    \begin{array}{ll}
        C(\ndet) (1 + (\gamma t)^{\ndet}) \, e^{-\gamma t \varepsilon}, \\
        2 \ndet  \, e^{- \gamma t  \varepsilon/\ndet},
    \end{array}
\right.
\]
where $C(\ndet)$ is a combinatorial constant which depends only on $\ndet$ and satisfies 
$C(\ndet) \leq \big(\frac{a (\ndet+1)}{\log(\ndet+2)}\big)^{\ndet+1}$, with $a=0.792$.
\end{theorem}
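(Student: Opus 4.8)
The plan is to remove the conditioning on the total number of counts $N$ by expressing $\mathbb{P}(\hat{\data}_t \notin \imcone)$ as an average of the conditional probabilities $\mathbb{P}(\hat{\data}_n \notin \imcone)$ against the law of $N \sim \mathcal{P}(\gamma t)$, and then invoking \autoref{prop:nbound}. First I would record the reduction: since $\imcone$ is a cone and $\hat{\data}_t = \frac{1}{N}(N_1, \dots, N_{\ndet})$, membership $\hat{\data}_t \in \imcone$ depends only on the direction of $(N_1, \dots, N_{\ndet})$, so the event $\{\hat{\data}_t \notin \imcone\}$ coincides with $\{(N_1,\dots,N_{\ndet}) \notin \imcone\}$ (the degenerate event $N=0$ gives the zero vector, which lies in the cone, and is harmless). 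Conditioned on $N=n$, the vector $(N_1,\dots,N_{\ndet})$ is multinomial with parameters $n$ and $\data_r$, so the conditional law of $\hat{\data}_t$ is exactly that of $\hat{\data}_n$. Hence
\[
\mathbb{P}(\hat{\data}_t \notin \imcone) = \sum_{n \geq 1} \mathbb{P}(\hat{\data}_n \notin \imcone)\, \mathbb{P}(N=n),
\]
and it remains to insert each of the two conditional estimates of \autoref{prop:nbound} and sum the resulting series against the Poisson weights $\mathbb{P}(N=n) = e^{-\gamma t}(\gamma t)^n / n!$.

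For the second (cruder) bound this is immediate: substituting $\mathbb{P}(\hat{\data}_n \notin \imcone) \leq 2\ndet\, e^{-n\varepsilon/\ndet}$ turns the series into $2\ndet\, \E[e^{-N\varepsilon/\ndet}]$, and the Poisson moment generating function $\E[e^{sN}] = e^{\gamma t (e^s - 1)}$ evaluated at $s = -\varepsilon/\ndet$ supplies the factor $e^{\gamma t(e^{-\varepsilon/\ndet}-1)}$, giving the stated exponential decay in $\gamma t$. For the first bound I would substitute $\mathbb{P}(\hat{\data}_n \notin \imcone) \leq (n+1)^{\ndet} e^{-n\varepsilon}$ and face $e^{-\gamma t}\sum_{n \geq 0}(n+1)^{\ndet}\theta^n/n!$ with $\theta := \gamma t\, e^{-\varepsilon}$. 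The natural tool is the identity $\sum_{n\geq 0} n^k \theta^n / n! = e^{\theta} T_k(\theta)$, where $T_k$ is the Touchard (Bell) polynomial whose coefficients are the Stirling numbers of the second kind, so that $T_k(1) = B_k$, the $k$-th Bell number. Expanding $(n+1)^{\ndet} = \sum_{k}\binom{\ndet}{k} n^k$ and using the Bell recurrence $\sum_{k}\binom{\ndet}{k}B_k = B_{\ndet+1}$ identifies the combinatorial constant as $C(\ndet) = B_{\ndet+1}$; bounding the resulting degree-$\ndet$ polynomial with non-negative coefficients by $(1 + (\gamma t)^{\ndet})$ times its value at $\theta = 1$ produces the $(1 + (\gamma t)^{\ndet})$ prefactor, while the remaining scalar factor $e^{-\gamma t}e^{\theta} = e^{\gamma t(e^{-\varepsilon}-1)}$ supplies the exponential decay. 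The explicit estimate on $C(\ndet)$ is then precisely the Berend--Tassa upper bound on Bell numbers, $B_{\ndet+1} \leq (0.792\,(\ndet+1)/\log(\ndet+2))^{\ndet+1}$.

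The two generating-function evaluations are routine; the genuine work — and the step I expect to be the main obstacle — is the bookkeeping in the first bound: cleanly splitting the power $(n+1)^{\ndet}$ into a part that recombines (via the Touchard identity and the Bell recurrence) into the single constant $B_{\ndet+1}$ and a part carrying the $\theta^{\ndet}$ growth, all while keeping the coefficients non-negative so the polynomial can be dominated by $(1+(\gamma t)^{\ndet})$ without degrading the constant. A secondary point requiring care is the reduction itself: justifying that cone membership is insensitive to the normalisation and that the $N=0$ contribution can be discarded, so that \autoref{prop:nbound} genuinely applies term by term.
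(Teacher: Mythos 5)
Your proposal is correct to exactly the same extent as the paper's own proof, and it follows the same strategy: condition on $N$, insert the two bounds of \autoref{prop:nbound}, and evaluate the resulting Poisson expectations, the second via the moment generating function and the first via Touchard polynomials, Bell numbers, and the Berend bound on $B_{\ndet+1}$. The only divergence is in the combinatorial bookkeeping, and it is cosmetic: you expand $(n+1)^{\ndet}=\sum_{k}\binom{\ndet}{k}n^{k}$ and invoke the Bell recurrence $B_{\ndet+1}=\sum_{k}\binom{\ndet}{k}B_{k}$, whereas the paper integrates $\varphi_{\ndet}(x)=\sum_{n\geq 0}(n+1)^{\ndet}x^{n}/n!$ to recognise $T_{\ndet}(x)e^{x}$ and then uses $\varphi_{\ndet}(x)=(T_{\ndet}'(x)+T_{\ndet}(x))e^{x}=T_{\ndet+1}(x)e^{x}/x$. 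These are two derivations of the same identity; both land on $C(\ndet)=B_{\ndet+1}$, the same prefactor $(1+(\gamma t)^{\ndet})$ via the same crude bound on polynomials with non-negative coefficients, and the same Berend--Tassa estimate. Your care about the reduction ($N=0$, cone invariance under scaling) is slightly more explicit than the paper's one-line conditioning identity, but it is the same step.

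One caveat, which applies verbatim to the paper's proof as well: the final conversion of $e^{\gamma t(e^{-\varepsilon/\ndet}-1)}$ into $e^{-\gamma t \varepsilon/\ndet}$ (and of $e^{\gamma t(e^{-\varepsilon}-1)}$ into $e^{-\gamma t\varepsilon}$) requires $1-e^{-u}\geq u$, but the elementary inequality goes the other way: $1-e^{-u}\leq u$ for $u\geq 0$, since $e^{-u}\geq 1-u$. What the computation genuinely delivers is the exponent $\gamma t\,(1-e^{-\varepsilon/\ndet})$ (respectively $\gamma t\,(1-e^{-\varepsilon})$), which is weaker than the stated one; your phrase ``giving the stated exponential decay'' silently crosses the same bridge that the paper crosses with its incorrect invocation of $1-e^{-u}\geq u$. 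So relative to the paper your proposal has no gap; in absolute terms, both arguments prove the theorem only with $\varepsilon$ replaced by $\ndet(1-e^{-\varepsilon/\ndet})$ in the second bound and by $1-e^{-\varepsilon}$ in the first, and Jensen's inequality (the map $n\mapsto e^{-n\varepsilon/\ndet}$ being convex) shows the loss is unavoidable along this route: $\E[e^{-N\varepsilon/\ndet}]\geq e^{-\gamma t\varepsilon/\ndet}$, so no sharper handling of the same expectation can recover the stated exponent.
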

\begin{proof}
We may write 

\[\mathbb{P}(\hat{\data}_t \notin \imcone)  =  \mathbb{E}(\mathbb{P}(\hat{\data}_N \notin \imcone) | N) \leq \mathbb{E}(g(N))\] where $g(n) = (n+1)^{\ndet} \, e^{-n \varepsilon}$ or $2 \ndet  \, e^{- n \varepsilon/\ndet}$ from the previous proposition.
It is now a matter of estimating this expectation with $N ~\sim \mathcal{P}(\gamma t)$. In the second case,  
\begin{align*}
\mathbb{E}(g(N)) & = 2\ndet \, e^{- \gamma t} \sum_{n=0}^{+\infty} e^{-n \varepsilon/\ndet}  \frac{(\gamma t)^n}{n!}  \\
& = 2\ndet \, e^{- \gamma t(1- \exp(-\varepsilon/\ndet))} \leq 2 \ndet \, e^{-\gamma t \varepsilon/\ndet},
\end{align*}
from $1- e^{-u} \geq u$.

In the first case, $\mathbb{E}(g(N)) = e^{- \gamma t} \varphi_{\ndet}(\gamma t e^{-\varepsilon}),$
with \[ \varphi_{\ndet}(x):=\sum_{n=0}^{+\infty} (n+1)^{\ndet} \, \frac{x^n}{n!},\] and $x:= \gamma t e^{-\varepsilon}$, which we now estimate.
We may integrate to find \[\int_0^x \varphi_{\ndet}(u) \, \dd u = \sum_{n=1}^{+\infty} n^{\ndet} \frac{x^n}{n!} =: T_{\ndet}(x) e^{x},\] where $T_{\ndet}$ is the so-called Touchard polynomial of order $\ndet$, which has degree $\ndet$. 

This allows to go back to $\varphi_{\ndet}(x)$ as $\varphi_{\ndet}(x) = \left(T_{\ndet}'(x) + T_{\ndet}(x)\right) e^{x} = \frac{T_{\ndet+1}(x)}{x} e^{x}$, using a well-known property of Touchard polynomials. The Touchard polynomial of order $\ndet$ has integer coefficients (the Stirling numbers), whose sum is given by the so-called Bell number~$B_{\ndet}$. 

Using the crude bound $P(x) = \sum_{k=0}^{\ndet} a_k x^k \leq \big(\sum_{k=0}^{\ndet} a_k\big) (1+x^{\ndet})$ valid for all $x \geq 0$ when $P$ is a polynomial with non-negative coefficients, we may write $\frac{T_{\ndet+1}(x)}{x} \leq B_{\ndet+1}(1+x^{\ndet})$ for $x \geq 0$.

Summing up, we have \begin{align*}
\mathbb{E}(g(N)) & \leq  B_{\ndet+1}e^{- \gamma t}(1+x^{\ndet}) e^x  = B_{\ndet+1} (1+(\gamma t e^{-\varepsilon})^{\ndet}) e^{- \gamma t(1- \exp(-\varepsilon))} \\
& \leq C(\ndet) (1 + (\gamma t)^{\ndet}) \, e^{-\gamma t \varepsilon},
\end{align*}
where $C(\ndet) := B_{\ndet+1}$.
The bound about Bell numbers such as $C(\ndet)$, stated in the proposition, can be found in~\cite{Berend2010}.
 We use them here as bounds on the moments of a Poisson random variable. 
\end{proof}

\begin{remark}
Although the bound coming from Sanov's theorem is sharper at the limit $n \to +\infty$ or $t \to +\infty$, it may be that the other one is relevant for realistic values $\ndet$, $n$ or $t$. If these bounds are taken as functions of $\varepsilon$, it can be checked that the alternative bound becomes more stringent in the regime where $\varepsilon \ll \frac{\ndet}{n} \log(n)$.
\end{remark}

\section{Numerical simulations}
\label{sec:numerics}

We perform simulations using the Python library Operator Discretization Library~\cite{Adler2017}.
The interested reader may run our numerical experiments using a Jupyter Notebook~\cite{notebook}.

All simulations are run with a 2D PET operator $A$ having \num{90} views and \num{128} tangential positions, leading to a number of (pairs of) detectors $\ndet = 11520$.
The image resolution is $256 \times 256$. 
We draw $\bar{\data}_t  \sim \frac{1}{t} \mathcal{P}(t A \mu_r)$ for different doses $t$, so that the higher $t$, the lower the noise level.
We then normalise $(y_t)_i \coloneqq (\bar{y}_t)_i/ \sum_i (\bar{y}_t)_i$ for $i =1, \ldots, \ndet$.

\begin{figure}
  \centering
  \includegraphics[width=\textwidth]{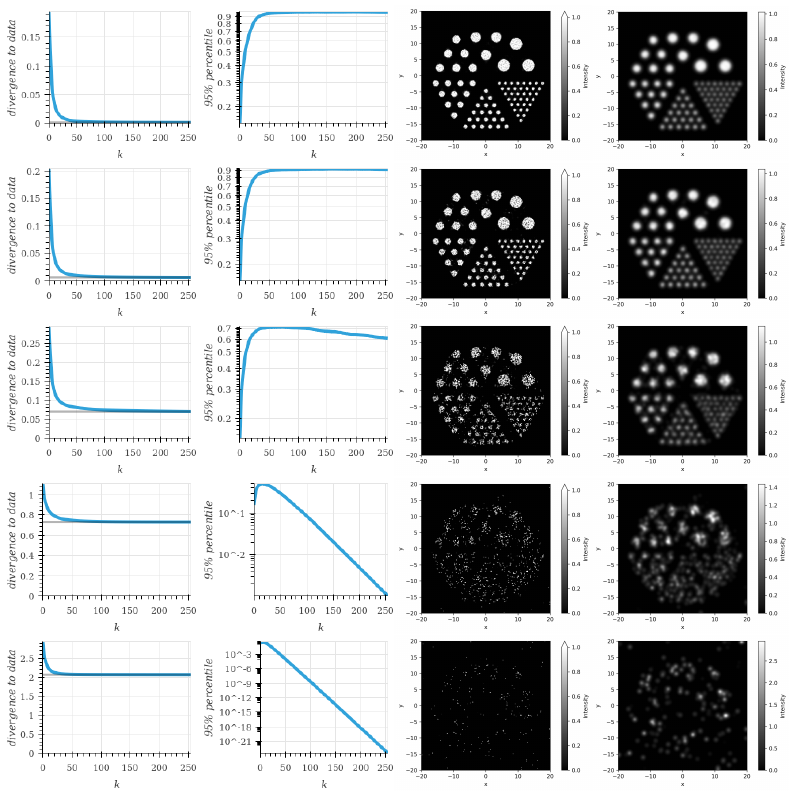}
  \caption{
    We show here various reconstructions for a decreasing amount of dose (the first row has $t = 10^{2}$ and each subsequent row has ten times less dose than the previous one).
    The columns depict
    (a) the divergence to the data $d(y_t || A \mu^k)$
    (b) the \SI{95}{\percent} percentile (logarithmic scale)
    (c) the reconstruction with limitations between zero and one
    (d) a smoothed reconstruction (three pixel wide Gaussian convolution).\\
    It is apparent that (i) when there is too much noise, the divergence to the data does not converge to zero (ii) the percentile in the second column shows that when the noise is large enough, MLEM iterations quickly increase the sparsity.
  }
  \label{fig:noises}
\end{figure}

In \autoref{fig:noises}, we consider five different noise levels, associated to different values of dose $t$.
For each of these values, we are interested in seeing whether iterations lead to sparse measures or not.
From our results, this is equivalent to testing if $\data_t \in \imcone$.
A first crude estimate of this problem is to plot $d(\data_t||A \mu_k)$ and check whether this quantity converges to zero, which by theory implies $\data_t \in \imcone$. 
We also look at the \SI{95}{\percent} evolution of the percentile along the iterations.
A low percentile means that the mass concentrates on the remaining five percent of the image.

\subsection*{Dual certificates.}

It is difficult to make sure that the divergence to the data $\data$ converges to zero.
We thus also look for \emph{dual certificates} $\lambda$ in the dual cone $\imcone^*$ (see \autoref{sec:coneadjoint}) such that the dual function $g$ defined in~\eqref{dual_fct} fulfils $g(\lambda) > 0$.
Indeed, weak duality ensures that $\operatorname{min} d(\data_t|| A \mu) \geq g(\lambda)$, so the existence of any dual certificate proves that $\data_t \notin \imcone$.

In order to find a good choice for the certificate $\lambda$, we compute $\lambda_k = 1 -\frac{\data_t}{A \mu_k}$ along iterates,
which we know should converge to the optimal dual variable $\lambda^\star$ in the case where $\data_t \in \imcone$, and we conjecture this is true in full generality. Recall that the dual optimal variable is such that $A^*\lambda^\star \geq 0$, i.e., $\lambda^\star \in \imcone^*$.
For a fixed number of iterations $k$, there is no reason that $\lambda_k\in \imcone^*$, so we add a small appropriate constant $c$ to $\lambda_k$ and check that it provides a dual certificate, that is, we check whether $g(\lambda_k + c) > 0$. 

This procedure allows us to certify that in the three noisiest cases of \autoref{fig:noises}, the data $y_t$ is not in the cone $\imcone$.
We thus expect sparsity in those three cases.
We note that the resulting image after \num{400} iterates is not completely sparse: more iterates are required for only the Dirac masses to remain. 
We expect the convergence to the sum of Dirac masses to be very slow. 

In the two less noisy cases, we could not certify that  the data $\data_t$ is not in the cone $\imcone$, although that does not mean that the converse should be true.
In fact, it could well be that even for relatively low levels of noise, sparsity is the outcome but Dirac masses only take over after an practically unrealistic number of iterates.


\section{Open problems and perspectives} 

\subsection*{Convergence of iterates.} 
As shown in \autoref{thm_smooth}, cluster points of the~weak-$\ast$ cluster points of ML-EM are optimal when $y \in \imcone$.
A problem left open is their optimality when $y \notin \imcone$.
That result would imply a strong sparsity result for ML-EM cluster points.

Another problem is the convergence of the whole sequence to a single point, which is a tall order since there are in general many optimal points.
The discrete equivalent to~\autoref{decrease} allows to prove the full convergence of iterates in the discrete case, owing to the continuity of the discrete Kullback--Leibler divergence~\cite{Vardi1985} at cluster points.
In continuum, the fact that the divergence $D(\mu ||\nu)$ may be infinite even $\mu$ is absolutely continuous with respect to $\nu$ does not allow us to obtain a similar result, although we conjecture it does hold true.

\subsection*{Regularisation.}

Another interesting issue in light of our sparsity results is that of regularisation.
How should one choose appropriate additional regularisation terms to alleviate the problem? Similarly and as is done in~\cite{Resmerita2007} for continuous data, analysing the alternative strategy of regularising by early stopping is worthy of interest, since this is usual practice in PET. 

\subsection*{Going further in the case of PET}
For PET, the functions $a_i$ are actually close to being singular measures concentrated on a line. Studying the effect of this near-singularity on our results is of practical interest. More precisely, one could for specific geometries analyse the typical minimum set of a function of the form $A^* \lambda = \sum_{i=1}^{\ndet} \lambda_i a_i$.

It would also be natural to look at the effect of binning (i.e., aggregating detectors) on the constant $\inf_{q \in S \cap \imcone^c}  d(q,\data_r)$. Indeed,~\autoref{exp_dec} shows its importance when it comes to sparsity, justifying to try and make this distance as large as possible. 

\subsection*{Sparsity results in general.}
We intend to investigate the generality of these sparsity results in the context of other divergences. The squared-distance is a popular one, but generalisations have recently been advocated for in the literature, such as the $\beta$-divergences~\cite{Cavalcanti2019}. Finally, we believe our results can be extended without too much difficulty to the generalised statistical model for PET where scatter and random events are taking into account, namely
$y = \mathcal{P}(A \mu + s)$ with $s$ a known vector standing for the counts of scatter and random events.

\label{sec_conc}

\section*{Acknowledgements}
We are grateful to Sebastian Banert for fruitful discussions about optimisation in Banach spaces, as well as Axel Ringh and Johan Karlsson for bringing the moment matching problem to our attention. We acknowledge support from the Swedish Foundation of Strategic Research grant AM13-004.

%

\small
\bibliography{biblio.bib}
\bibliographystyle{acm}
\end{document}